\algnewcommand\algorithmicreturn{\textbf{return}}
\algnewcommand\RETURN{\State \algorithmicreturn}%
\pgfplotsset{
  tick label style={font=\footnotesize},
  label style={font=\footnotesize},
  legend style={font=\footnotesize}
}
\newcommand{\rsf}{\mathsf{r}}
\newcommand{\ksf}{\mathsf{k}}
\newcommand{\ran}{\mathrm{ran}}
\newcommand{\ransf}{\mathrm{ran}}
\newcommand{\kersf}{\mathrm{ker}}
\newcommand{\intsf}{\mathrm{int}}
\newcommand{\clsf}{\mathrm{cl}}
\newcommand{\slevsf}{\mathrm{slev}}
\newcommand{\Tsf}{\mathsf{T}}
\newcommand{\R}{\mathbb{R}}
\newcommand{\cR}{\mathcal{R}}
\newcommand{\cN}{\mathcal{N}}
\newcommand{\cT}{\mathcal{T}}
\newcommand{\cP}{\mathcal{P}}
\newcommand{\cD}{\mathcal{D}}
\newcommand{\cB}{\mathcal{B}}
\newcommand{\risf}{\mathrm{ri}}
\newcommand{\domsf}{\mathrm{dom}}
\newcommand{\be}{\begin{equation}}
\newcommand{\ee}{\end{equation}}
\DeclareMathOperator{\dom}{dom}         
\DeclareMathOperator{\Arg}{Arg}         
\newtheorem{example}{Example}
\newtheorem{remark}{Remark}
\newtheorem{fact}{Fact}
\newcommand{\infimal}{\triangleright}
\newcommand{\mcup}{\mathbin{\raisebox{-0.20ex} {\scalebox{1.3}{$\cup$}}}} 
\newcommand{\mcap}{\mathbin{\raisebox{-0.05ex} {\scalebox{1.3}{$\cap$}}}}
\newif\ifcompilePGFfigs
\newcommand{\tabincell}[2]{\begin{tabular}{@{}#1@{}}#2\end{tabular}}
\title{Subspace decomposition in regularized least-squares: solution properties, restricted coercivity and beyond
}
\author{Feng Xue\thanks{National key laboratory, Beijing,  China (\tt{fxue@link.cuhk.edu.hk}).}
\and Hui Zhang\textsuperscript{\Envelope}\thanks{Corresponding author. Department of Mathematics, National University of Defense Technology, Changsha, Hunan 410073, China (\tt{h.zhang1984@163.com}).} } 
\date{\today}
\begin{document}

\maketitle

\begin{abstract}
We investigate the solution properties of the regularized least-squares problem. Using a subspace decomposition technique, we derive expressions for the solution set in terms of the conjugate function, from which various properties, including existence, compactness and uniqueness, can then be easily analyzed. A key distinction of our approach from existing works is the separate treatment of existence and compactness. We unify many existing results based on recession cones and sublevel sets, and link them to our findings by connecting the recession function with the recession cone of the subdifferential of the conjugate function. In particular, the concept of restricted coercivity is developed and discussed in various aspects. The associated linearly constrained counterpart is discussed in a similar manner. Its connections to regularized least-squares are further established via the exactness of infimal postcomposition. Our results are supported by  numerous examples, among which the geometric interpretation of the lasso solution deserves further investigations in near future.
\end{abstract}

\begin{keywords}
Regularized least-squares, solution existence, uniqueness, restricted coercivity, recession function
\end{keywords}

\begin{AMS}
 47H05,  49M29, 49M27, 90C25
 \end{AMS}


\section{Introduction}
\subsection{Regularized least squares}
The regularized least-squares problem plays a central role in many application fields such as signal processing, machine learning and statistical inference. It is formulated as \cite{vaiter_thesis,lasso_reload}
\be \label{p1} 
\min_{x\in \R^n} f(x) +\frac{1}{2} \big\| Ax-b \big\|^2,
\ee
where $f:\R^n\rightarrow \R \cup \{+\infty\}$ is a proper, lower semi-continuous (l.s.c.) and convex regularization functions, $A: \R^n \mapsto \R^m$ is a given matrix, $b\in\R^m$ is a given vector. Its associated equality-constrained optimization is given by
\be  \label{p2}
\min_{x\in\R^n} f(x),\ \ \textrm{s.t.\ } Ax=b.
\ee

Numerous works have been devoted to devising and analyzing (first-order) optimization methods for solving \eqref{p1} and \eqref{p2}; see, for example, the comprehensive accounts in \cite{beck_book,ywt_book,glo_book} and more recent overviews in \cite{condat_tour,plc_review_2024}. In this paper, we concentrate on the solution properties of these two problems, rather than on solving algorithms. For \eqref{p1}, the existence and compactness of the solution set have been studied in \cite{teboulle_book,vaiter_thesis} using concepts of coercivity and recession functions. Recent works such as \cite{fadili_cone,he_lasso} have provided dual/conjugate perspectives on the solution properties of \eqref{p1}.
The problem \eqref{p2} has been extensively examined in \cite{fadili_iiima,fadili_cone,venkat_geometry,ame_edge} across various contexts, employing tools such as typical cones and geometric/statistical measures. Connections between \eqref{p1} and \eqref{p2} were established in \cite{fadili_iiima}, where the authors also discussed the strongness and sharpness of the minima.

Regarding the specific case where $f$ is a partly smooth function \cite{lewis}, the solution uniqueness has been thoroughly studied in \cite{vaiter_iiima,fadili_iiima,vaiter_tit_2018}, relying heavily on the concepts of model subspaces and subdifferential decomposability. This class includes many commonly used (sparsity-inducing) regularizers, such as the $\ell_1$-norm. Solution properties of the lasso were analyzed in \cite{lasso_reload,gilbert,zh_lasso,vaiter_tit_2013}, and later extended to the generalized lasso in \cite{vaiter_tit,tib_2012}.
More recently, \cite{he_lasso} investigated the solution uniqueness for gauge functions and examined the lasso solution from a Lagrangian dual perspective. Connections between the solutions of the lasso and basis pursuit problems were discussed in \cite{fadili_iiima} as special cases.

\subsection{Contributions}
Based on a subspace decomposition with respect to $A$---namely, the range space of $A^\top$ and the kernel space of $A$---we establish a characterization of the solution set for the regularized least-squares problem \eqref{p1}, which depends on the subdifferential of the conjugate of $f$. Explicit expressions of the solution set have not been proposed in related works. Previous studies primarily focused on solution existence and uniqueness based on functional properties, without explicitly constructing the solution set. In contrast, our derived solution set provides a straightforward approach to analyze existence, compactness, and uniqueness of solutions. Consequently, our sufficient (and necessary) conditions for the solution properties are also formulated in terms of the subdifferential of the conjugate of $f$. 

Most existing results on solution existence and compactness in \cite{fadili_iiima,fadili_cone,vaiter_thesis} rely on concepts such as recession and coercivity of functions. We unify these terminologies and bridge existing works with our approach by linking the subdifferential of the conjugate of $f$ to the recession function. The restricted coercivity is particularly discussed in various aspects.  For solution uniqueness, we also consolidate several typical cones used in prior studies, such as the tangent cone, radial cone, and recession cone. Our results are shown to be equivalent to \cite[Theorem 3.1]{fadili_cone}, yet our condition is more direct and simpler. Furthermore, our framework encompasses \cite[Theorem 3.8]{he_lasso} as a special case when $f$ is a gauge function, and \cite[Proposition 3.2]{esaim} when $f$ is the $\ell_1$-norm. Note that existing works, such as \cite{teboulle_book,vaiter_thesis}, often treat existence and compactness of the solution set together. If the condition fails, one cannot determine whether the solution set is empty or unbounded. Therefore, existence and compactness are addressed separately in this work.

The equality-constrained problem \eqref{p2} is investigated following an approach very similar to that used for \eqref{p1}. The subspace decomposition also provides a convenient way to derive the solution set, from which we further analyze solution properties. Connections to \eqref{p1} are established through the concept of (exact) infimal postcomposition, extending the results in \cite{fadili_iiima}. Finally, various examples for both problems are provided to illustrate and validate our results, as well as to highlight the limitations of existing ones.

\subsection{Notations and assumptions}
We use standard notations and geometric concepts from convex analysis and variational analysis, which can all be found in the classical and recent monographs \cite{rtr_book,rtr_book_2,urruty,plc_book,teboulle_book}.  

We reserve a number of notations as following. The solution sets of \eqref{p1} and \eqref{p2} are denoted by $X$, and a particular solution is denoted by $x^\star \in X$. The associated residual for \eqref{p1} is denoted as $r:=b-Ax^\star$.  $\iota_C$ denotes an indicator function of the set $C$.  $\cP_C$ stands for an orthogonal projection of a point onto the set $C$. $\ransf A^\top$ and $\kersf A$ denote the range (column) space of $A^\top$ and kernel space of $A$, respectively. The subspace decomposition in this paper is always referred to as the above two orthogonal subspaces, formally written as $x=\cP_{\ransf A^\top} x
+ \cP_{\kersf A} x :=x_\rsf +x_\ksf$. $I_n$ denotes the $n\times n$ identity matrix.  For a (not necessarily maximal) monotone operator $\cB:\R^n \mapsto 2^{\R^n}$, $J_\cB$ denotes a resolvent of $\cB$, which is defined as $J_\cB =(I_n +\cB)^{-1}$; the parallel composition of $\cB$ by $A$ is defined as $A\infimal \cB = (A\circ \cB^{-1} \circ A^\top)^{-1}$ \cite[Definition 25.39]{plc_book}.

\section{Regularized least-squares} \label{sec_rls}
\subsection{Preliminary results}
We first derive the solution set of \eqref{p1} (assumed non-empty for the moment), from which then immediately follow the solution existence and other properties. This idea is fundamentally different from the existing works, which often directly investigated the conditions for solution existence and compactness, without developing the solution set. 

The following results will be useful for deriving the solution set of \eqref{p1}.
\begin{lemma} \label{l_X_p1}
Given the problem \eqref{p1}, $ J_{A\infimal \partial f} $ satisfies the  following.

{\rm (i)} $\ransf J_{A\infimal \partial f} = \ransf ( I_m + A \infimal  \partial f )^{-1} = \domsf
( I_m + A \infimal  \partial f ) =\domsf (A\infimal \partial f)  
=\ransf (A \circ \partial f^* \circ A^\top)   \subseteq \ransf A$. 

{\rm (ii)} $ J_{A\infimal \partial f} = \cP_{\ransf A} \circ  J_{A\infimal \partial f} $, i.e.,  $( I_m + A \infimal  \partial f )^{-1} = 
\cP_{\ransf A} \circ ( I_m + A \infimal  \partial f )^{-1} $.

{\rm (iii)} $J_{A\infimal \partial f} = ( I_m + A \infimal  \partial f )^{-1}$ is at most single-valued.

{\rm (iv)} $ J_{A\infimal \partial f} =  J_{A\infimal \partial f}  \circ \cP_{\ransf A}$, i.e.,  $( I_m + A \infimal  \partial f )^{-1} = 
( I_m + A \infimal  \partial f )^{-1} \circ \cP_{\ransf A}$.

{\rm (v)} $ J_{A\infimal \partial f} =\cP_{\ransf A} \circ   J_{A\infimal \partial f}  \circ \cP_{\ransf A}$.
\end{lemma}
\begin{proof}
(i) Clear by definitions of resolvent and $A\infimal \partial f$.

(ii) immediately follows from (i).

(iii) If $b\notin \ransf ( I_m + A \infimal  \partial f)$, we have $( I_m + A \infimal  \partial f )^{-1} (b) =\varnothing$. If  $b\in \ransf ( I_m + A \infimal  \partial f)$, the uniqueness of $( I_m + A \infimal  \partial f)^{-1} (b)$ can be shown  by the monotonicity of $A\infimal \partial f$. Indeed, let $\{a_1, a_2\}\subseteq ( I_m + A \infimal  \partial f)^{-1} (b)$,  then, $b\in a_1+ (A\infimal \partial f) (a_1)$, and $b\in a_2+ (A\infimal \partial f) (a_2)$. Subtracting yields $-(a_1-a_2) \in (A\infimal \partial f)(a_1) - (A\infimal \partial f)(a_2)$, which further leads to
\[
0\le \|a_1-a_2\|^2 = \langle a_1-a_2| a_1-a_2\rangle  \in 
- \big\langle (A\infimal \partial f)(a_1) - (A\infimal \partial f)(a_2)
\big| a_1-a_2 \big\rangle \le 0,
\]
which is $a_1=a_2$. 

(iv) By (iii),  $\forall b\in \ransf ( I_m + A \infimal  \partial f)$,  we deduce that $a=(I_m + A \infimal  \partial f)^{-1} (b) \Longleftrightarrow b\in a+ (A \infimal  \partial f) (a)
 \Longleftrightarrow b - a \in (A \infimal  \partial f) (a)
  \Longleftrightarrow a\in  (A \circ \partial f^* \circ A^\top) (b-a)$.
  Combining with $a\in \ransf A$ by (i), it is further equivalent to $a\in  (A \circ \partial f^* \circ A^\top) (\cP_{\ransf A} b-a)$, which yields (iii).
  
(v). In view of (ii) and (iv).
\end{proof}

Then, it is ready to develop expressions for the solution set of \eqref{p1}.
\begin{theorem} \label{t_X_p1}
The solution set $X$ of \eqref{p1} can be expressed in the following forms.

{\rm (i)} $X = (\partial f + A^\top A)^{-1}  A^\top  b$.

{\rm (ii)}  $X =x_\rsf^\star + \big( \big(\partial f^* ( A^\top r )-  x_\rsf^\star \big) \mcap \kersf A \big)$, where
 $x_\rsf^\star = A^\dagger ( I_m + A\infimal \partial f )^{-1}   (b) $.

{\rm (iii)} $X =x^\star +  \big( \big(\partial f^* ( A^\top r )-  x^\star \big) \mcap \kersf A \big)$, if a particular solution $x^\star\in X$ is given. 
\end{theorem}
\begin{proof}
(i) We develop:
\begin{eqnarray} \label{a1}
&& x^\star \in X := \Arg\min_{x\in\R^n} f(x)+\frac{1}{2}
\|Ax-b\|^2 
\nonumber \\
 & \  \Longleftrightarrow \ &
0\in \partial f(x^\star) + A^\top (Ax^\star-b) 
\quad \textrm{(by Fermat's rule \cite[Theorem 16.3]{plc_book})}
\nonumber \\
& \  \Longleftrightarrow \ &
0 \in (\partial f+ A^\top A) x^\star - A^\top  b 
\nonumber \\
& \  \Longleftrightarrow \ &
x^\star  \in X =  (\partial f + A^\top A)^{-1}  A^\top  b .
\nonumber 
\end{eqnarray}

(ii) To begin with Fermat's rule,  we have:
\begin{eqnarray} \label{a1}
&& 0\in \partial f(x^\star) + A^\top (Ax^\star-b) 
\nonumber \\
& \  \Longleftrightarrow \ &
x^\star \in (\partial f)^{-1} 
\big(A^\top (b-Ax^\star) \big) 
\nonumber \\
& \  \Longleftrightarrow \ &
x^\star \in  \partial f^* 
\big(A^\top (b-Ax^\star) \big) \quad 
\textrm{by $(\partial f)^{-1} = \partial f^*$ when  $f\in\Gamma_0(\R^n)$}
\nonumber \\
& \  \Longleftrightarrow \ &
x_\rsf^\star +  x_\ksf^\star\in (\partial f^* \circ 
A^\top ) \big(  b-Ax_\rsf^\star \big) \quad 
\textrm{by $x^\star = x_\rsf^\star +x_\ksf^\star$ and $Ax^\star = Ax_\rsf^\star$}
\nonumber \\
& \  \Longleftrightarrow \ &
\left\{ \begin{array}{lll}
Ax_\rsf^\star &\in & (A\circ \partial f^* \circ 
A^\top ) \big(  b-Ax_\rsf^\star \big); \\
 x_\ksf^\star &\in &  (\partial f^* \circ 
A^\top ) \big(  b-Ax_\rsf^\star \big) - x_\rsf^\star,\textrm{\ and\ }
x_\ksf^\star \in \kersf A.
\end{array}\right. \\
& \  \Longleftrightarrow \ &
\left\{ \begin{array}{lll}
b &\in & \big( I_m+ (A\circ \partial f^* \circ 
A^\top )^{-1} \big)  (Ax_\rsf^\star) ; \\
 x_\ksf^\star &\in &  \big( (\partial f^* (A^\top r ) - x_\rsf^\star \big) \mcap  \kersf A \quad 
 \textrm{by definition of $r=b-Ax_\rsf^\star$}.
\end{array}\right.
\nonumber \\
& \  \Longleftrightarrow \ &
\left\{ \begin{array}{lll}
Ax_\rsf^\star &= & \big( I_m+ (A\infimal \partial f \big)^{-1}  (b) ; 
\quad \textrm{by def. of $A\infimal \partial f$ and Lemma \ref{l_X_p1}-(ii)} \\
 x_\ksf^\star &\in & \big( (\partial f^* (A^\top r ) - x_\rsf^\star \big) \mcap  \kersf A.
 \nonumber
\end{array}\right.
\end{eqnarray}
from which follows (ii). 

(iii) If a particular solution $x^\star$ is provided, one must have by (ii) that $\cP_{\ransf A^\top} x^\star = x_\rsf^\star = A^\dagger \big( I_m + A\infimal \partial f \big)^{-1}   (b)$, and $\cP_{\kersf A} x^\star \in (\partial f^*(A^\top r)-x_\rsf^\star) \mcap \kersf A$. Then by (ii), we develop:
\begin{eqnarray}
X &=&\cP_{\ransf A^\top} x^\star  + \big( \big(\partial f^* ( A^\top r )-  \cP_{\ransf A^\top} x^\star \big) \mcap \kersf A \big)
\nonumber \\
 &=&\cP_{\ransf A^\top} x^\star  + \cP_{\kersf A} x^\star + \big( \big(\partial f^* ( A^\top r )-  \cP_{\ransf A^\top} x^\star - \cP_{\kersf A} x^\star \big) \mcap (\kersf A-\cP_{\kersf A} x^\star) \big)
\nonumber \\
 &=& x^\star  + \big( \big(\partial f^* ( A^\top r )-  x^\star  \big) \mcap  \kersf A  \big).
 \nonumber
\end{eqnarray}
\end{proof}
\begin{remark}
{\rm (i)} The key for applying Fermat's rule in the proof of Theorem \ref{t_X_p1}-(i) is that $\partial (f  +\frac{1}{2}\|A\cdot-b\|^2) = \partial f + \partial \big( \frac{1}{2}\|A\cdot-b\|^2 \big) $ by \cite[Theorem 16.47 and Corollary 16.48]{plc_book}, due to $\domsf \big( \frac{1}{2}\|A \cdot - b\|^2 \big)  = \R^n$. 

{\rm (ii)}  The monotonicity of $A\circ \partial f^* \circ  A^\top $  is not necessarily maximal, and so is $A\infimal \partial f$. Without maximality, the resolvent $J_{A\infimal \partial f}$  may not be of full domain. That is also why there is a possibility of $b\notin \ransf ( I_m +  A\infimal \partial f )$ in the proof of Lemma \ref{l_X_p1}-(ii).

{\rm (iii)} If   $b\in \ransf ( I_m +  A\infimal \partial f )$, the existence of $Ax_\rsf^\star$ has been ensured by Lemma \ref{l_X_p1}-(i).
\end{remark}

Further results are presented below.
\begin{proposition} \label{p_X_p1}
Defining $\cT = (\partial f+   A^\top A)^{-1} A^\top$, the following hold.

{\rm (i)} $\cT = \cT \circ \cP_{\ransf A}$.

{\rm (ii)} $X\ne \varnothing$, if and only if $x_\rsf^\star$ defined in Theorem \ref{t_X_p1}-(ii) exists.

{\rm (iii)} $\domsf \cT = \ransf (I_m +A\infimal \partial f)$.

{\rm (iv)} $\domsf \cT = \domsf \cT +\kersf A^\top = \cP_{\ransf A} (\domsf \cT) +\kersf A^\top$.
\end{proposition}
\begin{proof}
(i) is due to $A^\top = A^\top \circ \cP_{\ransf A}$.

(ii) Equivalently, we now show that $x_\ksf^\star$ exists, as long as $x_\rsf^\star$ exists. By Theorem \ref{t_X_p1}-(ii), it is equivalent to say $\big(\partial f^* ( A^\top r )-  x_\rsf^\star \big) \mcap \kersf A  \ne \varnothing$, if $x_\rsf^\star$ exists. This expression can be translated as: the projection of the set $\partial f^* ( A^\top r) $ onto $\ransf A^\top$ must (at least) cover the point $x_\rsf^\star$, i.e., $x_\rsf^\star \in \cP_{\ransf A^\top} \big(\partial f^* ( A^\top r ) \big) $. This is obviously true, because  $Ax_\rsf^\star \in (A \circ \partial f^*) ( A^\top r )$ by the first line of \eqref{a1}. 

(iii) immediately follows from (ii).

(iv) is from (i), or by combining (iii) and Lemma \ref{l_X_p1}-(iv).
\end{proof}

\subsection{Solution existence}
Equipped with the preliminary results, it is easy to obtain a number of sufficient (and necessary) conditions for solution existence. First, the solution existence should depend on the specific given $b\in\R^m$.
\begin{lemma} \label{l_ex_p1}
Regarding  \eqref{p1}, $X\ne\varnothing$, if and only if
$b\in \ransf( I_m +   A\infimal \partial f )$. 
\end{lemma}
\begin{proof}
Clear by Theorem \ref{t_X_p1}-(ii), Proposition \ref{p_X_p1}-(ii) and (iii). More importantly, this condition has implied the basic viability of $\ransf \partial f\mcap \ransf A^\top \ne \varnothing$, which guarantees the viability of $A\circ \partial f^*\circ A^\top$ and $\domsf (A\infimal \partial f) \ne \varnothing$. 
\end{proof}

From this follow the following key results.
\begin{theorem} [Solution existence]  \label{t_ex_p1}
Regarding \eqref{p1}, $X\ne\varnothing$ for any $b\in \R^m$,

{\rm (i)} if and only if $\domsf\cT = \ransf (I_m+A\infimal \partial f) = \R^m$;

{\rm (ii)} if and only if $\ransf (\partial f+   A^\top A) \supseteq \ransf A^\top$;

{\rm (iii)} if and only if $A\circ \partial f^* \circ   A^\top:\R^m\mapsto 2^{\R^m}$ is maximally monotone;

{\rm (iv)} if  $0\in \risf \  (\ransf \partial f - \ransf A^\top ) $ or equivalently, $(\risf\ \ransf \partial f) \mcap \ransf A^\top \ne \varnothing$;

{\rm (v)} if  $0\in \risf \   \ransf \partial f$.
\end{theorem}
\begin{proof}
(i) Clear by  Theorem \ref{t_X_p1}-(i) and Proposition \ref{p_X_p1}-(iii), or Lemma \ref{l_ex_p1}.

(ii) Clear by Theorem \ref{t_X_p1}-(i) or definition of $\cT$.

(iii) Based on the monotonicity of $A\infimal \partial f$ (due to $f\in \Gamma_0(\R^n)$ and \cite[Proposition 20.10]{plc_book}),  we develop, $\forall b\in\R^m$, that:
\begin{eqnarray} 
&& X\ne\varnothing,\ \forall b\in \R^m
\nonumber \\
&  \Longleftrightarrow &
\ransf ( I_m + A\infimal \partial f ) = \R^m
\quad \textrm{by (i)}
\nonumber \\
&  \Longleftrightarrow &
A \infimal  \partial f \textrm{ is maximally monotone (by Minty's theorem \cite[Theorem 21.1]{plc_book})}
\nonumber \\
& \  \Longleftrightarrow \ &
A\circ \partial f^* \circ A^\top \textrm{ is maximally monotone (by  \cite[Propopsition 20.22]{plc_book})}
\nonumber
\end{eqnarray}

(iv) The first condition follows from (ii), in view of \cite[Corollary 25.6 or Proposition 25.41-(iv)]{plc_book}. We now show the equivalence with the second condition under the assumption of $\ransf \partial f \mcap \ransf A^\top \ne\varnothing$. Since $\ransf A^\top$ is a linear subspace,  we develop  by \cite[Corollary 6.6.2]{rtr_book} that: 
\begin{eqnarray}
\risf \  (\ransf \partial f - \ransf A^\top ) 
&= &  \risf \  (\ransf \partial f + \ransf A^\top ) 
\nonumber \\
& = &  \risf \ \ransf \partial f +\risf\ \ransf A^\top  
\quad \textrm{by \cite[Corollary 6.6.2]{rtr_book}}
\nonumber \\
& = &  \risf \  \ransf \partial f + \ransf A^\top 
\nonumber \\
& = &  \risf \  \ransf \partial f - \ransf A^\top.
\nonumber
\end{eqnarray}
Thus, $0\in \risf \  (\ransf \partial f - \ransf A^\top ) $ is equivalent to  
$0 \in \risf \  \ransf \partial f - \ransf A^\top$, i.e., $\risf \  \ransf \partial f \mcap \ransf A^\top \ne\varnothing$. This can also be proved by \cite[Eq. (III-2.1.5)]{urruty}. Under this condition, we further have by  \cite[Proposition III-2.1.10]{urruty} or \cite[Corollary 6.5.1]{rtr_book} that $ \risf \  (\ransf \partial f \mcap \ransf A^\top )  =  \risf \ \ransf \partial f \mcap \ransf A^\top$. 
 
(v) According to the second condition of (iv), since $0 \in  \ransf A^\top$, (v) is sufficient for $ \ransf A^\top \mcap \risf\  \ransf \partial f \supseteq \{0\} \ne \varnothing$.
\end{proof}

We have a few remarks here. 
\begin{enumerate}
  \item The implication of above four conditions is (v)$\Longrightarrow$(iv)$\Longrightarrow$(iii)$\Longleftrightarrow$(ii)$\Longleftrightarrow$(i). The condition (ii) was mentioned in \cite{plc_warped}. However, it is of very little interest, because it does not decouple the roles of $f$ and $A$, and fails to reveal any interplay between $f$ and $A$. We focus on (iii)--(v) in this paper.

\item  The solution existence of \eqref{p1} does not require  $0\in \ransf \partial f$, e.g., $f(x)=x$ or $e^x$. Hence, the stronger condition (iv) is usually not satisfied.

\item Again, the viability condition of $\ransf \partial f\mcap \ransf A^\top \ne\varnothing$ is also implied by the maximal monotonicity of $A\circ \partial f^* \circ A^\top$. 
\end{enumerate}

The solution to \eqref{p1} has the following well-known properties. 
\begin{corollary} \label{c_unique}
If $X\ne\varnothing$,  the following hold.

{\rm (i)} $Ax^\star$ is unique for any $x^\star \in X$,  which is given as
$Ax^\star = ( I_m +  A\infimal  \partial f )^{-1}  (b)$.

{\rm (ii)}  $x_\rsf^\star$ is unique.

{\rm (iii)} The residual $r: = b-A x^\star$ is unique.

{\rm (iv)} $f(x^\star)$ is unique.

{\rm (v)} $X$ is closed and convex. 
\end{corollary}
\begin{proof}
(i): In view of Lemma \ref{l_X_p1}-(iii).

(ii)--(iv): Clear from (i).

(v) \cite[Proposition 16.4-(iii)]{plc_book}.
\end{proof}

The uniqueness of $Ax^\star$ has been presented in \cite[Proposition 3-(1)]{vaiter_maximal}, \cite[Lemma 2]{vaiter_low} and \cite[lemma 4.1]{zh_lasso} for lasso problem. This is now extended to general $f\in\Gamma_0(\R^n)$. 

Be aware of  that $ A\circ \partial f^* \circ   A^\top$ cannot be simplified to
$\partial (f^* \circ   A^\top)$ generally, since $ A\circ \partial f^* \circ   A^\top \subseteq  \partial (f^* \circ   A^\top)  $  by \cite[Proposition 16.6]{plc_book}. Under Theorem \ref{t_ex_p1}-(iv) or (v), it then holds that $ A\circ \partial f^* \circ   A^\top =  \partial (f^* \circ   A^\top)  $, and thus,   $Ax^\star$ can be written as $Ax^\star =  \big( I_m +    (\partial (f^* \circ   A^\top) )^{-1}  \big)^{-1}  (b) = \big( I_m +  \partial (f^* \circ   A^\top)^*\big)^{-1}  (b) $, where the last equality is due to $f^*\circ A^\top \in \Gamma_0(\R^m)$ as mentioned above.

It is emphasized that our conditions do not guarantee the compactness of the solution set. The existence and compactness are discussed separately in this paper. The existing works, which often tackle both problems together, will be discussed and compared in Sect. \ref{sec_cmp_p1}.

\subsection{Compactness} \label{sec_cmp_p1}
\subsubsection{Our results}
It is then natural to seek the conditions, under which the solution set $X$ of \eqref{p1} is bounded or reduces to a singleton. Let us first present  several nice properties of recession cone, and then develop the compact property.
\begin{lemma} \label{l_recession}
Let $D$ be a proper linear subspace of $\R^n$, and  $C\subset \R^n$ be a closed and convex set of $\R^n$ such that $C\mcap D \ne \varnothing$, the following hold.

{\rm (i)} $(C\mcap D)_{\infty} = C_{\infty} \mcap D$;

{\rm (ii)}  $(C-x_0)_{\infty} = C_{\infty}$,  $\forall x_0\in C$;

{\rm (iii)} $C\mcap D \subseteq \cP_{D} (C)$.
\end{lemma}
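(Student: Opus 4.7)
The plan is to dispatch the three items separately, leaning on two elementary facts: first, that for any closed convex set $E \subseteq \R^n$ and any $x_0 \in E$, a vector $d$ lies in $E_\infty$ if and only if $x_0 + td \in E$ for every $t \ge 0$; and second, that a linear subspace $D$, being a closed convex cone, satisfies $D_\infty = D$.

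For part (i), I would invoke the general recession-calculus identity $(C_1 \mcap C_2)_\infty = (C_1)_\infty \mcap (C_2)_\infty$, which holds for any two closed convex sets with non-empty intersection (see, e.g., \cite[Corollary 8.3.3]{rtr_book} or \cite[Proposition 6.50]{plc_book}). Since $C\mcap D \ne \varnothing$ by assumption and both $C$ and $D$ are closed convex, this gives $(C\mcap D)_\infty = C_\infty \mcap D_\infty = C_\infty \mcap D$. If a more self-contained argument is preferred, the inclusion $(C\mcap D)_\infty \subseteq C_\infty \mcap D$ is immediate from monotonicity of the recession-cone operation (applied to $C\mcap D \subseteq C$ and to $C\mcap D \subseteq D$, using $D_\infty = D$), while the reverse inclusion follows by picking any $x \in C\mcap D$ and any $d\in C_\infty \mcap D$: then $x + td \in C$ for all $t\ge 0$ since $d\in C_\infty$ and $x\in C$, and $x + td \in D$ since $D$ is a subspace with $x,d\in D$, so $x+td \in C\mcap D$, yielding $d\in (C\mcap D)_\infty$.

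Parts (ii) and (iii) are one-line observations. For (ii), the closed convex set $C - x_0$ contains the origin, and directly from the characterization of $E_\infty$ via translates, $d \in (C-x_0)_\infty$ iff $td \in C - x_0$ for all $t\ge 0$ iff $x_0 + td \in C$ for all $t\ge 0$, which, because $x_0\in C$, is equivalent to $d \in C_\infty$. For (iii), any $x \in C\mcap D$ satisfies $\cP_D x = x$ (idempotence of the projector on $D$), so $x = \cP_D x \in \cP_D(C)$, giving $C\mcap D \subseteq \cP_D(C)$.

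The only step requiring any real care is (i); the standard pitfall is that the intersection formula for recession cones can fail for general closed convex sets unless some qualification holds, but here the fact that one of the two sets is a subspace (hence itself a recession cone) makes the identity unconditional beyond non-empty intersection. Items (ii) and (iii) are routine and serve mainly to fix notation for the subsequent proof of Proposition~\ref{p_cmp_p1}.
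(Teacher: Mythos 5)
Your proposal is correct and follows essentially the same route as the paper's: the direct two-inclusion argument for (i) (the reverse inclusion is verbatim the paper's), base-point/translation invariance of the recession cone for (ii), and idempotence of the projector on $D$ for (iii). One small aside worth correcting: the identity $(C_1\cap C_2)_\infty=(C_1)_\infty\cap (C_2)_\infty$ requires no qualification beyond closedness, convexity and non-empty intersection (Rockafellar, Corollary~8.3.3), so it is not the subspace structure of $D$ that makes it "unconditional" --- that structure is only needed to identify $D_\infty$ with $D$.
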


The proof is postponed to Appendix \ref{app_1}.

\begin{proposition} [Solution compactness]  \label{p_cmp_p1}
The solution set $X$ of \eqref{p1} is com-pact,  

{\rm (i)} if and only if $\big(  \partial f^* (   A^\top  r)   \big)_{\infty} \mcap \kersf A = \{0\} $, where $\big(  \partial f^* (   A^\top  r)   \big)_{\infty}$ denotes the recession cone of 
$\partial f^* (   A^\top  r)$.

{\rm (ii)} if $ \big( \cP_{\kersf A} (\partial f^* (   A^\top  r)  ) \big)_{\infty} = \{0\}$. 
\end{proposition}
\begin{proof}
(i) Denote $C=  \partial f^* (   A^\top  r)  -x^\star$ for brevity, which satisfies $0\in C\mcap \kersf A \ne\varnothing$. We then develop:
\begin{eqnarray}
&& \textrm{compactness of } X 
\nonumber \\
& \  \Longleftrightarrow \ & \textrm{compactness of }
C \mcap \kersf A \quad \textrm{[by Theorem \ref{t_X_p1}-(iii)]}
\nonumber \\
& \  \Longleftrightarrow \ & \textrm{boundedness of }
C \mcap \kersf A\quad \textrm{[by Corollary \ref{c_unique}-(v)]}
\nonumber \\
& \  \Longleftrightarrow \ &
(C \mcap \kersf A)_{\infty}=\{0\} \quad \textrm{[by \cite[Proposition III-2.2.3]{urruty}]}
\nonumber \\
& \  \Longleftrightarrow \ &
C_{\infty} \mcap \kersf A = \{0\} \quad \textrm{[by Lemma \ref{l_recession}-(i) and $0\in C\cap \kersf A$]}
\nonumber \\
& \  \Longleftrightarrow \ &
 \big(  \partial f^* (   A^\top  r) \big)_{\infty} \mcap \kersf A = \{0\} \quad \textrm{[by Lemma \ref{l_recession}-(ii)]}
\nonumber
\end{eqnarray}

(ii) Since $C\mcap \kersf A\subseteq \cP_{\kersf A} (C)$ by Lemma \ref{l_recession}-(iii), and then it is clear by definition that
 $(C\mcap \kersf A)_{\infty} = \big(  \partial f^* (   A^\top  r) \big)_{\infty} \mcap \kersf A \subseteq (\cP_{\kersf A} (C))_{\infty}$. It suffices to let $ (\cP_{\kersf A} (C))_{\infty} = \{0\}$.  Last, note that $ (\cP_{\kersf A} (C))_{\infty}  = (\cP_{\kersf A} (\partial f^* (   A^\top  r)  -x^\star))_{\infty}   = (\cP_{\kersf A} (\partial f^* (   A^\top  r))  -  x_\ksf^\star)_{\infty}  =  (\cP_{\kersf A} \big( \partial f^* (   A^\top  r) ) \big)_{\infty}$, where the last equality is due to Lemma \ref{l_recession}-(ii).
\end{proof}

Proposition \ref{p_cmp_p1}-(ii) is sufficient but not necessary for solution compactness.  This will be shown by Example \ref{eg_lasso} later.

\subsubsection{Developments of existing results}
Many concepts related to solution compactness have been proposed in literature, e.g., \cite{vaiter_thesis,lasso_reload,fadili_cone,fadili_iiima}. In this section, we are going to investigate the relations among them, and further develop the existing results. The following concepts are commonly used for analyzing solution compactness in literature, e.g., \cite{vaiter_thesis,fadili_iiima,venkat_geometry}. 
\begin{definition} \label{def_cmp}
{\rm (i) (restricted injectivity) \cite[Definition 5.3]{vaiter_thesis}} A linear mapping $A$ is injective on a set $C\subset\R^n$ such that $0\in C$, if $Av=0$ with $v\in C$ yields $v=0$.

{\rm (ii) (restricted coercivity)} A function $f\in\Gamma_0(\R^n)$ is coercive on a proper linear subspace $D$, if $\lim_{\|\cP_D x\|\rightarrow +\infty}
f(x) =+\infty$, where $\cP_D x$  denotes the projection of $x$ onto $D$. 

{\rm (iii) (recession cone)} Given $f\in\Gamma_0(\R^n)$, the recession cone of $f$ is defined as $\cR_f = \big\{ d\in\R^n:  f(x+t d) \le f(x), \ \forall t > 0,\ \forall x\in \domsf f \big\}$.

{\rm (iv) (sublevel set) \cite[Definition 2.5]{vaiter_thesis}} A sublevel set of $f$ passing through $x_0$ is defined as  $\mathrm{slev}_{x_0} f:=\big\{z\in\R^n:  f(z)\le f(x_0) \big\}$.

{\rm (v) (recession direction) \cite[Definition 3.1.2]{teboulle_book}} The recession direction of $f$ consists of all directions $d\in\R^n$ satisfying $f_{\infty} (d) \le 0$, where $f_{\infty}$ denotes the recession function of $f$.
\end{definition}

The concepts in Definition \ref{def_cmp} can be connected and unified as follows.
\begin{lemma} \label{l_cmp}
Given $f\in\Gamma_0(\R^n)$, the following hold.

{\rm (i)} The restricted injectivity of $A$ on a set $C\subset\R^n$ containing 0 can also be defined as  $\kersf A\mcap C = \{0\}$, i.e.,  the only vector in $C$ that $A$ maps to the zero vector is the zero vector itself.

{\rm (ii)} $\cR_f = \big\{ d\in\R^n:  f_{\infty} (d) \le 0 \big\}$, i.e, the recession cone of $f$ is exactly the set of recession directions.

{\rm (iii)}  $\kersf f_{\infty} \subseteq \cR_f = (\slevsf_{x} f )_{\infty}$, $\forall x\in\domsf f$.

{\rm (iv)} If $\inf f>-\infty$,  then $f_{\infty}(d) \ge 0$ ($\forall d\in\R^n$), and $\kersf f_{\infty} = \cR_f$. 

{\rm (v)} If $\displaystyle \inf_{\substack{ t>0    \\ d \in \kersf A } }  f(x+td)>-\infty$ ($\forall x\in\domsf f$),  then  $\kersf f_{\infty}\mcap \kersf A = \cR_f \mcap \kersf A$. 
\end{lemma}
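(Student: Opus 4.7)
The plan is to dispatch parts (i)--(iv) by invoking standard convex-analytic identifications of recession cones, recession functions, and sublevel sets of proper l.s.c.\ convex functions; only (v) requires a genuinely new argument.

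For (i), the stated injectivity condition ``$Av=0$ with $v\in C$ implies $v=0$'' reads verbatim as $\kersf A\mcap C\subseteq\{0\}$, and the hypothesis $0\in C$ promotes this to equality. For (ii), I would invoke the standard representation of the recession function of a proper l.s.c.\ convex $f$,
\[
f_\infty(d)=\sup_{t>0}\frac{f(x+td)-f(x)}{t}=\lim_{t\to+\infty}\frac{f(x+td)-f(x)}{t},
\]
which, by convexity, is independent of the base point $x\in\domsf f$. The sign condition $f_\infty(d)\le 0$ is then equivalent (by monotonicity of the difference quotient in $t$) to $f(x+td)\le f(x)$ holding for every $t>0$ and every $x\in\domsf f$, which is precisely the definition of $\cR_f$.

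For (iii), the inclusion $\kersf f_\infty\subseteq \cR_f$ is immediate from (ii). The identity $\cR_f=(\slevsf_x f)_\infty$ for any $x\in\domsf f$ is the classical Rockafellar identification of the recession cone of a proper l.s.c.\ convex function with the (common) recession cone of any non-empty sublevel set, and I would simply cite the relevant result from \cite{rtr_book} or \cite{urruty}. For (iv), the key observation is that $\inf f>-\infty$ forces $f_\infty\ge 0$ pointwise: if $f_\infty(d)<0$ for some $d$, the representation in (ii) yields $f(x+td)\to-\infty$ as $t\to+\infty$, contradicting the assumed lower bound. Combined with (ii) this gives $\cR_f=\{d:f_\infty(d)\le 0\}=\{d:f_\infty(d)=0\}=\kersf f_\infty$.

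The main obstacle is (v). One inclusion, $\kersf f_\infty\mcap\kersf A\subseteq \cR_f\mcap\kersf A$, is free from (iii). For the reverse, let $d\in\cR_f\mcap\kersf A$; by (ii) we already have $f_\infty(d)\le 0$, so it remains to rule out strict negativity. If $f_\infty(d)<0$, then for each $x\in\domsf f$ the representation in (ii) forces $f(x+td)\to-\infty$ as $t\to+\infty$ along a direction $d\in\kersf A$, which directly contradicts the hypothesis $\inf_{t>0,\,d\in\kersf A}f(x+td)>-\infty$. Hence $f_\infty(d)=0$, i.e.\ $d\in\kersf f_\infty$. The subtlety worth emphasizing is that the hypothesis bounds $f$ from below only along rays parallel to $\kersf A$ and not globally, so (iv) cannot be invoked directly; however, this one-directional boundedness is precisely what is needed, since the reverse inclusion only tests directions $d\in\kersf A$.
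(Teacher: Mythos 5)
Your proposal is correct and follows essentially the same route as the paper: (i)--(ii) by the sup/limit representation of $f_\infty$, (iv) and (v) by ruling out $f_\infty(d)<0$ via the divergence $f(x+td)\to-\infty$, with the lower bound restricted to directions in $\kersf A$ in (v) being exactly the point. The only difference is cosmetic: for $\cR_f=(\slevsf_x f)_\infty$ in (iii) you cite the classical Rockafellar identification, whereas the paper reproves it directly from the definition of the recession cone of a sublevel set.
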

\begin{proof}
(i) clear.  

(ii) First, if $d\in\cR_f$, then $f(x+td)\le f(x)$, $\forall x\in\domsf f$ and $\forall t>0$. This implies that $\sup_{t>0} \frac{f(x+td)-f(x)}{t} \le 0$, which in turn yields $f_{\infty}(d) \le 0$ by definition of $f_{\infty}$. The converse statement is also true by the same argument. 

(iii) It follows that  $\kersf f_{\infty} =  \big\{ d\in\R^n:  f_{\infty} (d) = 0 \big\}
 \subseteq  \big\{ d\in\R^n:  f_{\infty} (d) \le 0 \big\} = \cR_f$, where the last equality is due to (i).

Now let us show $\cR_f = (\slevsf_{x} f )_{\infty}$, $\forall x\in\domsf f$. First, we claim that given $x_0\in\domsf f$, it holds that
\be \label{h2}
(\slevsf_{x_0} f )_{\infty} = \big\{ d\in\R^n: f(x_0+td) \le f(x_0), \ \forall t>0 \big\}.
\ee
Indeed, by definition of recession cone, we have $(\slevsf_{x_0} f )_{\infty} = \big\{ d\in\R^n: z+td \in  \slevsf_{x_0} f, \ \forall t>0,\ \forall z\in \slevsf_{x_0} f  \big\}$. Combining with the definition of $\slevsf_{x_0} f$, it becomes:
\[
(\slevsf_{x_0} f )_{\infty} = \big\{ d\in\R^n: f(z+td) \le f(x_0), \ \forall t>0,\ \forall z \textrm{\ satisfying\ } f(z) \le f(x_0)  \big\}.
\]
Since the recession cone does not depend on specific $z$, taking $z=x_0$ proves our claim of \eqref{h2}. Moreover, the  proof also shows that $x_0$ can be arbitrary point in $\domsf f$. Last, observe that \eqref{h2} exactly coincides with definition of $\cR_f$ (cf. Definition \ref{def_cmp}-(iii)).

(iv) If $\inf f>-\infty$ , then by definition we have, $\forall d\in\R^n$, $\forall x\in \domsf f$, that
\[
f_{\infty} (d) = \lim_{t\rightarrow +\infty} 
\frac{f(x+td)-f(x)}{t}.
\] 
Since $f(x+td) >-\infty$ as $t\rightarrow +\infty$, $f_{\infty}(d)$ cannot be negative, i.e., $f_{\infty}(d) \ge 0$. Under this prerequisite, it then follows that $\kersf f_{\infty} = \cR_f$.

(v) It is clear from (iii) that $\kersf f_{\infty}\mcap \kersf A \subseteq \cR_f \mcap \kersf A$. To show $\kersf f_{\infty}\mcap \kersf A \supseteq \cR_f \mcap \kersf A$, we first claim that $f_{\infty} (d) = \lim_{t\rightarrow +\infty} \frac{f(x+td)-f(x)} {t} \ge 0$, $\forall d\in \kersf A$, if  $\displaystyle \inf_{\substack{ t>0    \\ d \in \kersf A } }  f(x+td)>-\infty$. Take $d\in \cR_f \mcap \kersf A$, then $f_{\infty}(d) \le 0$ and $d\in \kersf A$. Combining with  $f_{\infty} (d)  \ge 0$, $\forall d\in \kersf A$, we have $f_{\infty}(d) = 0$, $\forall d\in \kersf A$, which shows $d\in \kersf f_{\infty}$.
\end{proof}

\begin{remark}
{\rm (i)} $\cR_f$ contains all directions along which $f$ cannot grow to $+\infty$, and  $\cR_f \ne \kersf f_{\infty}$ for general $f\in\Gamma_0(\R^n)$. Taking $f(x)=x$ for example, where $\kersf f_{\infty} = \{0\} \subset  (-\infty,0] =\cR_f$. However, $\cR_f =\kersf f_{\infty}$ does not necessarily imply $\inf f>-\infty$, considering $f=-\log$, which satisfies $\cR_f=\kersf f_{\infty}=[0,+\infty)$ and $\inf (-\log) = -\infty$.

{\rm (ii)} The converse statement of Lemma \ref{l_cmp}-(iv) is not true, i.e.,  $f_{\infty}(d)\ge 0$ ($\forall d\in\R^n$) does not necessarily imply $\inf f>-\infty$. Also, if $\inf f=-\infty$, it does not necessarily hold that $f_{\infty}(d) <0$ for some $d$, e.g., $f=-\log$ defined on $(0,+\infty)$. 
\end{remark}

Our proposed {\it restricted coercivity}, as a new definition, will be a focus here. Its implications are as follows. According to \cite[Definition 3.1.1]{teboulle_book}, Lemma \ref{l_coercive}-(iv) can also be regarded as a definition of restricted coercivity. 
\begin{lemma} \label{l_coercive}
If $f \in\Gamma_0(\R^n)$ is coercive on a proper linear subspace $D\subset \R^n$, then the following hold.

{\rm (i)} $\lim_{t\rightarrow +\infty} f(x_0+td) =+\infty$, $\forall x_0\in \domsf f$, $\forall d\in D$;

{\rm (ii)} $\liminf_{\| \cP_D x\|\rightarrow \infty} \frac{f(x)}{\|\cP_D x\|}>0$;

{\rm (iii)} $\exists \alpha>0, \beta\in\R$, such that $f(x) \ge \alpha \|\cP_D x\| +\beta$;

{\rm (iv)} $f_{\infty}(d) >0$, $\forall d\in D\backslash \{0\}$.
\end{lemma}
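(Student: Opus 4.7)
The plan is to prove the four claims in the order (i), (iv), (iii), (ii), each building on the previous. Claim (i) is immediate from the definition: for $d\in D\setminus\{0\}$ and $x_0\in\domsf f$, the projection satisfies $\cP_D(x_0+td)=\cP_D x_0+td$, so $\|\cP_D(x_0+td)\|\to+\infty$ as $t\to+\infty$, and restricted coercivity forces $f(x_0+td)\to+\infty$.

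For (iv), I would reduce to a one-dimensional convex function. Fixing $d\in D\setminus\{0\}$ and $x_0\in\domsf f$, set $\varphi(t):=f(x_0+td)$, which is convex with values in $(-\infty,+\infty]$ and by (i) satisfies $\varphi(t)\to+\infty$. The difference quotient $(\varphi(t)-\varphi(0))/t$ is nondecreasing on $(0,+\infty)$ by convexity, and its limit is by definition $f_\infty(d)$; if this limit were nonpositive, monotonicity would force $\varphi(t)\le\varphi(0)$ for every $t>0$, contradicting $\varphi(t)\to+\infty$. Hence $f_\infty(d)>0$.

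For (iii), I would first upgrade (iv) to a uniform lower bound: since $f_\infty$ is lsc, convex, and positively homogeneous, and the unit sphere $\{d\in D:\|d\|=1\}$ is compact, Weierstrass' theorem yields $c:=\min\{f_\infty(d):d\in D,\|d\|=1\}>0$, so $f_\infty(p)\ge c\|p\|$ for every $p\in D$ by homogeneity. Next, passing to the infimal projection $\widetilde f(p):=\inf\{f(x):\cP_D x=p\}$ on $D$, which is convex and, by the restricted coercivity hypothesis, satisfies $\widetilde f(p)\to+\infty$ as $\|p\|\to+\infty$. The main obstacle here is to verify that $\widetilde f$ is proper: if $\widetilde f(p_0)=-\infty$, convexity would force $\widetilde f\equiv-\infty$ along the open segments from $p_0$ into any direction where $\widetilde f$ is finite, producing an unbounded ray on which $\widetilde f=-\infty$ and contradicting $\widetilde f\to+\infty$ at infinity. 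Once $\widetilde f$ is proper lsc convex and coercive on the finite-dimensional subspace $D$, the standard linear-minorant theorem for coercive proper lsc convex functions produces $\alpha>0$ and $\beta\in\R$ with $\widetilde f(p)\ge\alpha\|p\|+\beta$, so that $f(x)\ge\widetilde f(\cP_D x)\ge\alpha\|\cP_D x\|+\beta$ for every $x\in\R^n$. Statement (ii) then follows at once: dividing this inequality by $\|\cP_D x\|$ and letting $\|\cP_D x\|\to+\infty$ gives $\liminf f(x)/\|\cP_D x\|\ge\alpha>0$.
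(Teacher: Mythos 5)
Your proofs of (i) and (iv) are correct. For (iv) you take a genuinely different route from the paper: the paper deduces $f_{\infty}(d)\ge\alpha\|d\|$ from the linear minorant in (iii), whereas you get $f_\infty(d)>0$ directly from (i) via the monotonicity of the difference quotients of $t\mapsto f(x_0+td)$. Your route is actually more robust, because it does not pass through (iii) --- which is where the problem lies.

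The gap is exactly at the point you flag as the main obstacle in (iii): the properness of the marginal function $\widetilde f(p)=\inf\{f(x):\cP_D x=p\}$. Your argument is that $\widetilde f(p_0)=-\infty$ would propagate along segments toward points of finiteness and thereby produce an unbounded ray on which $\widetilde f=-\infty$. But those segments are bounded; an unbounded set on which $\widetilde f=-\infty$ only appears if $\domsf\widetilde f=\cP_D(\domsf f)$ is itself unbounded. When $\cP_D(\domsf f)$ is bounded, the hypothesis that $f$ is coercive on $D$ is vacuously satisfied (there are no points of $\domsf f$ with $\|\cP_D x\|$ large), $\widetilde f$ can be identically $-\infty$ on its domain, and conclusion (iii) is simply false. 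Concretely, take $n=2$, $D=\R\times\{0\}$ and $f(x_1,x_2)=x_2+\iota_{[-1,1]}(x_1)$: this $f$ belongs to $\Gamma_0(\R^2)$ and equals $+\infty$ whenever $\|\cP_D x\|>1$, so it is coercive on $D$ in the sense of Definition \ref{def_cmp}-(ii); yet $f(0,x_2)=x_2$ is unbounded below while $\alpha\|\cP_D(0,x_2)\|+\beta=\beta$ is constant, so no minorant $\alpha\|\cP_D\cdot\|+\beta$ with $\alpha>0$ exists. Hence (iii), and your derivation of (ii) from it, cannot be established as stated; one needs an extra hypothesis such as $\inf f>-\infty$ or unboundedness of $\cP_D(\domsf f)$, under which your properness argument does close (pass to $\risf\,\domsf\widetilde f$, which is then unbounded, rather than to individual segments). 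Note that (ii) itself survives the counterexample (the liminf there is taken over points where $f=+\infty$), so it should be proved directly rather than through (iii). Two smaller points: the Weierstrass bound $f_\infty\ge c\|\cdot\|$ on $D$ that opens your argument for (iii) is never used afterwards, and $\widetilde f$, being a marginal function, need not be lsc, so you would have to pass to its closure before invoking the linear-minorant theorem. (For what it is worth, the paper's own proof of (i)--(iii) is a bare citation to results about ordinary coercivity and does not address this degeneracy either.)
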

\begin{proof}
(i)--(iii): \cite[Proposition 14.16]{plc_book} and \cite[Proposition 3.1.2]{teboulle_book}.

(iv) By definition of recession function \cite[Proposition 2.4]{vaiter_thesis}, we have, $\forall d\in D$, 
\[
f_{\infty}(d) = \lim_{t\rightarrow+\infty} \frac{f(x+td)-f(x)}{t}
\ge   \lim_{t\rightarrow+\infty} 
\frac{ \alpha \|\cP_D (x+td)\| +\beta-f(x)}{t}
= \alpha \|d\|.
\]
\end{proof}

We summarize the sufficient and necessary conditions for solution existence and compactness as below, which include several existing results. 
\begin{proposition} \label{p_cmp}
The solution set of \eqref{p1} is non-empty and compact, if and only if any of the following equivalent conditions holds.

{\rm (i)}  $f+\frac{1}{2}\|A\cdot-b\|^2$ is coercive;

{\rm (ii)}  sublevel set, i.e., $\mathrm{slev}_\xi (f+\frac{1}{2}\|A\cdot-b\|^2)$ for any $\xi\in\R$,  is bounded; 

{\rm (iii)}  $0\in \intsf\ \domsf (f+\frac{1}{2}\|A\cdot-b\|^2)^*$;

{\rm (iv)} $f$ is coercive on $\kersf A$;

{\rm (v)} $\lim_{t\rightarrow \infty} f(x_0+td) =+\infty$, $\forall d\in \kersf A\backslash \{0\}$, $\forall x_0\in\domsf f$;

{\rm (vi)} $f_{\infty}(d) >0$, $\forall d\in \kersf A\backslash \{0\}$; 

{\rm (vii)} $\cR_f \mcap \kersf A=\{0\}$, i.e., $A$ is injective on $\cR_f$;

{\rm (viii)} $\kersf f_{\infty} \mcap \kersf A=\{0\}$, if $\inf f>-\infty$ {\rm \cite[Lemma 5.1]{vaiter_thesis}};

{\rm (ix)}  $\kersf f_{\infty} \mcap \kersf A=\{0\}$, if 
$\displaystyle \inf_{\substack{ t>0    \\ d \in \kersf A } }  f(x+td)>-\infty$, $\forall x\in\domsf f$.
\end{proposition}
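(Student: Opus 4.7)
The plan is to prove the nine conditions mutually equivalent, and each equivalent to the non-emptiness and compactness of $X$, by organizing them into three blocks and pivoting on the recession function of $f$ restricted to $\kersf A$.

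First I would dispose of the equivalence among (i), (ii), (iii) and the non-empty-compact property of $X$ by a direct appeal to classical characterizations of coercivity in $\Gamma_0(\R^n)$: for any proper lsc convex $g$, $\Arg\min g$ is non-empty and compact iff $g$ is coercive, iff every sublevel set of $g$ is bounded, iff $0\in\intsf\,\domsf g^*$ (see, e.g., \cite[Proposition 11.15 and Proposition 14.16]{plc_book}). Since $g=f+\frac{1}{2}\|A\cdot-b\|^2\in\Gamma_0(\R^n)$ (the quadratic term is finite-valued and continuous everywhere), this yields (i)$\Longleftrightarrow$(ii)$\Longleftrightarrow$(iii) together with equivalence to non-empty-compact $X$ in one shot.

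The core block is then (i)$\Longleftrightarrow$(iv)$\Longleftrightarrow$(v)$\Longleftrightarrow$(vi). The cleanest route passes through recession functions: a direct computation gives $\bigl(\frac{1}{2}\|A\cdot-b\|^2\bigr)_{\infty}(d)=0$ when $d\in\kersf A$ and $+\infty$ otherwise, and since the quadratic is finite everywhere the recession-sum identity applies without qualification, so $g_{\infty}(d)=f_{\infty}(d)$ for $d\in\kersf A$ and $g_{\infty}(d)=+\infty$ for $d\notin\kersf A$. Combined with the standard equivalence ``$g$ coercive iff $g_{\infty}(d)>0$ for all $d\ne 0$'' (cf.\ Lemma \ref{l_cmp}-(ii) and \cite[Proposition 3.1.3]{teboulle_book}), this gives (i)$\Longleftrightarrow$(vi). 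The chain (iv)$\Longrightarrow$(v)$\Longrightarrow$(vi) is already contained in Lemma \ref{l_coercive}; for the converses I would use that $t\mapsto \frac{f(x_0+td)-f(x_0)}{t}$ is nondecreasing on $(0,\infty)$ with limit $f_{\infty}(d)$, so $f_{\infty}(d)>0$ forces $f(x_0+td)\to +\infty$ at linear rate (yielding (v)), while $f_{\infty}(d)=0$ keeps the monotone quotient $\le 0$ and hence $f(x_0+td)\le f(x_0)$ for all $t>0$, contradicting (v). The same monotone-quotient argument, applied to the convex function $f$ restricted to the subspace $\kersf A$, closes (v)$\Longleftrightarrow$(iv).

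Finally I would translate to the recession-cone formulations of (vii)--(ix). Lemma \ref{l_cmp}-(ii) identifies $\cR_f=\{d:f_{\infty}(d)\le 0\}$, so using $f_{\infty}(0)=0$ and the positive homogeneity of $f_{\infty}$, condition (vi) is identical to (vii) $\cR_f\mcap\kersf A=\{0\}$. Under $\inf f>-\infty$, Lemma \ref{l_cmp}-(iv) gives $\kersf f_{\infty}=\cR_f$, so (vii)$\Longleftrightarrow$(viii); under the milder hypothesis of (ix), Lemma \ref{l_cmp}-(v) gives $\kersf f_{\infty}\mcap\kersf A=\cR_f\mcap\kersf A$, so (vii)$\Longleftrightarrow$(ix). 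The only part with real substance is the recession-function sum calculation that underlies (i)$\Longleftrightarrow$(vi); the rest is essentially bookkeeping on top of the lemmas already developed.
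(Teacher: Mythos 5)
Your proposal is correct, and for the outer blocks --- (i)$\Leftrightarrow$(ii)$\Leftrightarrow$(iii) via the classical coercivity characterizations, and (vi)$\Leftrightarrow$(vii)$\Leftrightarrow$(viii)/(ix) via Lemma \ref{l_cmp} --- it coincides with the paper's proof. Where you genuinely diverge is the bridge between the coercivity of $g:=f+\frac{1}{2}\|A\cdot-b\|^2$ and the kernel-restricted conditions: the paper proves (i)$\Leftrightarrow$(v) directly, by restricting to rays $d\in\kersf A$ for one direction and, for the converse, splitting on whether $d\in\kersf A$ and letting the term $\frac{t^2}{2}\|Ad_\rsf\|^2$ dominate when $d\notin\kersf A$; you instead prove (i)$\Leftrightarrow$(vi) by computing $g_\infty(d)=f_\infty(d)$ for $d\in\kersf A$ and $g_\infty(d)=+\infty$ otherwise, and invoking ``$g$ coercive iff $g_\infty>0$ off the origin.'' These are two faces of the same estimate, but your route is arguably cleaner: it makes (vi) the pivot (which is where all of (vii)--(ix) attach anyway) and replaces the paper's $\varepsilon$-free but slightly informal domination argument with a one-line recession-function identity; the paper's route has the minor advantage of not needing any sum rule for recession functions, only the definition of $g$ along a ray. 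One shared soft spot worth noting: for (iv) both you and the paper only really establish coercivity of $f$ \emph{restricted to an affine translate of} $\kersf A$ (your ``monotone-quotient argument applied to $f$ restricted to the subspace''), whereas Definition \ref{def_cmp}-(ii) literally asks that $f(x)\to+\infty$ whenever $\|\cP_{\kersf A}x\|\to+\infty$ with $x$ ranging over all of $\R^n$; under that literal reading the implication (vi)$\Rightarrow$(iv) needs an extra argument (consider $f(x_1,x_2)=|x_2|+x_1$ with $\kersf A=\{0\}\times\R$, where $f_\infty>0$ on $\kersf A\setminus\{0\}$ yet $f(-n^2,n)\to-\infty$), so (iv) should be read as coercivity of $f|_{x_0+\kersf A}$ for this chain to close. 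This is an issue inherited from the statement rather than a defect specific to your argument.
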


\begin{proof}
(i)--(iii): by \cite[Proposition 14.16]{plc_book} and \cite[Proposition 3.1.3]{teboulle_book}.

(iv)--(vi): Lemma \ref{l_coercive}.

(i)$\Longleftrightarrow$(v): coercivity of $f+\frac{1}{2}\|A\cdot-b\|^2$ implies $\lim_{t\rightarrow +\infty} f(x+td) +\frac{1}{2}
\|A(x+td)-b\|^2 =+\infty$, $\forall d\in\R^n$. If $d\in \kersf A$, it becomes  $\lim_{t\rightarrow +\infty} f(x+td) +\frac{1}{2}
\|Ax-b\|^2 =+\infty$, which indicates that  $\lim_{t\rightarrow +\infty} f(x+td) =+\infty$, with $d\in\kersf A$. This is (v). Conversely, if $d\in\kersf A$, since  $\lim_{t\rightarrow +\infty} f(x+td) =+\infty$, we then have $\lim_{t\rightarrow +\infty} f(x+td) +\frac{1}{2} \|A(x+td)-b\|^2 =+\infty$, $\forall d\in \kersf A$. If $d\notin \kersf A$, then $d_\rsf \ne 0$. As $t\rightarrow +\infty$, $\frac{1}{2}\|tAd\|^2 = \frac{t^2}{2}\|Ad_\rsf\|^2$ dominates $\frac{1}{2}\|A(\cdot +td)-b\|^2$ and further $f(\cdot+td)+\frac{1}{2}\|A(\cdot +td)-b\|^2$. Thus, 
 $\lim_{t\rightarrow +\infty} f(x+td) +\frac{1}{2}
\|A(x+td)-b\|^2 =+\infty$, $\forall d\notin\kersf A$. Combining both cases yields (i). 

(vi)$\Longleftrightarrow$(vii): Both directions are proved by contradiciton. First, we assume that $(\cR_f \mcap \kersf A) \backslash \{0\} \ne \varnothing$, then $\exists d_0 \ne 0$, such that $d_0\in \cR_f \mcap \kersf A$. This indicates, by Lemma \ref{l_cmp}-(i),  that $d_0\in\kersf A$, such that $f_{\infty}(d_0) \le 0$.  Conversely, we assume by contradiction that $\exists d_0 \in \kersf A\backslash \{0\}$, such that $f_{\infty}(d_0) \le 0$. It implies that $0\ne d_0 \in \cR_f\mcap \kersf A$, by Lemma \ref{l_cmp}-(i). 

(vii)$\Longleftrightarrow$(viii): Lemma \ref{l_cmp}-(iii), which also shows that the condition of $f$ being bounded below is necessary for (viii). 

(ix) Combine (vii) with  Lemma \ref{l_cmp}-(v). 
 \end{proof}

Regarding a recent result of \cite[Prop. 3.1-(a)]{lasso_reload}, we stress that the condition of $f_\infty(d)>0$, $\forall d\in \kersf A\backslash \{0\}$ is sufficient but not necessary for solution existence of \eqref{p1}.  Indeed, if this condition is violated, the solution set may still be non-empty, but unbounded. Considering the case of  $f:\R^2\mapsto \R: (x_1,x_2)\mapsto \max\{0,x_1\}$, $A=\begin{bmatrix}
0 & 1 \end{bmatrix}$, we obtain $f_{\infty}(d_1,d_2) = 0$, whenever $(d_1,d_2)\in (-\infty,0)\times \{0\}\subset \kersf A$. This condition is not fulfilled, but $X=(-\infty, 0] \times \{1\} \ne\varnothing$ (unbounded). 

Regarding the condition of $\kersf f_{\infty} \mcap \kersf A=\{0\}$, 
(ix) extends (viii) to a milder assumption that  $\displaystyle \inf_{\substack{ t>0    \\ d \in \kersf A } }  f(x+td)>-\infty$, $\forall x\in\domsf f$. This does not require $\inf f>-\infty$. Moreover, this condition was also mentioned in \cite[Sect. II-A]{fadili_2013} for $f$ being a decomposible norm (always non-negative). For this condition,  we further have the following
\begin{corollary} \label{c_cmp_p1}
To proceed with Proposition \ref{p_cmp}-(viii), the following hold. 

{\rm (i)} If $X$ is non-empty and compact, then $\kersf f_{\infty}\mcap \kersf A = \{0\}$. 

{\rm (ii)} If $\kersf f_{\infty}\mcap \kersf A = \{0\}$,  $X$ is compact. 

{\rm (iii)} $X=\varnothing$, if  $\displaystyle \inf_{\substack{ t>0    \\ d \in \kersf A } }  f(x+td) = -\infty$, $\forall x\in\domsf f$. 

{\rm (iv)} $\displaystyle \inf_{\substack{ t>0    \\ d \in \kersf A } }  f(x+td) > -\infty$ ($\forall x\in\domsf f$), if $X\ne \varnothing$. 
\end{corollary}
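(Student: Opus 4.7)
The plan is to establish (i) and (iv) directly from the characterizations already collected in Proposition \ref{p_cmp} and Lemma \ref{l_cmp}, to deduce (iii) from (iv), and to handle (ii) via a case split that exploits optimality to recover the positivity of $f_{\infty}$ on $\kersf A$ without invoking $\inf f>-\infty$.

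For (i), the inclusion $\kersf f_{\infty}\subseteq \cR_f$ from Lemma \ref{l_cmp}-(iii) combined with $\cR_f\mcap\kersf A=\{0\}$ from Proposition \ref{p_cmp}-(vii) immediately yields $\kersf f_{\infty}\mcap\kersf A\subseteq\{0\}$; equality follows since $f_{\infty}(0)=0$ places $0$ in both kernels.

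For (iv), the key observation is that every $d\in\kersf A$ satisfies $A(x+td)=Ax$, so the quadratic part of the objective evaluated at $x+td$ equals $\tfrac{1}{2}\|Ax-b\|^2$, independent of $t$ and $d$. Letting $v^{\star}$ denote the optimal value of \eqref{p1}, which is finite because $X\ne\varnothing$, the bound $v^{\star}\le f(x+td)+\tfrac{1}{2}\|Ax-b\|^2$ delivers the uniform lower estimate $f(x+td)\ge v^{\star}-\tfrac{1}{2}\|Ax-b\|^2 >-\infty$ for every $x\in\domsf f$, $d\in\kersf A$, $t>0$, which is exactly (iv). Part (iii) is then immediate: were its hypothesis to hold but $X$ non-empty, (iv) applied at any $x\in\domsf f$ would contradict the standing infimum being $-\infty$.

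The main technical step is (ii). If $X=\varnothing$ there is nothing to prove, so suppose $x^{\star}\in X$. Since $A(x^{\star}+td)=Ax^{\star}$ for every $d\in\kersf A$, optimality of $x^{\star}$ forces $f(x^{\star}+td)\ge f(x^{\star})$ for all $t>0$; dividing by $t$ and passing to the limit in the defining quotient of the recession function gives $f_{\infty}(d)\ge 0$ for every $d\in\kersf A$. Combined with $\cR_f=\{d:f_{\infty}(d)\le 0\}$ from Lemma \ref{l_cmp}-(ii), this collapses $\cR_f\mcap\kersf A$ to $\{d\in\kersf A:f_{\infty}(d)=0\}=\kersf f_{\infty}\mcap\kersf A$. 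Hence the hypothesis $\kersf f_{\infty}\mcap\kersf A=\{0\}$ translates into $\cR_f\mcap\kersf A=\{0\}$, and Proposition \ref{p_cmp}-(vii) finishes the argument (yielding, in fact, non-emptiness as a bonus). The delicate point is precisely this reconciliation of $\kersf f_{\infty}$ with $\cR_f$ in the absence of a global lower bound on $f$; optimality of $x^{\star}$ supplies exactly the bit of non-negativity of $f_{\infty}$ on $\kersf A$ that is needed to carry it out.
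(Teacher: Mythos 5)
Your proof is correct, and for the two substantive parts it takes a genuinely different (and more explicit) route than the paper. For (i) you argue exactly as the paper does, via $\kersf f_{\infty}\subseteq \cR_f$ and Proposition \ref{p_cmp}-(vii). For (iii)--(iv) the paper merely declares the claims ``clear by contradiction,'' whereas you prove (iv) directly from the finiteness of the optimal value together with the invariance of $\tfrac{1}{2}\|A(x+td)-b\|^2$ along $\kersf A$, and then read (iii) off as its contrapositive; this is a welcome concretization. The real divergence is in (ii): the paper splits on the sign of $f_{\infty}$ over $\kersf A\setminus\{0\}$ (some direction with $f_{\infty}<0$ forces the objective to be unbounded below, hence $X=\varnothing$, trivially compact; otherwise $f_{\infty}>0$ on $\kersf A\setminus\{0\}$ and Proposition \ref{p_cmp}-(vi) applies), while you split on whether $X$ is empty and, in the non-empty case, use optimality of $x^{\star}$ along $\kersf A$ to obtain $f_{\infty}\ge 0$ on $\kersf A$, which collapses $\cR_f\mcap\kersf A$ to $\kersf f_{\infty}\mcap\kersf A$ and hands the conclusion to Proposition \ref{p_cmp}-(vii). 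Your version buys a cleaner logical structure: it avoids the paper's slightly loose claim that $f_{\infty}(d_0)<0$ makes ``$f(x+td_0)=-\infty$ for some $x$'' (what is meant there is $f(x+td_0)\to-\infty$ as $t\to+\infty$), and it makes visible exactly where optimality substitutes for the global bound $\inf f>-\infty$ assumed in Proposition \ref{p_cmp}-(viii). The paper's version, in exchange, never needs to presuppose a minimizer and dispatches the unbounded-below case in one line. Both are valid; no gaps.
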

\begin{proof}
(i)  If $X$ is non-empty and compact, then $\cR_f\mcap \kersf A=\{0\}$ by Proposition \ref{p_cmp}-(vii). Thus,  $\kersf f_{\infty}\mcap \kersf A = \{0\}$ by Lemma \ref{l_cmp}-(ii). 

(ii) If $\kersf f_{\infty}\mcap \kersf A = \{0\}$, it implies that $f_{\infty}(d) \ne 0$, $\forall d\in \kersf A\backslash\{0\}$. If $\exists 0\ne d_0\in \kersf A$ such that $f_{\infty}(d) <0$, it follows from definition of $f_{\infty}$ that $f(x+td_0)=-\infty$ for some $x\in\domsf f$,  which further indicates that $\inf (f+\frac{1}{2}\|A\cdot - b\|^2) = -\infty$ and $X=\varnothing$, which is (trivially) compact. On the other hand, if $f_{\infty}(d)>0$, $\forall d\in \kersf A\backslash\{0\}$, $X$ is compact (and non-empty) by Proposition \ref{p_cmp}-(vi).

(iii)-(iv) clear by contradiction. Indeed, (iv) is contrapositive of (iii).
\end{proof}

The converse statement of  Corollary \ref{c_cmp_p1}-(ii) is not true.  Considering  $f: (0,+\infty) \times \R \mapsto \R: (x_1,x_2)\mapsto -\log x_1$, $A=\begin{bmatrix}
0 & 1 \end{bmatrix}$ for example, $X=\varnothing$ is compact, but $\kersf f_{\infty}\mcap \kersf A = [0, +\infty)\times \{0\} \ne \{(0,0) \}$. This shows that the compactness of $X$ and  $\kersf f_{\infty}\mcap \kersf A = \{0\}$ are not equivalent.

The solution existence is not connected to either $\inf f=-\infty$ or $\inf f>-\infty$, e.g., $f(x)=x$ or $e^x$, both of which have solutions. However, Corollary \ref{c_cmp_p1}-(iii) states the relation between solution existence and infimum over $\kersf A$. The negation and converse of Corollary \ref{c_cmp_p1}-(iii) are not true, i.e., $\displaystyle \inf_{\substack{ t>0    \\ d \in \kersf A } }  f(x+td) > -\infty$ does not necessarily imply $X\ne\varnothing$, and $X=\varnothing$ does not necessarily also imply $\displaystyle \inf_{\substack{ t>0    \\ d \in \kersf A } }  f(x+td) = -\infty$.
 Taking $f: \R^2 \mapsto \R: (x_1,x_2)\mapsto e^{x_1}$, $A=\begin{bmatrix}
0 & 1 \end{bmatrix}$ for example, we have $\displaystyle \inf_{\substack{ t>0    \\ d \in \kersf A } }  f(x+td) =0 > -\infty$, and $X=\varnothing$.

We now show that without the condition of $\displaystyle \inf_{\substack{ t>0    \\ d \in \kersf A } }  f(x+td)>-\infty$, $\forall x\in\domsf f$, $ \kersf f_{\infty}\mcap \kersf A = \{0\}$ is no longer a sufficient and necessary condition for solution existence and compactness. What is more, it cannot guarantee solution existence. Considering  $f: \R^2 \mapsto \R: (x_1,x_2)\mapsto x_1$, $A=\begin{bmatrix} 0 & 1 \end{bmatrix}$, we obtain $f_{\infty}=f$, and $ \kersf f_{\infty}\mcap \kersf A = \{0\}$ is satisfied, but $X= \varnothing$. This is because  $\displaystyle \inf_{\substack{ t>0    \\ d \in \kersf A } }  f(x+td) = -\infty$ ($\forall x\in\domsf f$), which violates this condition.

The main drawback with the existing results is that the existence and compactness are bundled together, such that one cannot determine whether the solution set is empty or non-compact (but non-empty), if conditions are violated. Considering  $A=\begin{bmatrix}
0 & 1 \end{bmatrix}$  and taking $f_1: \R^2\mapsto\R: (x_1,x_2)\mapsto e^{x_1}$  and  $f_2: \R^2\mapsto\R: (x_1,x_2)\mapsto \max\{0,x_1\}$ respectively, both examples satisfy $\inf f >-\infty$ and $\kersf f_{\infty}\mcap \kersf A =  (-\infty, 0] \times \{0 \} \ne \{ (0,0) \} $. However,  the solution set of $f_1$ is empty, but that of $f_2$ is non-empty but unbounded. Thus, it is essential to discriminate the solution existence from compactness, both the above examples are suitably handled by our results (see Sect. \ref{sec_eg_p1}).

\subsubsection{Restricted coercivity}
In this sequel, we further explore restricted coercivity. In particular, if $\kersf A=\{0\}$, the condition of $f_{\infty} (d)>0$ ($\forall d\in \kersf A\backslash\{0\}$) is vacuously true, and this reduces to the ordinary coercivity \cite[Definition 3.1.1]{teboulle_book}.  We here focus on the case that $A$ has a non-trivial kernel.  The following important theorem reveals what the restricted coercivity implies for the domain of the conjugate function.
\begin{theorem} \label{t_coercivity}
If $f$ is coercive on $\kersf A$, then the following hold. 

{\rm (i)} $B_\alpha(0)  \subseteq \ransf A^\top + (B_\alpha(0) \mcap \kersf A) \subseteq \domsf (f(x_0+\cdot)|_{\kersf A})^*$, $\forall x_0\in\domsf f$;

{\rm (ii)} $0\in \intsf\ \domsf (f(x_0+\cdot)|_{\kersf A})^*$, $\forall x_0\in\domsf f$;

{\rm (iii)} If $0\in \domsf f$, $B_\alpha(0) \subseteq \domsf (f|_{\kersf A})^*$, $0\in \intsf\ \domsf (f|_{\kersf A})^*$;

{\rm (iv)} $ (f_{\infty}) |_{\kersf A}  = \sigma_{ \cP_{\kersf A} (\domsf f^*)}|_{\kersf A}  = \sigma_{\domsf f^*} |_{\kersf A}$;

{\rm (v)}  $0\in \risf\ \cP_{\kersf A} (\domsf f^*) $. 
\end{theorem}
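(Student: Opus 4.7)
The plan is to use the linear lower bound from Lemma~\ref{l_coercive}-(iii), namely $f(x) \ge \alpha \|\cP_{\kersf A} x\| + \beta$ for some $\alpha > 0$ and $\beta \in \R$, as the single driving estimate throughout all five parts.

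For part~(i), the inclusion $B_\alpha(0) \subseteq \ransf A^\top + (B_\alpha(0) \mcap \kersf A)$ follows immediately from the orthogonal decomposition $y = y_\rsf + y_\ksf$, since $\|y_\ksf\| = \|\cP_{\kersf A} y\| \le \|y\| \le \alpha$. For the second inclusion, I would unfold $(f(x_0 + \cdot)|_{\kersf A})^*(y) = \sup_{d \in \kersf A} \langle y, d\rangle - f(x_0 + d) = \sup_{d \in \kersf A} \langle y_\ksf, d\rangle - f(x_0 + d)$, since $\langle y_\rsf, d\rangle = 0$ whenever $d \in \kersf A$. Combining $\|y_\ksf\| \le \alpha$, the Cauchy--Schwarz inequality, the lower bound on $f$, and the reverse triangle inequality $\|\cP_{\kersf A}(x_0 + d)\| \ge \|d\| - \|\cP_{\kersf A} x_0\|$, the $\alpha \|d\|$ terms cancel to leave a finite estimate of order $\alpha \|\cP_{\kersf A} x_0\| - \beta$, placing $y$ in $\domsf (f(x_0 + \cdot)|_{\kersf A})^*$.

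Parts~(ii) and~(iii) then follow mechanically: (ii) because (i) exhibits an open ball around $0$ inside the domain, and (iii) by specializing $x_0 = 0$, which is admissible under the hypothesis $0 \in \domsf f$. For part~(iv), the plan is to invoke the classical duality identity $f_\infty = \sigma_{\domsf f^*}$ valid for any $f \in \Gamma_0(\R^n)$. Restricting to $d \in \kersf A$ and using $\langle y, d\rangle = \langle \cP_{\kersf A} y, d\rangle$ rewrites the supremum over $\domsf f^*$ as a supremum over $\cP_{\kersf A}(\domsf f^*)$, yielding both equalities simultaneously.

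The main obstacle lies in part~(v), which requires transferring the $\R^n$-interior statement of (i) into a \emph{relative} interior statement for the projected set $\cP_{\kersf A}(\domsf f^*)$. The bridge is the identification $\cP_{\kersf A}(\domsf f^*) = \domsf h^* \mcap \kersf A$, where $h := f(x_0 + \cdot) + \iota_{\kersf A}$ for some $x_0 \in \domsf f$. Computing $h^*$ as the inf-convolution of $(f(x_0+\cdot))^* = f^* - \langle \cdot, x_0\rangle$ with $\sigma_{\kersf A} = \iota_{\ransf A^\top}$ gives $\domsf h^* = \domsf f^* + \ransf A^\top$; intersecting with $\kersf A$ and invoking the orthogonal decomposition $\ransf A^\top \oplus \kersf A = \R^n$ yields the identification. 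Part~(i) then delivers $B_\alpha(0) \mcap \kersf A \subseteq \cP_{\kersf A}(\domsf f^*)$, which forces $\affsf \cP_{\kersf A}(\domsf f^*) = \kersf A$ and hence $0 \in \risf \cP_{\kersf A}(\domsf f^*)$. The only delicate step is justifying the inf-convolution formula for the conjugate of the sum, which is automatic here because $\iota_{\kersf A}$ is polyhedral, so the standard qualification condition is trivially satisfied.
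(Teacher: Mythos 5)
Your parts (i)--(iv) are essentially the paper's own proof: the same linear minorant $f \ge \alpha\|\cP_{\kersf A}\cdot\|+\beta$ drives the conjugate estimate in (i) (the paper writes it for $g=f(x_0+\cdot)+\iota_{\kersf A}$ directly, you shift via the reverse triangle inequality --- same computation), (ii) and (iii) are the same specializations, and (iv) is verbatim the paper's use of $f_\infty=\sigma_{\domsf f^*}$ together with $\langle y,d\rangle=\langle\cP_{\kersf A}y,d\rangle$ for $d\in\kersf A$.

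Part (v) is where you diverge, and where there is a genuine gap. You need the inclusion $\domsf\bigl(f(x_0+\cdot)|_{\kersf A}\bigr)^*\mcap\kersf A\subseteq\cP_{\kersf A}(\domsf f^*)$, i.e.\ the \emph{nontrivial} direction of $\domsf h^*=\domsf f^*+\ransf A^\top$ (the inclusion $\supseteq$ is free since the conjugate of a sum is always majorized by the infimal convolution of the conjugates). Your justification --- ``automatic because $\iota_{\kersf A}$ is polyhedral'' --- is not correct: the polyhedral refinement of the sum rule (Rockafellar, Cor.~20.1.1) still requires $\kersf A\mcap\risf\bigl(\domsf f-x_0\bigr)\ne\varnothing$, and this can fail under the hypotheses of the theorem; e.g.\ $f(x_1,x_2)=\iota_{\{x_2\ge x_1^2\}}(x)+|x_1|$ with $\kersf A=\R\times\{0\}$ and $x_0=0$ is coercive on $\kersf A$ but $\domsf f$ meets $x_0+\kersf A$ only at the origin, which is not in $\risf\domsf f$. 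Without the qualification one only gets $h^*=\clsf$ of the infimal convolution, so $\domsf h^*\mcap\kersf A$ may exceed $\cP_{\kersf A}(\domsf f^*)$ by boundary points. The argument is salvageable --- $\domsf h^*\subseteq\clsf(\domsf f^*+\ransf A^\top)=\clsf\bigl(\cP_{\kersf A}(\domsf f^*)\bigr)+\ransf A^\top$, so $B_\alpha(0)\mcap\kersf A\subseteq\clsf\bigl(\cP_{\kersf A}(\domsf f^*)\bigr)$, and $\risf C=\risf\,\clsf C$ for convex $C$ finishes it --- but you must say this. The paper avoids the issue entirely: by Lemma~\ref{l_coercive}-(iv) restricted coercivity gives $f_\infty(d)>0$ for all $d\in\kersf A\setminus\{0\}$, which via your own part (iv) reads $\sigma_{\cP_{\kersf A}(\domsf f^*)}(d)>0$ for all such $d$, and positivity of the support function on $\kersf A\setminus\{0\}$ for a convex subset of $\kersf A$ already yields $0\in\risf\,\cP_{\kersf A}(\domsf f^*)$. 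That route is shorter and needs no constraint qualification.
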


\begin {proof}
(i)-(ii) Define function $g$ as $g(d)=f(x_0+d)+\iota_{\{d\in\R^n: Ad=0 \}} (d) = f(x_0+d)+\iota_{\kersf A} (d)$, with any $x_0\in \domsf f$. Thus, $g = g|_{\kersf A} = f(x_0+\cdot) |_{\kersf A} $. By Definition \ref{def_cmp}-(i), for $d\in\kersf A$, we have $\liminf_{\|d\|\rightarrow \infty} \frac{f(x_0+d)}{\|d\|} >0$,
$\forall x_0 \in\domsf f $, i.e., $\liminf_{\|d\|\rightarrow \infty} \frac{g(d)}{\|d\|} >0 $. This implies, by \cite[Proposition 14.16]{plc_book}, that $g$ is coercive on $\kersf A$, which indicates that $\exists \alpha>0$ and $\beta\in\R$, such that $g\ge \alpha \|\cdot\| +\beta$, for any input in $\kersf A$.

Consider $g^* = (g|_{\kersf A})^* =  (f(x_0 +\cdot)|_{\kersf A})^* $ (with any $x_0\in \domsf f$), we have:
\begin{eqnarray}
g^*(u) &=& \sup_{d\in\domsf  g} \langle u|d\rangle
-g(d)  = \sup_{d\in\kersf A} \langle u_\ksf|d\rangle
-g(d)
\nonumber \\
&  \le & \sup_{d\in\kersf A} \|u_\ksf\|\cdot\|d\|
- \alpha \|d\| -\beta  
 = \sup_{d\in\kersf A} (\|u_\ksf\| - \alpha )\cdot 
  \|d\| -\beta.
  \nonumber
\end{eqnarray}
Observe that if $\|u_\ksf\|\le \alpha$, then $g^*(u)$ always has the finite value of $-\beta$. This indicates that $u\in\dom g^*$, as long as $\|u_\ksf\|\le \alpha$. This has no restriction for $u_\rsf$. Formally, we write
\[
B_\alpha(0)  \subseteq \ransf A^\top +(B_\alpha(0) \mcap \kersf A) \subseteq \domsf g^*, 
\]
where $B_\alpha(0) $ is a ball of radius $\alpha$, centered at 0. In particular, $\ransf A^\top \subseteq \domsf g^*$, and $0\in \intsf\ \domsf g^*
=\intsf\ \domsf (f(x_0+\cdot)|_{\kersf A})^*$, $\forall x_0\in\domsf f$.

(iii) take $x_0=0$ in (i) and (ii).

(iv) By a basic result of $f_{\infty} = \sigma_{\domsf  f^*}$ \cite[Theorem 2.5.4]{teboulle_book}, we develop:
\[
(f_{\infty}|_{\kersf A} ) (d) 
= (\sigma_{\domsf f^*} |_{\kersf A} ) (d) 
=  \sigma_{\domsf f^*}  (d), \quad \forall d\in \kersf A.
\]
For $d\in \kersf A$, it further becomes:
\[
 \sigma_{\domsf f^*}  (d)
 = \sup_{u\in \domsf f^*} \langle u|d\rangle 
 = \sup_{u_\ksf \in \cP_{\kersf A} (\domsf f^*) } 
 \langle u_\ksf|d\rangle 
= \big(\sigma_{ \cP_{\kersf A} (\domsf f^*)}|_{\kersf A}
\big) (d).
\]

(v) The condition of $f_{\infty} (d)>0$, $\forall d\in\kersf A\backslash\{0\}$ is (trivially) equivalent to $(f_{\infty})|_{\kersf A} (d) >0$, $\forall d\ne 0$. By (iii),  we obtain equivalently $\sigma_{ \cP_{\kersf A} (\domsf f^*)} (d) > 0$, $\forall d\ne 0$. This implies $0\in \risf\ \cP_{\kersf A} (\domsf f^*) $, which also indicates that $\exists \alpha>0$, such that $B_\alpha(0) \mcap \kersf A \subseteq \cP_{\kersf A} (\domsf f^*)$. In other words, for every $v\in \kersf A$, there always exist $\epsilon>0$ and $u\in \ransf A^\top$,  such that $ \epsilon v + u \in\domsf f^*$\footnote{More precisely, $\epsilon$ can be chosen as $\epsilon\in (0, \frac{\alpha}{\|v\|})$.}.
\end{proof}

Theorem \ref{t_coercivity}-(iii) and (iv) are extensions of the result of \cite[Theorem 1.3.2]{teboulle_book}. The proof of (i)  implicitly used the basic rule of   \cite[Proposition 13.23]{plc_book} that 
$g^* = (g|_{\kersf A})^* = (g|_{\kersf A})^* \circ \cP_{\kersf A}$.

Interestingly, the condition of $f_{\infty}(d)>0$  ($\forall d\in\kersf A\backslash\{0\}$) can also be easily obtained by Theorem \ref{t_coercivity}. Indeed, this condition can be rewritten as $(f(x_0+\cdot)|_{\kersf A} )_{\infty} (d) >0$, $\forall d\ne 0$, for some $x_0\in\domsf f$. By a basic result of $f_{\infty} = \sigma_{\domsf  f^*}$ \cite[Theorem 2.5.4]{teboulle_book}, we develop:
\[
(f(x_0+\cdot)|_{\kersf A})_{\infty} (d) 
=\sigma_{\domsf (f(x_0+\cdot)|_{\kersf A})^*} (d) 
\ge \sup_{u\in B_\alpha(0) \cap \kersf A} 
\langle  u|d\rangle >0,
\]
where the above inequalities are due to 
$B_\alpha(0) \mcap \kersf A \subseteq \domsf 
(f(x_0+\cdot)|_{\kersf A})^*$, and  $0\in  \intsf\ \domsf (f(x_0+\cdot)|_{\kersf A})^*$.

\subsubsection{Connections between existing results and ours}
By Propositions \ref{p_cmp_p1} and \ref{p_cmp}, we have individually shown that both 
$\big(  \partial f^* (   A^\top  r)   \big)_{\infty} \mcap \kersf A =\{0\}$ and  $\cR_f  \mcap \kersf A =\{0\}$ are sufficient and necessary conditions for solution existence and compactness. Can we directly show their mutual equivalence without resorting to the solution compactness?  This question is key to understand the connections between the existing results and ours. The answer lies in Theorem \ref{t_connect}. Before this, we put the following key lemma, which directly connects the recession cone of $\partial f^*$ to recession function of $f$.
\begin{lemma} \label{l_connect}
$f_{\infty}$ and $  \big(  \partial f^* (   A^\top  r)   \big)_{\infty}$ are connected via the following relations:

{\rm (i)} $ f_{\infty} (d)  \left\{ \begin{array}{ll}
=\big\langle A^\top r | d \big\rangle, & \textrm{if\ }
   d \in  \big(  \partial f^* (   A^\top  r)   \big)_{\infty}; \\
> \big\langle A^\top r | d \big\rangle, &
\textrm{if\ } d \notin  \big(  \partial f^* (   A^\top  r)   \big)_{\infty}.
\end{array} \right. $

{\rm (ii)} $f_{\infty}(d) = \big\langle A^\top r | d \big\rangle \Longleftrightarrow 
  d \in  \big(  \partial f^* (   A^\top  r)   \big)_{\infty}$.
 \end{lemma}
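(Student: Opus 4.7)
\textbf{Proof plan for Lemma \ref{l_connect}.}

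Part (ii) clearly reduces to the sharp half of (i), so I will prove (i) and deduce (ii). The starting observation is that optimality of $x^\star$ yields $A^\top r\in\partial f(x^\star)\subseteq \ran\partial f = \dom\partial f^*\subseteq \dom f^*$. Combined with the standard representation $f_{\infty}=\sigma_{\dom f^*}$ (cited in the excerpt via \cite[Theorem 2.5.4]{teboulle_book}), this immediately gives the one-sided inequality
$$
  f_{\infty}(d) \;=\; \sup_{u\in\dom f^*}\langle u\mid d\rangle \;\ge\; \langle A^\top r\mid d\rangle,\qquad \forall d\in\R^n,
$$
which handles both branches of (i) up to the question of when equality holds.

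The heart of the argument is to identify $\partial f^*(A^\top r)$ as the argmin set of an auxiliary function. Define $h:=f-\langle A^\top r\mid\cdot\rangle\in\Gamma_0(\R^n)$. By Fenchel--Young, $y\in\partial f^*(A^\top r)$ iff $f(y)+f^*(A^\top r)=\langle A^\top r\mid y\rangle$, i.e.\ iff $y$ minimizes $h$. Since $x^\star$ is such a minimizer, $\inf h>-\infty$, so Lemma \ref{l_cmp}-(iv) gives $h_{\infty}\ge 0$ and $\ker h_{\infty}=\cR_h$. Writing the argmin set as the sublevel set $\slevsf_{x^\star} h$, Lemma \ref{l_cmp}-(iii) then yields
$$
  \bigl(\partial f^*(A^\top r)\bigr)_{\infty} \;=\; \bigl(\slevsf_{x^\star} h\bigr)_{\infty} \;=\; \cR_h \;=\; \ker h_{\infty}.
$$

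Finally, since $\langle A^\top r\mid\cdot\rangle$ is linear, its recession function equals itself, and a direct difference-quotient computation gives $h_{\infty}(d)=f_{\infty}(d)-\langle A^\top r\mid d\rangle$. Plugging this into the above characterization, $d\in(\partial f^*(A^\top r))_{\infty}$ iff $f_{\infty}(d)=\langle A^\top r\mid d\rangle$. Combined with the global inequality $f_{\infty}(d)\ge\langle A^\top r\mid d\rangle$, this simultaneously delivers both cases of (i), and (ii) is simply a restatement. The only delicate point is the identification $\partial f^*(A^\top r)=\Arg\min h$ and the attendant application of the sublevel-set recession calculus; once these are in place, the rest is a short chain of equalities.
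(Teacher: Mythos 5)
Your proof is correct, but it takes a genuinely different route from the paper's. The paper argues directly along rays: for $d\in\big(\partial f^*(A^\top r)\big)_{\infty}$ it uses that $A^\top r$ remains a subgradient of $f$ at every point $x^\star+td$, and sandwiches the difference quotient from both sides via the convexity inequality; for $d$ outside the recession cone it locates a $t_0$ with $A^\top r\notin\partial f(x_0+t_0d)$ and extracts a strict inequality (a step that actually needs a small extra argument to justify evaluating the violated subgradient inequality at the specific point $x_0$, though it does go through). You instead tilt the objective, setting $h=f-\langle A^\top r\mid\cdot\rangle$, identify $\partial f^*(A^\top r)=\Arg\min h=\slevsf_{x^\star}h$ via Fenchel--Young, and then read off $\big(\partial f^*(A^\top r)\big)_{\infty}=\cR_h=\kersf h_{\infty}$ from the paper's own Lemma \ref{l_cmp}-(iii),(iv) together with $h_{\infty}=f_{\infty}-\langle A^\top r\mid\cdot\rangle$ and $h_{\infty}\ge 0$. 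Each step checks out: $h\in\Gamma_0(\R^n)$, $\inf h=h(x^\star)>-\infty$, and the sublevel set at the minimizer is exactly the argmin set. What your approach buys is brevity and transparency --- the lemma becomes the single identity $\kersf h_{\infty}=(\Arg\min h)_{\infty}$ for the tilted function, the global inequality $f_{\infty}\ge\langle A^\top r\mid\cdot\rangle$ comes for free, and the delicate strict-inequality case disappears; what it costs is self-containedness, since it leans on Lemma \ref{l_cmp} and the representation $f_{\infty}=\sigma_{\domsf f^*}$, whereas the paper's argument uses only the definition of the subdifferential and of the recession cone.
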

\begin{proof}
(i) Let $d \in  \big(  \partial f^* (   A^\top  r)   \big)_{\infty}$, then $
x^\star+td \in   \partial f^* (   A^\top  r)$, $\forall t>0$, since $x^\star\in  \partial f^* (   A^\top  r)$. This implies that  $A^\top r \in \partial f(x^\star+td)$, $\forall t>0$. By convexity of $f$, we have
$f(x^\star)  \ge f(x^\star+td) -t \langle A^\top r | d\rangle$. This is
$\langle A^\top r | d\rangle \ge \frac{ f(x^\star+td)-f(x^\star) } {t}$,  $\forall t>0 $. Taking supremum over $t>0$ yields
$ \langle A^\top r | d\rangle \ge 
\sup_{t>0}  \frac{ f(x^\star+td)-f(x^\star) } {t} 
= f_{\infty} (d)$ by \cite[Proposition 2.4-(i)]{vaiter_thesis}.

On the other hand, by $A^\top r\in\partial f(x^\star)$ and convexity of $f$, we have $f(x^\star+td)  \ge  f(x^\star) +t \langle A^\top r | d\rangle $. It is $ \langle A^\top r | d\rangle \le \frac{ f(x^\star+td)-f(x^\star) } {t}$,  $\forall t>0 $. Taking supremum over $t>0$ yields
$ \langle A^\top r | d\rangle \le \sup_{t>0}
 \frac{ f(x^\star+td)-f(x^\star) } {t} = f_{\infty} (d)$. Combining both sides proves the first line of (i).

If $d\notin \big(  \partial f^* (   A^\top  r)   \big)_{\infty}$, then   $\forall x_0  \in \partial f^*(A^\top r)$\footnote{Of course, a typical choice is $x_0=x^\star \in \partial f^*(A^\top r)$ by \eqref{a1}.},  $\exists t_0>0$, such that $x_0 +t_0 d\notin\partial f^*(A^\top r)$ and then
 $A^\top r \notin\partial f (x_0 +t_0 d)$. By definition of subdifferential, we have $f(x_0 ) < f(x_0  +t_0 d) +\langle A^\top r | -t_0d \rangle$. It is $\langle A^\top r | d \rangle  < \frac{ f(x_0 +t_0 d) -f(x_0 )} {t_0 }
\le f_{\infty} (d)$, which shows the second line of (i).

(ii) is an immediate consequence of (i).
\end{proof}

\vskip.1cm
Then, we give the following important result for the direct connection.
\begin{theorem} \label{t_connect}
For the problem \eqref{p1}, the following equalities hold.
\[
\big(  \partial f^* (   A^\top  r)   \big)_{\infty} \mcap \kersf A 
= \kersf f_{\infty} \mcap \kersf A
= \cR_f  \mcap \kersf A 
=(\slevsf_{x^\star} f )_{\infty} \mcap \kersf A.
\]
\end{theorem}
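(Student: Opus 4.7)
The plan is to chain the four sets together via two short arguments, using the solution-dependent quantity $A^\top r$ as a bridge between the conjugate-side object $\partial f^*(A^\top r)$ and the primal-side objects $\cR_f$, $\ker f_\infty$, and $(\slevsf_{x^\star}f)_\infty$. The crucial observation is that for any $d \in \ker A$ we have $\langle A^\top r\mid d\rangle = \langle r\mid Ad\rangle = 0$, so the linear term appearing in Lemma \ref{l_connect} vanishes identically on $\ker A$.

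\textbf{Step 1: identify $\bigl(\partial f^*(A^\top r)\bigr)_\infty \cap \ker A$ with $\ker f_\infty \cap \ker A$.} I would apply Lemma \ref{l_connect}-(ii): a direction $d$ lies in $\bigl(\partial f^*(A^\top r)\bigr)_\infty$ iff $f_\infty(d) = \langle A^\top r\mid d\rangle$. Restricting to $d \in \ker A$, the right-hand side is $0$, so membership reduces to $f_\infty(d) = 0$, i.e.\ $d \in \ker f_\infty$. This yields the first equality.

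\textbf{Step 2: identify $\ker f_\infty \cap \ker A$ with $\cR_f \cap \ker A$.} One inclusion is free from Lemma \ref{l_cmp}-(iii), which gives $\ker f_\infty \subseteq \cR_f$. For the reverse, take $d \in \cR_f \cap \ker A$, so $f_\infty(d) \leq 0$ by Lemma \ref{l_cmp}-(ii). The key is to produce the opposite inequality using optimality of $x^\star$: since $d \in \ker A$, we have $A(x^\star + td) = Ax^\star$, and the objective at $x^\star + td$ equals $f(x^\star+td) + \tfrac12\|r\|^2$; minimality of $x^\star$ forces $f(x^\star+td) \geq f(x^\star)$ for all $t > 0$, whence $f_\infty(d) = \sup_{t>0}\tfrac{f(x^\star+td)-f(x^\star)}{t} \geq 0$. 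Combining, $f_\infty(d) = 0$, so $d \in \ker f_\infty$.

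\textbf{Step 3: identify $\cR_f \cap \ker A$ with $(\slevsf_{x^\star}f)_\infty \cap \ker A$.} This is immediate from the equality $\cR_f = (\slevsf_{x_0} f)_\infty$ for every $x_0 \in \dom f$, already established in Lemma \ref{l_cmp}-(iii); simply specialize to $x_0 = x^\star$ and intersect with $\ker A$.

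The main (mildly subtle) obstacle is Step 2: one has to remember that $f_\infty$ can in principle take negative values on general directions, and the pointwise lower bound $f_\infty(d) \geq 0$ holds only on $\ker A$ and only because we are at a genuine minimizer $x^\star$. This is precisely the place where the assumption $X \neq \varnothing$ (implicit in the statement through the use of $x^\star$ and $r$) enters, and it is what glues the conjugate-side characterization to the recession-side characterizations.
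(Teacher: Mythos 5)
Your proof is correct and follows essentially the same route as the paper: Lemma \ref{l_connect} combined with $\langle A^\top r\mid d\rangle=0$ on $\kersf A$ gives the first equality, and Lemma \ref{l_cmp} handles the rest. The only (harmless) deviation is in your Step 2, where you derive $f_\infty(d)\ge 0$ for $d\in\kersf A$ directly from the optimality of $x^\star$, whereas the paper reaches the same inequality by invoking Corollary \ref{c_cmp_p1}-(iv) to verify the hypothesis of Lemma \ref{l_cmp}-(v); both rest on the same observation that the objective cannot decrease along kernel directions at a minimizer.
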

\begin{proof}
First, when writing the notation of $\partial f^* (   A^\top  r) $, we have implicitly admitted the existence of $r$ (and $x_\rsf^\star$) and viability of $\partial f^* \circ A^\top$, which further implies that the solution set $X\ne\varnothing$ by Lemma \ref{l_X_p1}. It also indicates in turn by Corollary \ref{c_cmp_p1}-(iv) that $\displaystyle \inf_{\substack{ t>0    \\ d \in \kersf A } }  f(x+td)>-\infty$, $\forall x\in\domsf f$. Then, the second equality follows by Lemma \ref{l_cmp}-(v), and the last equality is due to Lemma \ref{l_cmp}-(iii).

Regarding the first equality, since $\kersf A$ appears in each side of the equality, we in this proof always assume $d\in \kersf A$. First, if $d\in \big(  \partial f^* (   A^\top  r)   \big)_{\infty}$, the first line of Lemma \ref{l_connect}-(i) shows  $f_{\infty}(d) =  \langle A^\top r | d \rangle = 0$, since $d\in\kersf A$. This shows $d\in \kersf f_{\infty}$. And thus, $\big(  \partial f^* (   A^\top  r)   \big)_{\infty} \mcap \kersf A  \subseteq \kersf f_{\infty} \mcap \kersf A $.

On the contrary, if $d\notin \big(  \partial f^* (   A^\top  r)   \big)_{\infty}$, the second line of Lemma \ref{l_connect}-(i) shows  $f_{\infty}(d) >  \langle r|Ad\rangle =0$, since $d\in \kersf A$, and thus $d\notin \kersf f_{\infty}$. This shows that $\kersf A\backslash \partial f^* (   A^\top  r)   \big)_{\infty} \subseteq \kersf A\backslash \kersf f_{\infty}$. Combining both sides proves the first equality. 
\end{proof}

Thus, it is easy to show the direct equivalence of $\big(  \partial f^* (   A^\top  r)   \big)_{\infty} \mcap \kersf A =\{0\}$ to the existing results listed in Proposition \ref{p_cmp}, without using the solution uniqueness.  Last, we present another less important but interesting result.
\begin{corollary}
$\big(  \partial f^* (   A^\top  r)   \big)_{\infty} \mcap \kersf A 
\subseteq 
\big(  \partial f^* (   A^\top  r)   \big)_{\infty} \mcap \kersf f_{\infty}$, where the inclusion cannot be replaced by equality, unless $\rank A=1$.
\end{corollary}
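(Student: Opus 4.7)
The plan is to split the corollary into two claims: the inclusion itself, which is an easy byproduct of Lemma \ref{l_connect}; and the impossibility of equality beyond rank one, which requires a geometric argument on $\operatorname{ran} A$.

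For the inclusion, I would take any $d \in \big(\partial f^*(A^\top r)\big)_\infty \cap \kersf A$ and invoke Lemma \ref{l_connect}-(ii) directly: membership in $\big(\partial f^*(A^\top r)\big)_\infty$ forces $f_\infty(d) = \langle A^\top r, d\rangle = \langle r, Ad\rangle$, and $d \in \kersf A$ collapses the right-hand side to $0$. Hence $f_\infty(d)=0$, i.e.\ $d \in \kersf f_\infty$, which places $d$ in the desired right-hand set. This essentially recycles the first half of the proof of Theorem \ref{t_connect} and can be written in two lines.

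For the strictness, I would reverse the reasoning. Given $d$ in the right-hand set, Lemma \ref{l_connect}-(ii) combined with $f_\infty(d)=0$ yields $\langle r, Ad\rangle = 0$, so only $Ad \perp r$ is enforced, not $Ad=0$. Thus equality holds iff every such $d$ also satisfies $Ad=0$, which is a statement about the interaction between $\operatorname{ran} A$ and $r$. When $\operatorname{rank} A = 1$, write $A = u v^\top$; then $Ad = (v^\top d)\,u$, and $\langle r, Ad\rangle = 0$ forces $v^\top d = 0$ (whenever $\langle r,u\rangle\ne 0$), i.e.\ $Ad=0$, so the inclusion does collapse. When $\operatorname{rank} A \geq 2$, the subspace $\{w \in \operatorname{ran} A : w\perp r\}$ is non-trivial, so there is room to pick a nonzero $w = Ad$ perpendicular to $r$, and one can tailor $f$ (e.g.\ an $\ell_1$-type regularizer on a suitable enlargement of Example \ref{eg_lasso}) so that the corresponding $d$ belongs to $(\partial f^*(A^\top r))_\infty \cap \kersf f_\infty$ but not to $\kersf A$.

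The easy half is purely formal; the obstacle is the second half, where I must both (i) identify the precise geometric criterion ``$\operatorname{ran} A \cap r^\perp = \{0\}$ in $\operatorname{ran} A$'' that characterizes equality, and (ii) actually exhibit a concrete $f$ realizing a strict inclusion when $\operatorname{rank} A \geq 2$, which amounts to arranging the recession cone of $\partial f^*(A^\top r)$ to meet $A^{-1}(r^\perp \cap \operatorname{ran} A)$ nontrivially outside $\kersf A$. A polyhedral example analogous to Example \ref{eg_lasso}, where $\partial f^*(A^\top r)$ is an unbounded orthant and $A$ is chosen with rank two, should suffice.
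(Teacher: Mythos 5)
Your proposal is correct and, for the inclusion, is exactly the paper's argument: take $d$ in the left-hand set, apply Lemma \ref{l_connect} to get $f_{\infty}(d)=\langle r\,|\,Ad\rangle$, and use $d\in\kersf A$ to conclude $f_{\infty}(d)=0$. For the strictness claim your route is slightly more ambitious than the paper's. The paper only observes that the reverse implication fails: for $d$ in the right-hand set one gets $f_{\infty}(d)=\langle A^\top r\,|\,d\rangle=0$, hence merely $d\perp A^\top r$ rather than $d\in\kersf A$, and it stops there without exhibiting a witness. You propose, in addition, to actually construct a rank-two polyhedral example where the inclusion is strict; that is a genuine strengthening, and your sketch (pick $0\ne Ad\in\ransf A$ with $Ad\perp r$ and arrange $\partial f^*(A^\top r)$ to be an unbounded orthant as in Example \ref{eg_lasso}) is plausible, though you would still need to carry it out. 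You also correctly flag the caveat $\langle r,u\rangle\ne 0$ (equivalently $A^\top r\ne 0$) in the rank-one case, which the paper silently assumes: if $A^\top r=0$ the condition $d\perp A^\top r$ is vacuous and the inclusion can be strict even for $\rank A=1$ (e.g.\ $f\equiv 0$ on $\R^2$ with $A=[1\ 0]$), so the paper's ``unless $\rank A=1$'' is loose exactly where you noticed.
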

\begin{proof}
If $d\in \big(  \partial f^* (   A^\top  r)   \big)_{\infty}\mcap \kersf A$,  $f_{\infty}(d) =   \langle r|Ad\rangle = 0$, by the first line of Lemma \ref{l_connect}-(i). This shows $d\in\kersf f_{\infty}$. Conversely, if $d\in \big(  \partial f^* (   A^\top  r)   \big)_{\infty}\mcap \kersf f_{\infty}$,  $f_{\infty}(d) =   \langle A^\top r|d\rangle = 0$. This only shows that $d\perp A^\top r$, but does not necessarily imply $d\in \kersf A$, unless $\rank A =1$. 
\end{proof}

\subsection{Uniqueness}
\subsubsection{Our results}
 The uniqueness of solution is given below.
\begin{proposition} \label{p_unique_p1}
The particular solution $x^\star$ to \eqref{p1} is unique, (i.e., $X=\{x^\star\}$),

{\rm (i)} if and only if $(\partial f^* (   A^\top  r)  -x_\rsf^\star ) \mcap \kersf A$ is a singleton;

{\rm (ii)} if and only if $(\partial f^* (   A^\top  r)  -x^\star ) \mcap \kersf A = \{0\}$;

 {\rm (iii)} if  $\cP_{\kersf A} (\partial f^* (   A^\top  r)  -x^\star ) = \{0\}$, or equivalently, $(\cP_{\kersf A} \circ \partial f^* ) (   A^\top  r) = \{  x_\ksf^\star \}$. Furthermore, this condition is equivalent to $\partial f^*(A^\top r) \perp \kersf A$, and thus $\partial f^*(A^\top r) \subseteq x_\ksf^\star + \ransf A^\top$.
\end{proposition}
\begin{proof}
It has been shown in Corollary \ref{c_unique}-(ii) that $x_\rsf^\star$ is always unique, and the uniqueness depends on $x_\ksf^\star$ only. 

(i) Theorem \ref{t_X_p1}-(ii). Actually, this singleton is the kernel component of this unique solution---$x_k^\star$.

(ii) Theorem \ref{t_X_p1}-(iii).

(iii) Applying $\cP_{\kersf A}$ to both sides of second line of \eqref{a1} yields $  x_\ksf^\star   \in \cP_{\kersf A} \partial f^* \big( A^\top  r\big)$, which leads to (iii). 
\end{proof}

It is stressed that the Proposition \ref{p_unique_p1}-(iii) is sufficient but not necessary condition, by Lemma \ref{l_recession}-(iii), since $(\partial f^* (   A^\top  r)  -x^\star ) \cap \kersf A \subseteq
\cP_{\kersf A} (\partial f^* (   A^\top  r)  -x^\star ) = 
\cP_{\kersf A} (\partial f^* (   A^\top  r) ) -x_\ksf^\star$.  This can be shown by a lasso example of \cite{zh_lasso}.
\begin{example} \label{eg_lasso}
Consider a lasso example \cite{zh_lasso}:
\[
f: \R^3\mapsto\R: x\mapsto \|x\|_1,\quad A=\begin{bmatrix}
1 &0&2 \\  0&2&-2\end{bmatrix},\quad
b=\begin{bmatrix} 1 \\  1 \end{bmatrix}. 
\]  
It has the unique solution $x^\star =\begin{bmatrix}
0  & 1/4  & 0 \end{bmatrix}^\top$, and thus, 
$A^\top r =  \begin{bmatrix}
1 &  1 &  1 \end{bmatrix}^\top$, and
$\partial (\|\cdot\|_1^*) (A^\top r) =[0,+\infty)\times [0,+\infty)\times [0,+\infty)$. Using  $\kersf A = \xi \begin{bmatrix}
-2 &  1 & 1 \end{bmatrix}^\top$, $\forall \xi\in\R$, the uniqueness coincides with $\partial (\|\cdot\|_1^*) (A^\top r) \mcap \kersf A =  \{0\}$, but $\cP_{\kersf A} \big( \partial  (\|\cdot\|_1^*) (A^\top r)
\big) $, representing the projection of positive orthant in $\R^3$ onto the line $\R(-2, 1, 1)$, is clearly multi-valued, and 
Proposition \ref{p_unique_p1}-(iii) is not met.  Furthermore, $(\cP_{\kersf A} \big( \partial  (\|\cdot\|_1^*) (A^\top r)
\big) )_{\infty} \backslash \{ 0\} \ne\varnothing$ in this case,  Proposition \ref{p_cmp_p1}-(ii) is also not satisfied. This shows that Propositions \ref{p_cmp_p1}-(ii) and \ref{p_unique_p1}-(iii) are not necessary for compactness and uniqueness.
\end{example}

\subsubsection{Developments of the existing results}
The discussions of uniqueness were often based on several typical cones \cite{venkat_geometry,ame_edge,fadili_iiima}, and a number of geometric measures (e.g., Gaussian width and statistical dimension) were further developed under specific settings.
\begin{definition} \label{def_unique}
{\rm (i) (descent cone) \cite[Definition 2.7]{ame_edge}, \cite[Definition 3.3]{fadili_iiima}} The descent cone of $f$ at $x_0 \in \domsf f$ is defined as $\cD_f(x_0) = \mathrm{cone} \big\{ x-x_0: f(x)\le f(x_0) \big\} = \R_+ (x-x_0)$, for any $x$ satisfying $f(x)\le f(x_0)$. 

{\rm (ii) (radial cone) \cite[Definition 2.5]{fadili_cone}} The radial cone of a set  $C\subset\R^n$ at $x_0 \in C$ is defined as $\cR_C(x_0) = \mathrm{cone} (C-x_0) = \R_+ (C-x_0)$. 

{\rm (iii) (tangent cone) \cite[Definition 2.54]{shapiro_book}} The tangent cone of a set  $C\subset\R^n$ at $x_0 \in C$ is a topological closure of the radial cone, and defined as $\Tsf_C(x_0) = \overline{\cR_C(x_0)} = \clsf\  \R_+ (C-x_0)$. This is always closed. 
\end{definition}

The descent cone of $f$ at $x_0\in\domsf f$ is the set of descent directions of the function $f$ at the point $x_0$, which is not closed in general. Radial cone $\cR_C(x_0)$ is a cone of feasible directions in $C$ at $x_0$.  Applying tangent cone to $\slevsf_{x_0} f$ (at $x_0$), we have $\Tsf_{\slevsf_{x_0} f } (x_0) = \clsf (\R_+ (z-x_0) )$ ($\forall z\in \slevsf_{x_0} f $), which is always closed by definition. The above concepts are essentially equivalent based on the following
\begin{lemma} \label{l_descent}
Given a solution $x^\star$ to \eqref{p1}, it holds that 

{\rm (i)} $\cD_f(x^\star) = \Tsf_{\mathrm{slev}_{x^\star} f} (x^\star)$.

{\rm (ii)} $\cD_f(x^\star) \mcap \kersf A = \Tsf_{\mathrm{slev}_{x^\star} f} (x^\star)  \mcap \kersf A = \cR_{\partial f^*(A^\top r)} (x^\star)\mcap \kersf A  = (\slevsf_{x^\star} f -x^\star)\mcap \kersf A$.
\end{lemma}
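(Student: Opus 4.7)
The plan is to use Theorem~\ref{t_X_p1}-(iii) together with Proposition~\ref{p_unique} to reduce all four sets in (ii) to the translated solution set $X-x^\star$. For part~(i), by Definitions~\ref{def_unique}-(ii) and (iii) I already have $\cD_f(x^\star)=\cR_{\slevsf_{x^\star} f}(x^\star)$ and $\Tsf_{\slevsf_{x^\star} f}(x^\star)=\clsf\, \cR_{\slevsf_{x^\star} f}(x^\star)$, so (i) amounts to showing the descent cone is closed. I would identify both cones with the polar $\bigl(\partial f(x^\star)\bigr)^\circ$: the tangent cone via the normal-cone formula $N_{\slevsf_{x^\star} f}(x^\star)=\R_+\partial f(x^\star)$, which is valid because the Fermat condition $A^\top r\in \partial f(x^\star)$ furnishes a subgradient; the descent cone via the support-function identity $f'(x^\star;\cdot)=\sigma_{\partial f(x^\star)}$, which gives $\cD_f(x^\star)=\{d:f'(x^\star;d)\le 0\}=\bigl(\partial f(x^\star)\bigr)^\circ$. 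Closedness of $\partial f(x^\star)$ then yields (i).

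For part~(ii), the first equality is immediate from (i). To finish, I would first establish $(\slevsf_{x^\star}f-x^\star)\mcap \kersf A = X-x^\star = (\partial f^*(A^\top r)-x^\star)\mcap \kersf A$: the second identity is exactly Theorem~\ref{t_X_p1}-(iii); for the first, the inclusion ``$\subseteq$'' uses that $Ad=0$ with $f(x^\star+d)\le f(x^\star)$ makes the primal objective at $x^\star+d$ attain its minimum (so $x^\star+d\in X$), while ``$\supseteq$'' follows from $Ay=Ax^\star$ and $f(y)=f(x^\star)$ on $X$ (Proposition~\ref{p_unique}-(i)/(iii)). The radial cones $\cR_{\partial f^*(A^\top r)}(x^\star)$ and $\cD_f(x^\star)$ are then tied to $X-x^\star$ via the observation that any scaled direction $d=t(y-x^\star)\in \kersf A$ with $t>0$ forces $Ay=Ax^\star$, so the base point $y$ is already a solution; convexity of $X$ handles $t\in [0,1]$, and the set equalities after intersecting with $\kersf A$ follow by matching both radial cones against the common $X-x^\star$ characterization.

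The main obstacle will be handling scalings $t>1$ in the reconciliation of the radial cones with $X-x^\star$, since convexity of $X$ alone only yields $(1-t)x^\star+ty\in X$ for $t\in [0,1]$. To overcome this I would exploit that $X$ coincides with the sublevel set of the primal objective on the affine slice $\{z:Az=Ax^\star\}$, where the data-fitting term is constant; this lets me freely reparametrize along the ray through $y-x^\star$ by swapping $y$ for another solution at a compatible scale, so that $d=t(y-x^\star)$ is always representable as $(y'-x^\star)$ for some $y'\in X$ with $Ay'=Ax^\star$. This slice-based reparametrization is the crux of the argument and closes (ii).
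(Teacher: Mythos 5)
There are two genuine gaps. In part (i), your pivotal identity $\cD_f(x^\star)=\{d: f'(x^\star;d)\le 0\}=\bigl(\partial f(x^\star)\bigr)^\circ$ does not hold: only the inclusion ``$\subseteq$'' is valid. The right-hand side is a closed polar cone and, under the qualification $0\notin\partial f(x^\star)$, it equals $\Tsf_{\slevsf_{x^\star}f}(x^\star)$, i.e.\ the \emph{closure} of $\cD_f(x^\star)$; identifying $\cD_f(x^\star)$ with it is therefore exactly the closedness assertion that (i) asks for, so the argument is circular. Concretely, take $f(x_1,x_2)=x_1+x_2^2$, $A=\begin{bmatrix}1 & 0\end{bmatrix}$, $b=1$, so $x^\star=(0,0)$: then $f'(x^\star;(0,1))=0$ but $f(x^\star+t(0,1))=t^2>0$ for all $t>0$, hence $(0,1)\notin\cD_f(x^\star)$ while $(0,1)\in\bigl(\partial f(x^\star)\bigr)^\circ$. (This example also shows the closure issue is not cosmetic: with $\Tsf$ defined as the closure of the radial cone, $\cD_f(x^\star)=\{d_1<0\}\cup\{0\}$ is strictly smaller than $\Tsf_{\slevsf_{x^\star}f}(x^\star)=\{d_1\le 0\}$. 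The paper's own proof of (i) silently treats every element of $\Tsf$ as lying in the radial cone, so it does not supply the missing step either, but your polar-cone route does not repair it.) Separately, the normal-cone formula $N_{\slevsf_{x^\star}f}(x^\star)=\R_+\partial f(x^\star)$ needs $0\notin\partial f(x^\star)$, and the Fermat condition $A^\top r\in\partial f(x^\star)$ does not exclude $0\in\partial f(x^\star)$.

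In part (ii), your reduction $(\slevsf_{x^\star}f-x^\star)\mcap\kersf A=X-x^\star=(\partial f^*(A^\top r)-x^\star)\mcap\kersf A$ is correct and clean, and the observation that $t(y-x^\star)\in\kersf A$ with $t>0$ forces the base point $y$ into $X$ is also right. The failure is precisely at the step you flagged as the crux: the ``slice-based reparametrization'' that would rewrite $d=t(y-x^\star)$ with $t>1$ as $y'-x^\star$ for some $y'\in X$ cannot work, because $X-x^\star$ is not a cone in general. The paper's own Example~4 ($f(x_1,x_2)=\max\{|x_1|-1,0\}$, $A=\begin{bmatrix}0 & 1\end{bmatrix}$, $b=1$) has $X-x^\star=[-1,1]\times\{0\}$, and $d=(2,0)=2\,\bigl((1,0)\bigr)$ lies in $\cR_{\partial f^*(A^\top r)}(x^\star)\mcap\kersf A$ but is not of the form $y'-x^\star$ with $y'\in X$. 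What your computation actually delivers is $\cD_f(x^\star)\mcap\kersf A=\cR_{\partial f^*(A^\top r)}(x^\star)\mcap\kersf A=\R_+(X-x^\star)$, whereas $(\slevsf_{x^\star}f-x^\star)\mcap\kersf A=X-x^\star$; these coincide only when $X-x^\star$ happens to be a cone (the discrepancy is visible in the paper's own table for Example~4, where $\cD_f(x^\star)\mcap\kersf A=\R\times\{0\}$ but $(\partial f^*(A^\top r)-x_\rsf^\star)\mcap\kersf A=[-1,1]\times\{0\}$). The paper avoids this entirely by proving only the equality of the two cone intersections, working at the level of a single scaled point: $x^\star+\xi d\in\partial f^*(A^\top r)$ gives $A^\top r\in\partial f(x^\star+\xi d)$, and the subgradient inequality together with $\langle A^\top r\,|\,d\rangle=0$ yields $f(x^\star+\xi d)\le f(x^\star)$; conversely, $f(x^\star+\xi d)\le f(x^\star)$ with $\xi d\in\kersf A$ forces equality by optimality, and the Fenchel equality then returns $x^\star+\xi d$ to $\partial f^*(A^\top r)$. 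No rescaling of base points is needed there, which is exactly how it sidesteps the obstacle your plan runs into.
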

\begin{proof}
(i) Take $d \in \cD_f(x^\star)$, then by definition, $\exists \xi >0$, such that $f(x+\xi d)\le f(x^\star)$. This shows $x^\star+ \xi d\in \mathrm{slev}_{x^\star}f$ by definition of $\slevsf_{x^\star}f$. Let $z=x^\star+\xi d$ and $z\in \slevsf_{x^\star} f$, then by definition of $\Tsf_{\mathrm{slev}_{x^\star} f} (x^\star)$, we have $\eta  (z-x^\star) = \eta \xi d\in \Tsf_{\slevsf_{x^\star} f } (x^\star)$, $\forall \eta>0$. Taking $\eta=1/\xi$ yields $d\in  \Tsf_{\slevsf_{x^\star} f } (x^\star)$. 

Conversely, take  $d\in  \Tsf_{\slevsf_{x^\star} f } (x^\star)$, then  $\exists \xi>0$, such that $d=\xi(z-x^\star)$ for some $z\in  \slevsf_{x^\star}f$. This implies $z=x^\star+d/\xi$, and $f(z) = f(x^\star+d/\xi)\le f(x^\star)$ since $z\in \slevsf_{x^\star} f$. Then $d/\xi \in \cD_f(x^\star)$, and further $d\in \cD_f(x^\star)$, since $\cD_f(x^\star)$ is a cone.

(ii) Take $d \in  \cR_{\partial f^*(A^\top r)} (x^\star)\mcap \kersf A$, then $\exists \xi>0$, such that $x^\star +\xi d\in \partial f^*(A^\top r)$. This yields $A^\top r \in \partial f (x^\star +\xi d)$. By convexity of $f$, we have
$f(x^\star)  \ge f(x^\star+\xi d) -\xi \langle A^\top r | d\rangle
=  f(x^\star+ \xi d)$, since $d\in \kersf A$. This shows that $d\in \cD_f(x^\star)$.

Conversely, Take $d \in \cD_f(x^\star) \mcap \kersf A$, then by definition, $\exists \xi >0$, such that $f(x^\star + \xi d)\le f(x^\star)$. Since $x^\star$ is a solution, we indeed have $f(x^\star + \xi d) = f(x^\star)$ by Corollary \ref{c_unique}-(iv). Note also that $\langle d|A^\top r\rangle=0$, since $d\in\kersf A$. It follows that
\[
f(x^\star + \xi d) = f(x^\star) = \langle A^\top r|x^\star \rangle
-f^*(A^\top r) = \langle A^\top r|x^\star + \xi d\rangle
-f^*(A^\top r)
\]
which implies that $x^\star + \xi d \in \partial f^*(A^\top r)$, i.e.,
$d\in  \cR_{\partial f^*(A^\top r)} (x^\star)$. 
\end{proof}

The above concepts were used in the following well-known uniqueness conditions, and they are connected and unified by Lemma \ref{l_descent}.
\begin{proposition} \label{p_unique_ex}
The solution $x^\star$ to \eqref{p1} is unique, if and only if any of the following equivalent conditions holds.

{\rm (i)  \cite[Theorem 5.1]{vaiter_thesis}}  $ \mathsf{T}_{\mathrm{slev}_{x^\star} f} (x^\star)
\mcap \kersf A=\{0\}$, i.e., $A$ is injective on   $\Tsf_{\mathrm{slev}_{x^\star} f} (x^\star)$.

{\rm (ii)  \cite[Propositions 3.4 and 4.13-(i)]{fadili_iiima}} 
 $\cD_{f} (x^\star) \mcap \kersf A=\{0\}$.

{\rm (iii)  \cite[Theorem 3.1]{fadili_cone}}  $\cR_{\partial f^*(A^\top r)} (x^\star) \mcap \kersf A=\{0\}$.
\end{proposition}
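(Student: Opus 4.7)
The plan is to leverage the preparatory work already done in Lemma \ref{l_descent} and Proposition \ref{p_unique_p1}, so that the proof reduces to a short bookkeeping argument. First, I would observe that the mutual equivalence of (i), (ii), (iii) is immediate: Lemma \ref{l_descent}-(ii) already establishes the chain of equalities
\[
\cD_f(x^\star) \mcap \kersf A \;=\; \Tsf_{\slevsf_{x^\star} f}(x^\star) \mcap \kersf A \;=\; \cR_{\partial f^*(A^\top r)}(x^\star) \mcap \kersf A,
\]
so each of the three sets coincides with $\{0\}$ precisely when the others do. Consequently, it suffices to prove that uniqueness of $x^\star$ is equivalent to condition (iii).

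For that equivalence, I would invoke Proposition \ref{p_unique_p1}-(ii), which already characterizes uniqueness as $(\partial f^*(A^\top r) - x^\star) \mcap \kersf A = \{0\}$. The remaining task is therefore to verify
\[
\cR_{\partial f^*(A^\top r)}(x^\star) \mcap \kersf A = \{0\} \ \Longleftrightarrow\ (\partial f^*(A^\top r) - x^\star) \mcap \kersf A = \{0\}.
\]
The key ingredient is that $\kersf A$ is a linear subspace, hence closed under positive scaling. By Definition \ref{def_unique}-(ii), $\cR_{\partial f^*(A^\top r)}(x^\star) = \R_+(\partial f^*(A^\top r) - x^\star)$, so every element of the radial cone has the form $\xi(v - x^\star)$ with $\xi \geq 0$ and $v \in \partial f^*(A^\top r)$. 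For $\xi > 0$, the identity $A\bigl(\xi(v-x^\star)\bigr) = \xi A(v - x^\star)$ gives $\xi(v - x^\star) \in \kersf A \Leftrightarrow v - x^\star \in \kersf A$. Hence any non-zero element of $\cR_{\partial f^*(A^\top r)}(x^\star) \mcap \kersf A$ produces a non-zero element of $(\partial f^*(A^\top r) - x^\star) \mcap \kersf A$, and conversely any non-zero $w$ in the latter set belongs to the former by taking $\xi = 1$. The two sets therefore reduce to $\{0\}$ simultaneously.

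Combining the two steps gives the claimed equivalence between uniqueness and each of (i)--(iii). Since the substantive geometric content has been absorbed into Lemma \ref{l_descent} and the subdifferential characterization into Proposition \ref{p_unique_p1}-(ii), there is no real obstacle; the only subtle point is the positive-homogeneity reduction in the direction ``$\cR_{\partial f^*(A^\top r)}(x^\star) \mcap \kersf A = \{0\} \Leftarrow$'', which relies on the fact that the dilation factor $\xi$ cannot move a vector out of or into the subspace $\kersf A$. No additional assumptions beyond $f \in \Gamma_0(\R^n)$ and $X \ne \varnothing$ (needed to make $r$ well-defined via Proposition \ref{p_unique}-(ii)) should be invoked.
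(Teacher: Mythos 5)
Your proposal is correct and follows essentially the same route as the paper: the mutual equivalence of (i)--(iii) is delegated to Lemma \ref{l_descent}, and the link to uniqueness is made through the solution-set characterization, with the same positive-scaling argument (a nonzero $w=\xi(v-x^\star)$ in the radial cone intersected with $\kersf A$ corresponds to a nonzero element of $(\partial f^*(A^\top r)-x^\star)\mcap\kersf A$) that the paper phrases as ``$x^\star+\xi w$ is also a solution.'' Your version is merely slightly more modular in explicitly citing Proposition \ref{p_unique_p1}-(ii) rather than re-deriving that another solution exists.
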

\begin{proof}
Their mutual equivalence has been shown in Lemma \ref{l_descent}. We make more remarks of (iii) here. In fact, \cite[Theorem 3.1]{fadili_cone} presents a result for more general result than \eqref{p1}. The proof of \cite[Theorem 3.1]{fadili_cone} can be specialized and simplified for \eqref{p1} as follows.

If  $ (\cR_{\partial f^*(A^\top r)} (x^\star)\mcap \kersf A) \backslash \{0\} \ne \varnothing$, then $\exists 0 \ne w\in \cR_{\partial f^*(A^\top r)} (x^\star)\mcap \kersf A$, and $\exists \xi>0$, such that $x^\star +\xi w\in \partial f^*(A^\top r)$. Then $x^\star +\xi w$ is also a solution to \eqref{p1}. This argument also shows the converse statement (by contradiction).
\end{proof}

\begin{remark}
Proposition \ref{p_unique_ex}-(ii) has also been specialized to the case of atom norm in \cite[Proposition 2.1]{venkat_geometry}. 
\end{remark}

\subsubsection{Connections between existing results and ours}
Then, let us discuss the connection between existing and ours regarding uniqueness.
\begin{lemma} \label{l_connect_unique}
$(\partial f^*(A^\top r) -x^\star)\mcap \kersf A = (\slevsf_{x^\star} f - x^\star) \mcap \kersf A
 =\cD_f(x^\star) \mcap \kersf A$.
\end{lemma}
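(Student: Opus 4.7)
The second equality is essentially a restatement of what was already proved in Lemma \ref{l_descent}-(ii): there we showed $\cD_f(x^\star)\mcap \kersf A=(\slevsf_{x^\star} f-x^\star)\mcap \kersf A$, so nothing new is required for this part. The real content is the first equality, and the plan is to establish it by a short double inclusion argument using only Fermat's rule (already used to derive \eqref{a1}), the convex subgradient inequality, and the uniqueness of $Ax^\star$ and $f(x^\star)$ from Proposition \ref{p_unique}.

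For the inclusion ``$\subseteq$'', I would pick $d\in(\partial f^*(A^\top r)-x^\star)\mcap \kersf A$, so that $x^\star+d\in \partial f^*(A^\top r)$, i.e.\ $A^\top r\in \partial f(x^\star+d)$. Reversing the subgradient inequality gives $f(x^\star)\ge f(x^\star+d)-\langle A^\top r\mid d\rangle$, and since $d\in \kersf A$ makes $\langle A^\top r\mid d\rangle=\langle r\mid Ad\rangle=0$, we get $f(x^\star+d)\le f(x^\star)$, that is, $d\in \slevsf_{x^\star} f-x^\star$. This is exactly the argument already used in the first half of the proof of Lemma \ref{l_descent}-(ii), with the cone factor $\xi$ set to $1$.

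For the reverse inclusion ``$\supseteq$'', pick $d\in(\slevsf_{x^\star} f-x^\star)\mcap \kersf A$. Since $A(x^\star+d)=Ax^\star$, the data fidelity at $x^\star+d$ equals that at $x^\star$, so
\begin{equation*}
f(x^\star+d)+\tfrac{1}{2}\|A(x^\star+d)-b\|^2 \le f(x^\star)+\tfrac{1}{2}\|Ax^\star-b\|^2,
\end{equation*}
which, together with the optimality of $x^\star$, forces $x^\star+d$ to be a solution of \eqref{p1} as well. Applying Fermat's rule to this new solution (as in the first steps of \eqref{a1}) yields $A^\top(b-A(x^\star+d))\in \partial f(x^\star+d)$; because $d\in\kersf A$, the residual is preserved, $b-A(x^\star+d)=b-Ax^\star=r$, so $A^\top r\in \partial f(x^\star+d)$, equivalently $x^\star+d\in \partial f^*(A^\top r)$. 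Hence $d\in(\partial f^*(A^\top r)-x^\star)\mcap \kersf A$.

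The only mildly delicate point is the last step: one must observe that the residual $r$ is unchanged when $x^\star$ is replaced by another solution $x^\star+d$ along a $\kersf A$-direction, so that the single set $\partial f^*(A^\top r)$ captures every solution simultaneously. This is guaranteed by Proposition \ref{p_unique}-(i)--(ii), which makes the argument clean.
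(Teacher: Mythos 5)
Your handling of the first equality is correct and is essentially the paper's own argument in a slightly cleaner form: the paper obtains the inclusion $(\partial f^*(A^\top r)-x^\star)\mcap\kersf A\subseteq(\slevsf_{x^\star}f-x^\star)\mcap\kersf A$ by observing that $x^\star+v$ is again a solution (Theorem \ref{t_X_p1}-(iii) plus constancy of $f$ on $X$), whereas you invoke the subgradient inequality at $x^\star+d$ directly; your reverse inclusion (equal residual $\Rightarrow$ equal data fidelity $\Rightarrow$ $x^\star+d$ optimal $\Rightarrow$ Fermat's rule) is exactly the paper's step from the sublevel set back into $\partial f^*(A^\top r)$. Your closing remark that the residual is unchanged along $\kersf A$-directions is precisely the point that makes the single set $\partial f^*(A^\top r)$ capture all solutions, and it is good that you made it explicit.

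The gap is in delegating the second equality entirely to Lemma \ref{l_descent}-(ii). That identity, $(\slevsf_{x^\star}f-x^\star)\mcap\kersf A=\cD_f(x^\star)\mcap\kersf A$, cannot hold in general: $\cD_f(x^\star)\mcap\kersf A$ is a cone (indeed, since $\kersf A$ is a subspace, it equals $\R_+\big((\slevsf_{x^\star}f-x^\star)\mcap\kersf A\big)$), while by the first equality $(\slevsf_{x^\star}f-x^\star)\mcap\kersf A=X-x^\star$, which is a cone only in degenerate cases. The paper's own Example-4 of Section \ref{sec_eg_p1}, with $f(x_1,x_2)=\max\{|x_1|-1,0\}$, $A=\begin{bmatrix}0&1\end{bmatrix}$, $b=1$, exhibits the failure: there $(\partial f^*(A^\top r)-x^\star)\mcap\kersf A=[-1,1]\times\{0\}$ while $\cD_f(x^\star)\mcap\kersf A=\R\times\{0\}$. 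So citing Lemma \ref{l_descent}-(ii) does not close the argument; it imports an identity that is false as stated. (The paper's own proof of the present lemma has the matching flaw: from $v\in\cD_f(x^\star)$ it asserts $f(x^\star+v)\le f(x^\star)$, which is valid for generators of the descent cone but not for their positive multiples.) To make the chain correct one must either replace $\cD_f(x^\star)\mcap\kersf A$ by the generating set $\{v:f(x^\star+v)\le f(x^\star)\}\mcap\kersf A$, or weaken the third equality to the conic statement $\cD_f(x^\star)\mcap\kersf A=\R_+\big((\partial f^*(A^\top r)-x^\star)\mcap\kersf A\big)$, which is what the uniqueness criteria of Proposition \ref{p_unique_ex} actually need.
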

\begin{proof}
We now show $(\partial f^*(A^\top r) -x^\star)\mcap \kersf A\subseteq (\slevsf_{x^\star} f - x^\star) \mcap \kersf A
 \subseteq \cD_f(x^\star) \mcap \kersf A
 \subseteq (\partial f^*(A^\top r) -x^\star)\mcap \kersf A$.
 
Take $v\in (\partial f^*(A^\top r) -x^\star)\mcap \kersf A$, then, $x^\star +v$ is also a solution, then $f(x^\star+v)=f(x^\star)$, then $x^\star +v\in\slevsf_{x^\star} f$.

Second, take $v\in (\slevsf_{x^\star} f - x^\star) \mcap \kersf A $, then, $f(x^\star+v)\le f(x^\star)$ and $Ax^\star = A(x^\star +v)$. Since $x^\star$ is an optimal solution, we have $f(x^\star+v)=f(x^\star)$, and $x^\star +v$ is also a solution. This shows $v\in\cD_f(x^\star)$ by definition.

Third, take  $v\in \cD_f(x^\star) \mcap \kersf A $, then $f(x^\star+v)\le f(x^\star)$ and $Ax^\star = A(x^\star +v)$. Since $x^\star$ is an optimal solution, we have $x^\star +v$ is also a solution. This shows
$v\in \partial f^*(A^\top r) - x^\star$ by Theorem \ref{t_X_p1}-(iii).
\end{proof}

Lemma \ref{l_connect_unique} directly establishes the equivalence of ours to existing results in Proposition \ref{p_unique_ex}, without using the bridge of solution uniqueness.

\subsection{Examples} \label{sec_eg_p1}
The properties of lasso solutions can be obtained using our results.
\begin{corollary} [lasso solutions] \label{c_lasso}
If $f=\|\cdot\|_1$ in \eqref{p1}, the following hold.

{\rm (i)} The lasso solution set is given as $X=x_\rsf^\star + \big( \big(\cN_C( A^\top r )-  x_\rsf^\star \big) \mcap \kersf A \big)$, 
where  $\cN_C(A^\top r)$ denotes a normal cone of set $C = \{u\in\R^n: \|u\|_{\infty} \le 1 \}$ at the point $A^\top r$, and $x_\rsf^\star = A^\dagger \big( I_m +  (A\circ \cN_C \circ 
A^\top)^{-1} \big)^{-1}   (b) $.

{\rm (ii)} $X =x^\star +  \big( \big(\cN_C( A^\top r ) -  x^\star \big) \mcap \kersf A \big)$, if a particular solution $x^\star\in X$ is given.

{\rm (iii)} The lasso solution always exists for any $b\in\R^m$, i.e., $X\ne \varnothing$.

{\rm (iv)} $x^\star$ is unique, if and only if $(\cN_{ \{u\in\R^n: \|u\|_{\infty} \le 1 \}} (A^\top r) -x^\star ) \mcap \kersf A = \{0\}$.

{\rm (v)} $x^\star$ is unique, if  $\cP_{\kersf A} \big(\cN_{ \{u\in\R^n: \|u\|_{\infty} \le 1 \}} (A^\top r)  -x^\star \big) = \{0\}$.
\end{corollary}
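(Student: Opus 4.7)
The plan is to exploit the special form of the Fenchel conjugate of the $\ell_1$-norm: one has $f^{*} = \iota_{C}$ with $C = \{u\in\R^n : \|u\|_{\infty}\le 1\}$, and consequently $\partial f^{*} = \partial \iota_{C} = \cN_{C}$. Once this identification is made, parts (i), (ii), (iv) and (v) are obtained by direct substitution into the corresponding general statements already proved in the paper: namely Theorem \ref{t_X_p1}-(ii) for part (i), Theorem \ref{t_X_p1}-(iii) for part (ii), Proposition \ref{p_unique_p1}-(ii) for part (iv), and Proposition \ref{p_unique_p1}-(iii) for part (v). No further manipulation is needed, since the resolvent-type formula for $x_\rsf^{\star}$ in part (i) is the same as the one in Theorem \ref{t_X_p1}-(ii) with $\partial f^{*}$ replaced by $\cN_{C}$.

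The only substantive step is the existence assertion (iii). Here I would invoke Proposition \ref{p_cmp}-(iv): since $\|\cdot\|_{1}$ is coercive on the whole of $\R^{n}$, it is in particular coercive on the subspace $\kersf A$, so the solution set is non-empty (and in fact compact) for every $b\in\R^{m}$. As an alternative shortcut, one may appeal to Proposition \ref{p_ex_p1}-(iv): a direct computation gives $\ransf \partial \|\cdot\|_{1} = [-1,1]^{n}$, and $0 \in \intsf [-1,1]^{n} = \risf\ \ransf \partial\|\cdot\|_{1}$, so the condition of that proposition is met.

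A small consistency check worth flagging in the proof: the expressions $\cN_{C}(A^{\top}r)$ appearing in (i), (ii), (iv), (v) are only meaningful when $A^{\top}r\in C$, i.e.\ $\|A^{\top}r\|_{\infty}\le 1$. This is automatic: once $x^{\star}\in X$ exists (guaranteed by (iii)), the Fermat rule $A^{\top}r\in\partial \|\cdot\|_{1}(x^{\star})\subseteq [-1,1]^{n}$ gives exactly $A^{\top}r\in C$, so $\cN_{C}(A^{\top}r)$ is well-defined (and non-empty).

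In summary, the proof is essentially a specialization argument, and the main ``obstacle'' is really just the verification that $f = \|\cdot\|_{1}\in\Gamma_{0}(\R^{n})$ enjoys the coercivity hypothesis used for (iii); everything else is substitution. For clarity I would present parts (i)--(ii) first, then deduce (iii) from coercivity, and finally derive (iv)--(v) from Proposition \ref{p_unique_p1} applied with $\partial f^{*} = \cN_{C}$.
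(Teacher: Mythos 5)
Your proposal is correct and follows essentially the same route as the paper: identify $\partial(\|\cdot\|_1)^* = \cN_C$ and substitute into Theorem \ref{t_X_p1} and Proposition \ref{p_unique_p1}, with existence via $0\in\risf\ \ransf\partial\|\cdot\|_1 = \intsf\,[-1,1]^n$ and Proposition \ref{p_ex_p1}-(iv) — which is exactly the paper's argument for (iii), offered by you as the ``alternative shortcut.'' Your primary route for (iii) via restricted coercivity (Proposition \ref{p_cmp}-(iv)) is also valid and in fact yields the slightly stronger conclusion that $X$ is additionally compact.
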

\begin{proof}
(i)-(ii): In view of Theorem \ref{t_X_p1}, we have $f^*=(\|\cdot\|_1)^* = \iota_{ \{u\in\R^n: \|u\|_{\infty} \le 1 \}}$, and $\partial f^* = \partial  \iota_{ \{u\in\R^n: \|u\|_{\infty} \le 1 \}} = \cN_{ \{u\in\R^n: \|u\|_{\infty} \le 1 \}}$. 

(iii) In view of Theorem \ref{t_ex_p1}-(v) and $0\in \risf\ \ransf (\partial \|\cdot\|_1) = \intsf [-1,1]^n$.

(iv)-(v): by Proposition \ref{p_unique_p1}.
\end{proof}

For the uniqueness of lasso solution, it is difficult to obtain the concrete expression of $\cD_f(x^\star)$. However, our result in (v) provides a simple and clear geometric picture for analysis. This example demonstrates the superiority of our expression to the existing result of $\cD_f(x^\star)\mcap \kersf A=\{0\}$. 

Again, Corollary \ref{c_lasso}-(v) is sufficient but not necessary for solution uniqueness. It surely deserves further investigation of the lasso solution properties (especially from the geometric view), and establish connections with existing works of \cite{gilbert,zh_lasso,lasso_reload}, by invoking the support (and cosupport) of a particular solution $x^\star$. This is left for near future work.

\vskip.2cm
We further list several (toy) examples in following tables. 
\begin{itemize}
\item Example-1: $\min_{x_1,x_2\in\R} x_1 +\frac{1}{2}(x_2-1)^2$;   
\item Example-2: $\min_{x_1,x_2\in\R} e^{x_1} +\frac{1}{2}(x_2-1)^2$;   
\item Example-3: $\min_{x_1,x_2\in\R} \max \{x_1, 0\} + \frac{1}{2}(x_2-1)^2$;   
\item Example-4: $\min_{x_1,x_2\in\R} \max\{ |x_1|-1, 0\}  +\frac{1}{2}(x_2-1)^2$;   
\item Example-5: $\min_{x_1,x_2\in\R} \max\{ e^{x_2} -x_1, 0 \}  +\frac{1}{2} x_1^2$  \cite[Remark 2.2]{bredies_ppa}; 
\item Example-6: $\min_{x_1,x_2\in\R} \max\{ e^{x_2} -x_1, 0 \}  +\frac{1}{2} x_2^2$.
\end{itemize}

These examples support our presented results, by providing several key insights as below.

$\bullet$ Example-1 shows that $\kersf f_{\infty} \subseteq \cR_f$ generally, while other examples show that $\kersf f_{\infty} =\cR_f$, if $\inf f>-\infty$.

$\bullet$ In Example-1, $\kersf f_{\infty} \mcap \kersf A=\{(0,0)\}$. However, the solution set is empty, this demonstrates the importance of the condition $\inf f>-\infty$.

$\bullet$ $\inf f>-\infty$ is over-sufficient for solution existence (assuming other compulsory conditions are fulfilled). However, $\inf f=-\infty$ does not always imply empty solution set. If Example-1 is changed to $f(x_1,x_2)=x_2$, we have $X=\R\times \{0\} \ne \varnothing$, though $\inf f=-\infty$. This is because $\displaystyle \inf_{\substack{ t>0    \\ d \in \kersf A } }  f(x+td) = x_2 >-\infty$. That is why we propose the infimum over $\kersf A$. 

$\bullet$ The violation of $\kersf f_{\infty} \mcap \kersf A=\{0\}$ may have different implications: the solution set is either empty or unbounded. It remains uncertain as to which of these two situations holds true. Comparing Examples-2 and 3 and another pair of Examples-5 and 6,  $\kersf f_{\infty} \mcap \kersf A\ne \{(0,0)\}$. This violation leads to non-existence in Examples-2 and 5, but unboundedness in Examples-3 and 6.

$\bullet$ Examples-3 and 6 show that the condition of $0\in \risf\ (\ransf \partial f - \ransf A^\top)$ is not necessary for the maximality. Yet, Example-5 shows the tightness of this condition. 

$\bullet$ Example-5 is adapted from \cite[Remark 2.2]{bredies_ppa}, demonstrating that the monotonicity of $A \circ \partial f^* \circ A^\top$ is not necessarily maximal monotone, even though $\partial f^*$ is maximally monotone. Consequently, the solution existence depends on the specific value of $b$. Specifically, $X \ne \varnothing$ if $b \in (0, +\infty)$, and $X = \varnothing$ otherwise. For any $b \in (0, +\infty)$, we always have $x_\rsf^\star = (b, 0)$ and $A^\top r = (0, 0)$, with $\partial f^*(A^\top r) = \{ (p, q): p \ge e^q \} = \{ (p, q): p > 0,\ q \le \log p \}$. Moreover, $X = \{ 0 \} \times (-\infty, \log b]$. This example illustrates that even when the monotonicity of $A \circ \partial f^* \circ A^\top$ is not maximal, the problem may still admit solutions (cf. Lemma \ref{l_ex_p1}).

$\bullet$ A comparison between Examples 5 and 6 demonstrates that altering $A$ can alter the maximality of monotonicity of $A \circ \partial f^* \circ A^\top$, since the interaction between $\partial f^*$ and $A$ is specific to their combination.

\begin{table}[h!]
  \centering
   \caption{Evaluation criteria for solution properties of \eqref{p1}.}\vspace{-1em}
   \resizebox{1.0\columnwidth}{!} {
 \begin{tabular}{|l||l|l|} 
    \hline
    criteria & Example-1 & Example-2 \\
    \hline\hline
fitting into \eqref{p1}  & 
    \tabincell{l}{ $f: (x_1,x_2)\mapsto x_1$  \\ 
  $A= \begin{bmatrix}
  0 & 1   \end{bmatrix} $ \\ $b= 1$  }  
     &  \tabincell{l}{ $f: (x_1,x_2)\mapsto e^{x_1}$  \\ 
  $A= \begin{bmatrix}
  0 & 1   \end{bmatrix} $ \\ $b= 1$  }   \\
     \hline \hline
{\bf existence} & \faTimes   & \faTimes  \\
     \hline 
$x_\rsf^*$ & --- & ---   \\
     \hline 
$X$ & $\varnothing$ & $\varnothing$   \\
     \hline 
$f^*: (u_1,u_2)\mapsto$ & $\iota_{ \{(1,0) \} }$ & 
$\left\{ \begin{array}{ll}
0, & \textrm{if\ }  (u_1,u_2)=(0,0); \\
u_1(\log u_1 -1), &\textrm{if\ }  (u_1,u_2)\in (0,+\infty)\times \{0\} ;\\
\varnothing,  & \textrm{otherwise} .
\end{array}\right.$  \\
     \hline 
$\partial f: (x_1,x_2)\mapsto $ & $\{ (1,0) \}$ 
& $\{ (e^{x_1}, 0) \}$   \\
     \hline 
$\partial f^*: (u_1,u_2)\mapsto$ & 
$\cN_{\{(1,0) \} } = \left\{ \begin{array}{ll}
\R^2, & \textrm{if\ }  (u_1,u_2)=(1,0); \\
\varnothing,  & \textrm{otherwise} .
\end{array}\right.$
 & $\left\{ \begin{array}{ll}
\{\log u_1 \} \times \R, & \textrm{if\ }  (u_1,u_2) \in (0,+\infty)\times \{0\}; \\
\varnothing,  & \textrm{otherwise} .
\end{array}\right.$    \\
     \hline 
$\ransf \partial f \mcap \ransf A^\top$ & $\varnothing$ 
&  $\varnothing$  \\
     \hline 
$0\in \risf\ \ransf \partial f$ & \faTimes &\faTimes   \\
     \hline \hline
{\bf compactness}  & \faTimes & \faTimes  \\  
     \hline 
$f_{\infty}: (d_1,d_2)\mapsto$  & $d_1\ (=f)$ 
& $\left\{ \begin{array}{ll}
0 , & \textrm{if\ }  (d_1,d_2) \in (-\infty,0]\times \R; \\
+\infty,  & \textrm{otherwise} .
\end{array}\right.$ \\
     \hline 
$\kersf  f_{\infty}$ & $\{0\}\times \R$ &  $(-\infty,0] \times \R$    \\
     \hline 
$\cR_f$ & $(-\infty,0] \times \R$ &  $(-\infty,0] \times \R$       \\
     \hline 
$\kersf  f_{\infty}\cap \kersf A$ & $\{ (0,0) \} $ 
& $ (-\infty,0]\times \{0\}$    \\
     \hline 
$\cR_f \cap \kersf A$ &$(-\infty,0] \times \{0\}$  
& $ (-\infty,0]\times \{0\}$      \\

     \hline \hline
{\bf uniqueness}  & \faTimes & \faTimes   \\
     \hline 
$ x^\star / \cD_f (x^\star) /\Tsf_{\slevsf_{x^\star} f} (x^\star)  $ 
  & --- &---      \\
     \hline 
    \end{tabular}
   }
    \vskip 0.5em
\label{table_reg}
\vspace*{-0.3cm}
\end{table}

\begin{table}[h!]
  \centering
   \caption{Evaluation criteria for solution properties of \eqref{p1}.}\vspace{-1em}
   \resizebox{1.0\columnwidth}{!} {
 \begin{tabular}{|l||l|l|} 
    \hline
    criteria & Example-3 & Example-4 \\
    \hline\hline
fitting into \eqref{p1}  & 
    \tabincell{l}{ $f: (x_1,x_2)\mapsto \max\{x_1,0\}$  \\ 
  $A= \begin{bmatrix}
  0 & 1   \end{bmatrix} $ \\ $b= 1$  }  
     &  \tabincell{l}{ $f: (x_1,x_2)\mapsto \max\{ |x_1|-1, 0\}$  \\ 
  $A= \begin{bmatrix}
  0 & 1   \end{bmatrix} $ \\ $b= 1$  }   \\
     \hline \hline
{\bf existence} & \faCheck   & \faCheck  \\
     \hline 
$x_\rsf^*$ & $(0,1)$ & $(0,1)$   \\
     \hline 
$X$ & $(-\infty,0] \times \{1\}$ & $[-1,1] \times \{1\}$   \\
     \hline 
$f^*: (u_1,u_2)\mapsto$ & $\left\{ \begin{array}{ll}
0, & \textrm{if\ }  (u_1,u_2)\in [0,1]\times \{0\}; \\
+\infty,  & \textrm{otherwise} .
\end{array}\right.$  
& $\left\{ \begin{array}{ll}
|u_1|, & \textrm{if\ }  (u_1,u_2)\in [-1,1]\times \{0\}; \\
+\infty,  & \textrm{otherwise} .
\end{array}\right.$ 
 \\
     \hline 
$\partial f: (x_1,x_2)\mapsto $ & 
$ \left\{ \begin{array}{ll}
\{ (1,0)\}, & \textrm{if\ } (x_1,x_2)\in (0,+\infty)\times \R ;\\
\{ (0,0)\}, & \textrm{if\ }  (x_1,x_2)\in (-\infty, 0)\times \R; \\ { }
[0,1]\times \{0\}, & \textrm{if\ } (x_1,x_2)\in \{0\}\times \R.
\end{array}\right. $ 
& $ \left\{ \begin{array}{ll}
\{ (-1,0)\}, & \textrm{if\ } (x_1,x_2)\in (-\infty, -1)\times \R ;\\ {}
[-1,0]\times \{ 0\}, & \textrm{if\ } (x_1,x_2)\in \{-1\}\times \R ;\\
\{ (0,0)\}, & \textrm{if\ } (x_1,x_2)\in (-1, 1)\times \R ;\\ {}
[0, 1]\times \{ 0\}, & \textrm{if\ } (x_1,x_2)\in \{1\}\times \R ;
\\
\{ (1,0)\}, & \textrm{if\ } (x_1,x_2)\in (1,+\infty)\times \R ;\\ \end{array}\right. $   \\
     \hline 
$\partial f^*: (u_1,u_2)\mapsto$ & 
$\left\{ \begin{array}{ll}
(-\infty, 0] \times \R, & \textrm{if\ }  (u_1,u_2)=(0,0); \\
\{ 0\} \times \R, & \textrm{if\ }  (u_1,u_2)\in (0,1) \times \{0\}; \\ { }
[0, +\infty) \times \R, & \textrm{if\ }  (u_1,u_2)=(1,0); \\
\varnothing, & \textrm{otherwise}.
\end{array}\right.$
 &  $ \left\{ \begin{array}{ll}
 (-\infty,-1]\times \R, & \textrm{if\ } (u_1,u_2) =(-1,0) ;\\ {}
\{-1\}\times \R, & \textrm{if\ } (u_1,u_2)\in (-1,0)\times \{0\};\\ { }
[-1, 1]\times \R, & \textrm{if\ } (u_1,u_2) = (0,0) ;\\ {}
 \{1\}\times \R & \textrm{if\ } (u_1,u_2)\in (0, 1) \times \{ 0\} ;
\\ { }
[1,+\infty)\times \R  , & \textrm{if\ } (x_1,x_2) = (1,0); \\
\varnothing, & \textrm{otherwise}. \end{array}\right. $     \\
     \hline 
$\ransf \partial f \mcap \ransf A^\top$ & $\{(0,0) \}$  
&  $\{(0,0) \}$ \\
     \hline 
$A\circ \partial f^* \circ A^\top: y\mapsto$ 
& $\left\{ \begin{array}{ll}
\R, & \textrm{if\ } y=0; \\
\varnothing,  & \textrm{otherwise} .
\end{array}\right.$    
& $\left\{ \begin{array}{ll}
\R, & \textrm{if\ } y=0; \\
\varnothing,  & \textrm{otherwise} .
\end{array}\right.$   \\
     \hline 
$\ransf (I_m  +A\circ \partial f^* \circ A^\top)$ & $\R$  & $\R$  \\
     \hline 
maximality of $A\circ \partial f^* \circ A^\top$ & \faCheck
 & \faCheck  \\
     \hline 
$\risf\ \ransf \partial f \mcap \ransf A^\top$ 
& $\varnothing$ &  $\{(0,0) \}$  \\
     \hline 
$0\in \risf\ \ransf \partial f$ & \faTimes & \faCheck   \\
     \hline \hline
{\bf compactness}  & \faTimes & \faCheck  \\  
     \hline 
$f_{\infty}: (d_1,d_2)\mapsto$  & $\max\{d_1,0\} \ (=f)$ 
& $|d_1|$ \\
     \hline 
$(\mathrm{slev}_{x} f )_{\infty}/\kersf  f_{\infty} /\cR_f$ 
&$(-\infty,0] \times \R$   & $ \{0\} \times \R$      \\
     \hline 
$\kersf  f_{\infty}(/\cR_f)\cap \kersf A$ &  $(-\infty,0] \times \{0\} $
& $ \{ (0,0) \}$    \\
     \hline 
$\partial f^*(A^\top r)$ & $(-\infty,0] \times \R$   
& $ [-1,1] \times \R$     \\
     \hline 
$(\partial f^*(A^\top r))_{\infty} \mcap \kersf A$ 
&    $(-\infty,0] \times \{0\} $ &  $ \{ (0,0)\} $    \\
     \hline 
$\cP_{\kersf A} \big( (\partial f^*(A^\top r))_{\infty} \big) $ 
& $(-\infty,0] \times \{0\}$   &  $ \{ (0,0)\} $     \\

     \hline \hline
{\bf uniqueness}  & \faTimes & \faTimes   \\
        \hline 
$ x^\star$ (chosen) &  $(0,1)$ & $(0,1)$  \\
        \hline 
$ \mathrm{slev}_{x^\star} f$ 
&  $(-\infty,0] \times \R $ &  $[-1,1] \times \R$    \\
        \hline 
$\mathsf{T}_{\mathrm{slev}_{x^\star} f} (x^\star) / \cD_f (x^\star)$ 
&  $(-\infty,0] \times \R $ &  $ \R^2$    \\
        \hline 
$\mathsf{T}_{\mathrm{slev}_{x^\star} f} (x^\star)
(/ \cD_f (x^\star)) \cap \kersf A$  &  $(-\infty,0] \times \{0\} $ 
&  $ \R  \times \{0\} $    \\
     \hline 
$ \cD_f (x^\star) \mcap \kersf A $ 
   &  $(-\infty,0] \times \{0\} $ 
&  $ \R  \times \{0\} $    \\
     \hline 
$ (\partial f^*(A^\top r) -x_\rsf^\star) \mcap \kersf A $ 
 &   $(-\infty,0] \times \{0\} $  &    $[-1,1] \times \{0\} $     \\
     \hline 
$(\cP_{\kersf A} \circ  \partial f^* ) (A^\top r) $ 
  &   $(-\infty,0] \times \{0\} $ & $[-1,1] \times \{0\} $    \\

 \hline
    \end{tabular}
   }
    \vskip 0.5em
\label{table_reg}
\vspace*{-0.3cm}
\end{table}

\begin{table}[h!]
  \centering
   \caption{Evaluation criteria for solution properties of \eqref{p1}.}\vspace{-1em}
   \resizebox{1.0\columnwidth}{!} {
 \begin{tabular}{|l||l|l|} 
    \hline
    criteria & Example-5 & Example-6 \\
    \hline\hline
fitting into \eqref{p1}  & 
    \tabincell{l}{ $f: (x_1,x_2)\mapsto \max\{e^{x_2}-x_1,0\}$  \\ 
  $A= \begin{bmatrix}
  1 & 0   \end{bmatrix} $ \\ $b=0$  }  
     &  \tabincell{l}{ $f: (x_1,x_2)\mapsto \max\{e^{x_2}-x_1, 0\}$  \\    $A= \begin{bmatrix}
  0 & 1   \end{bmatrix} $ \\ $b= 0$  }   \\
     \hline \hline
{\bf existence} & \faTimes   & \faCheck  \\
     \hline 
$x_\rsf^*$ & ---  & $(0, 0)$   \\
     \hline 
$X$ & $\varnothing$  & $[1,+\infty)\times \{0\}$   \\
     \hline 
$f^*: (u_1,u_2)\mapsto$ & \multicolumn{2}{l|}{ $\left\{ \begin{array}{ll}
u_2 \log(-\frac{u_2}{u_1}) - u_2, & \textrm{if\ }  (u_1,u_2)\in (-\infty, 0)\times (0, +\infty); \\
0, & \textrm{if\ }  (u_1,u_2) = (0,0); \\ 
+\infty,  & \textrm{otherwise} .
\end{array}\right.$  }
 \\
     \hline 
$\partial f: (x_1,x_2)\mapsto $ & 
\multicolumn{2}{l|}{ $ \left\{ \begin{array}{ll}
\{ (-1, e^{x_2})\}, & \textrm{if\ } e^{x_2} >x_1 ;\\
\{ (0, 0)\}, & \textrm{if\ } e^{x_2} <x_1 ;\\
\{(-t, te^{x_2}): t\in [0,1]\}, & \textrm{if\ } e^{x_2} =x_1 .
\end{array}\right. $ }  \\
     \hline 
$\partial f^*: (u_1,u_2)\mapsto$ & 
\multicolumn{2}{l|}{  $\left\{ \begin{array}{ll}
(-\infty, v) \times \{\log v\}, & \textrm{if\ }  (u_1,u_2) \in \{-1\} \times (0, +\infty); \\
\{ (-\frac{u_2}{u_1}, \log (-\frac{u_2}{u_1}) )\} \times \R, & \textrm{if\ }  (u_1,u_2)\in (-1, 0) \times  (0, +\infty); \\ { }
\{(p,q): p\ge e^q \}, & \textrm{if\ }  (u_1,u_2)=(0,0); \\
\varnothing, & \textrm{otherwise}.
\end{array}\right.$ }   \\
     \hline 
$\ransf \partial f \mcap \ransf A^\top$ & $\{(0,0) \}$  
&  $\{(0,0) \}$   \\
     \hline 
$A\circ \partial f^* \circ A^\top: y\mapsto$ 
& $\left\{ \begin{array}{ll}
(0, +\infty), & \textrm{if\ } y=0; \\
\varnothing,  & \textrm{otherwise} .
\end{array}\right.$    
& $\left\{ \begin{array}{ll}
\R, & \textrm{if\ } y=0; \\
\varnothing,  & \textrm{otherwise} .
\end{array}\right.$       \\
     \hline 
$\ransf (I_m  +A\circ \partial f^* \circ A^\top)$ & 
$(0, +\infty)$ & $\R$  \\
     \hline 
maximality of $A\circ \partial f^* \circ A^\top$ & \faTimes
 & \faCheck  \\
     \hline 
$\risf\ \ransf \partial f \mcap \ransf A^\top$ 
& $\varnothing$ &  $\varnothing$  \\
     \hline 
$0\in \risf\ \ransf \partial f$ & \multicolumn{2}{l|}{ \faTimes } \\
     \hline \hline
{\bf compactness}  & \faTimes & \faCheck  \\  
     \hline 
$f_{\infty}: (d_1,d_2)\mapsto$  & \multicolumn{2}{l|}{ 
$\left\{ \begin{array}{ll}
0, & \textrm{if\ } (d_1,d_2)\in [0,+\infty)\times (-\infty, 0]; \\
-d_1, & \textrm{if\ } (d_1,d_2)\in (-\infty, 0)\times (-\infty, 0]; \\
+\infty, & \textrm{if\ } (d_1,d_2)\in \R \times (0, +\infty).
\end{array}\right.$     } \\
     \hline 
$(\mathrm{slev}_{x} f)_{\infty} / \kersf  f_{\infty} / \cR_f $ 
&  \multicolumn{2}{l|}{ $ [0,+\infty)\times (-\infty, 0]$  }    \\
     \hline 
$\kersf  f_{\infty} (/\cR_f) \cap \kersf A$ &  $ \{0\} \times (-\infty, 0]$ 
& $ [0,+\infty)\times \{0\} $     \\
     \hline 
$\partial f^*(A^\top r)$ & --- & $ \{(p,q): p\ge e^q \} $     \\
     \hline 
$(\partial f^*(A^\top r) )_{\infty}$ & --- & $[0, +\infty)
\times (-\infty,0] $     \\
     \hline 
$(\partial f^*(A^\top r))_{\infty} \mcap \kersf A$ 
& --- &  $[0, +\infty) \times \{0\} $      \\
     \hline 
$\cP_{\kersf A} \big( (\partial f^*(A^\top r))_{\infty} \big) $ 
& ---  &  $[0, +\infty) \times \{0\} $       \\

     \hline \hline
{\bf uniqueness}  & \faTimes & \faTimes   \\
     \hline 
$ x^\star $ (chosen) & ---  &  $(1, 0) $      \\
     \hline 
$\slevsf_{x^\star} f$ & ---  & $ \{(p,q): p\ge e^q \} $   \\
     \hline 
$\Tsf_{\slevsf_{x^\star} f} (x^\star) / \cD_f (x^\star)$ & --- 
 & $ \{(p,q): p\ge q \} $   \\
     \hline 
$ \Tsf_{\slevsf_{x^\star} f} (x^\star) (/\cD_f (x^\star)) \mcap \kersf A $ 
   &  --- & $[0,+\infty)\times\{0\}$   \\
     \hline 
$ (\partial f^*(A^\top r) -x_\rsf^\star) \mcap \kersf A $ 
 & ---  &  $[1, +\infty) \times \{0\} $      \\
     \hline 
$(\cP_{\kersf A} \circ  \partial f^* ) (A^\top r) $ 
  & --- & $(0, +\infty) \times \{0\} $       \\

 \hline
    \end{tabular}
   }
    \vskip 0.5em
\label{table_reg}
\vspace*{-0.3cm}
\end{table}

\section{Equality-constrained minimization problem} \label{sec_p2}
\subsection{Solution set}
Similar to Sect. \ref{sec_rls}, we first give  preliminary expressions of solution set of the problem \eqref{p2} in Lemma \ref{l_X_p2}, and then study the solution properties. Finally, Lemma \ref{l_X_p2} is refined to Theorem \ref{t_X_p2}.  
\begin{lemma} \label{l_X_p2}
The solution set of \eqref{p2} is given as

{\rm (i)} $X =   A^\dagger b  + \big(  \mcup_{v\in\R^m} \partial f^*(A^\top v ) -A^\dagger b \big) \mcap \kersf A $; 

{\rm (ii)} $X =   x^\star  + \big(  \mcup_{v\in\R^m} \partial f^*(A^\top v ) -x^\star \big) \mcap \kersf A $, if a particular solution $x^\star$ is given. 
\end{lemma}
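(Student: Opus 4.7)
The plan is to mirror the proof strategy of Theorem \ref{t_X_p1} by reformulating \eqref{p2} as an unconstrained minimization $\min_x f(x) + \iota_{\{x:Ax=b\}}(x)$ and applying Fermat's rule. Under a standard constraint qualification on the feasible set (e.g., $\risf \domsf f \mcap \{x: Ax=b\} \ne \varnothing$, which is tacitly guaranteed whenever \eqref{p2} admits a solution), the subdifferential sum rule of \cite[Corollary 16.48]{plc_book} yields
\[
\partial\bigl(f + \iota_{\{x: Ax=b\}}\bigr)(x^\star) = \partial f(x^\star) + \cN_{\{x: Ax=b\}}(x^\star) = \partial f(x^\star) + \ransf A^\top,
\]
since the normal cone of an affine set equals $\ransf A^\top$ at any feasible point. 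Fermat's rule therefore reads: $x^\star \in X$ iff $Ax^\star = b$ and $\exists v\in\R^m$ with $-A^\top v \in \partial f(x^\star)$.

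Next, invoking $(\partial f)^{-1} = \partial f^*$ (valid for $f\in\Gamma_0(\R^n)$), the subdifferential inclusion becomes $x^\star \in \partial f^*(-A^\top v)$; absorbing the sign into $v$, this is $x^\star \in \mcup_{v\in\R^m} \partial f^*(A^\top v)$. Simultaneously, $Ax^\star = b$ is equivalent to $x^\star - A^\dagger b \in \kersf A$, because $A^\dagger b$ is the minimum-norm solution of $Ax=b$ and lies in $\ransf A^\top$. Intersecting the two characterizations,
\[
x^\star - A^\dagger b \in \Bigl(\mcup_{v\in\R^m} \partial f^*(A^\top v) - A^\dagger b\Bigr) \mcap \kersf A,
\]
which, after rearranging, is precisely the expression in (i).

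For part (ii), I would start from (i) and repeat the manipulation performed at the end of the proof of Theorem \ref{t_X_p1}(iii). Given a particular solution $x^\star$, decompose $A^\dagger b = \cP_{\ransf A^\top} x^\star$ and write $x^\star = A^\dagger b + \cP_{\kersf A} x^\star$. Substituting this identity into (i), translating both the outer base point and the set inside the intersection by $\cP_{\kersf A} x^\star$, and using that $\kersf A$ is invariant under translation by any kernel vector, produces $X = x^\star + \bigl(\mcup_{v\in\R^m} \partial f^*(A^\top v) - x^\star\bigr) \mcap \kersf A$.

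The main obstacle is the subdifferential sum rule itself: unlike in Theorem \ref{t_X_p1}, where $\frac{1}{2}\|A\cdot-b\|^2$ has full domain, the indicator $\iota_{\{x: Ax=b\}}$ is supported only on the affine set, so the decomposition $\partial(f + \iota) = \partial f + \cN$ is not automatic and requires an explicit constraint qualification. I would address this by observing that the existence of a solution already forces $x^\star \in \domsf f \mcap \{x:Ax=b\}$, and invoking the minimal CQ $\risf \domsf f \mcap \{x: Ax=b\} \ne \varnothing$ (or its strong-relative-interior analogue) to activate the sum rule. Once this is secured, all remaining steps are direct translations of those in Theorem \ref{t_X_p1}.
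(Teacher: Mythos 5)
Your proposal is correct and follows essentially the same route as the paper's: Fermat's rule applied to $f$ plus the indicator of the affine feasible set, the identity $\cN_{\{x:Ax=b\}}(x)=\ransf A^\top$, the inversion $(\partial f)^{-1}=\partial f^*$, and the same translation argument for (ii); the paper merely phrases the first step via the subspace decomposition, fixing $x_\rsf^\star=A^\dagger b$ and minimizing over $x_\ksf$ with $\iota_{\kersf A}$, which is the identical computation. One caveat: your claim that the constraint qualification is ``tacitly guaranteed whenever \eqref{p2} admits a solution'' is not true in general (solution existence does not imply $\risf\ \domsf f \mcap \{x:Ax=b\}\ne\varnothing$, and without it the inclusion $X\subseteq A^\dagger b+(\mcup_{v}\partial f^*(A^\top v)-A^\dagger b)\mcap\kersf A$ can fail), though the paper's own proof silently invokes the same sum rule, so your version is no less rigorous than the source.
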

\begin{proof}
(i) By subspace decomposition of $x=x_\rsf+x_\ksf$, \eqref{p2} is reformulated as $\min_{x_\rsf,x_\ksf\in\R^n} f(x_\rsf+x_\ksf)$, subject to $Ax_\rsf = b$ and $Ax_\ksf = 0$. Noting that $x_\rsf^\star = A^\dagger b$, it yields that $\min_{x_\ksf \in\R^n} f(A^\dagger b + x_\ksf)$, subject to $Ax_\ksf = 0$. 
Using indicator function, this constrained problem is equivalently rewritten as $\min_{x_\ksf\in\R^n} f(A^\dagger b + x_\ksf) +\iota_C (x_\ksf)$, 
where $C=\{x\in \R^n| Ax=0\}=\kersf A$.  Taking subdifferential w.r.t. $x_\ksf$, the Fermat's rule yields $0 \in \partial f(A^\dagger b + x_\ksf^\star) + \cN_{\kersf A}(x_\ksf^\star)$, 
where $\cN_{\kersf A}(x_\ksf^\star) $ denotes the normal cone of ${\kersf A}$ at $x_\ksf^\star$, and indeed $\cN_{\kersf A}(x_\ksf^\star) =\ransf A^\top$. Thus, it becomes
\be \label{e2}
\exists v_0 \in\R^m,\quad \textrm{s.t.\ }A^\top v_0 \in \partial f(A^\dagger b + x_\ksf^\star),\quad\textrm{where\ }
x_\ksf^\star\in \kersf A,
\ee
and then, $A^\dagger b + x_\ksf^\star \in \mcup_{v\in\R^m} \partial f^*(A^\top v )$, subject to $x_\ksf^\star\in \kersf A$. Thus, the kernel part is given as $x_\ksf^\star \in \big(  \mcup_{v\in\R^m} \partial f^*(A^\top v )
-A^\dagger b \big) \mcap \kersf A$, from which (i) follows.

(ii)  If $x^\star$ is a solution, it implies that $x_\rsf^\star = A^\dagger b$ and $x_\ksf^\star \in \big(  \mcup_{v\in\R^m} \partial f^*(A^\top v )
-A^\dagger b \big) \mcap \kersf A$. Subtracting $x_\ksf^\star$ on both sides, we obtain
\begin{eqnarray}
X &= &  A^\dagger b  + \big(  \mcup_{v\in\R^m} \partial f^*(A^\top v )
-A^\dagger b \big) \mcap \kersf A 
\nonumber \\
 & = &  A^\dagger b +x_\ksf^\star  + \Big( \big(  \mcup_{v\in\R^m} \partial f^*(A^\top v )
-A^\dagger b \big) \mcap \kersf A \Big) -x_\ksf^\star
\nonumber \\
 & = & x^\star + \big(  \mcup_{v\in\R^m} \partial f^*(A^\top v )
-A^\dagger b - x_\ksf^\star \big) \mcap 
\big( \kersf A-x_\ksf^\star \big)
\nonumber \\
 & = & x^\star + \big(  \mcup_{v\in\R^m} \partial f^*(A^\top v )
- x^\star \big) \mcap \kersf A.
\nonumber
\end{eqnarray}
\end{proof}

One can see from the above proof that the subspace decomposition is a very convenient tool for solving \eqref{p2}, since the equality constraint automatically determines the unique range component: $x_\rsf^\star = A^\dagger b$ (assuming $b\in\ransf A$). The kernel part $x_\ksf^\star$ can then be found by applying $\partial f^*$. From Lemma \ref{l_X_p2}, it is easy to find the following sufficient and necessary condition for solution existence with given $b\in \R^m$. 
\begin{theorem} \label{t_ex_p2}
For the problem \eqref{p2}, $X\ne \varnothing$, if and only if 
 $b\in \ransf  (A \circ \partial f^* \circ A^\top) = \domsf (A\infimal \partial f)$.
\end{theorem}
\begin{proof}
Observing \eqref{p2}, based on the proof of Lemma \ref{l_X_p2}, we develop
\begin{eqnarray}
&&  X\ne\varnothing 
\nonumber \\
& \  \Longleftrightarrow \ & 
\exists x_\rsf^\star\in \ransf A^\top, x_\ksf^\star \in\kersf A, \textrm{\ such that $Ax_\rsf^\star = b$ and \eqref{e2}  holds}
\nonumber \\
& \  \Longleftrightarrow \ &
\textrm{$b\in\ransf A$ and $\exists v_0\in\R^m$, $x_\ksf^\star\in \kersf A$, such that $A^\dagger b + x_\ksf^\star \in  \partial f^*(A^\top v_0 )$}
\nonumber \\
& \  \Longleftrightarrow \ &
\textrm{$b\in\ransf A$ and $\exists v_0\in\R^m$, such that $AA^\dagger b \in A \partial f^*(A^\top v_0 )$}
\nonumber \\
& \  \Longleftrightarrow \ &
\textrm{$\exists v_0\in\R^m$, such that $b \in  (A\circ  \partial f^* \circ A^\top)  v_0$ }
\nonumber \\
& \  \Longleftrightarrow \ &
\textrm{$b\in \ransf  (A\circ \partial  f^* \circ A^\top)
=\domsf (A\infimal \partial f)$. }
\nonumber
\end{eqnarray}
\end{proof}

The following result is an immediate consequence.
\begin{corollary} \label{c_exi_p2}
The solution set of \eqref{p2} is non-empty for any $b\in\ransf A$, if and only if $\ransf  (A \circ \partial f^* \circ A^\top) = \ransf A$.
\end{corollary}
\begin{proof}
Clear by Theorem \ref{t_ex_p2}. Alternatively, this can also be obtained from  Lemma \ref{l_X_p2}-(i). 
First, observe that $b\in\ransf A$ is necessary for the solution existence, to meet the hard constraint of \eqref{p2}.  From Lemma \ref{l_X_p2}, $X\ne \varnothing$ with any $b\in\ransf A$ requires that $\big(  \mcup_{v\in\R^m} \partial f^*(A^\top v ) - A^\dagger b \big) \mcap \kersf A \ne\varnothing$ holds for all $b\in\ransf A$. Since $A^\dagger b \in \ransf A^\top$, this further implies that the projection of the set  $ \mcup_{v\in\R^m} \partial f^*(A^\top v )$ onto $\ransf A^\top$ could cover the entire subspace $\ransf A^\top$. In other words, for any given $b_0 \in \ransf A$, there correspondingly exists at least $v_0\in\R^m$, such that $\cP_{\ransf A^\top} ( \partial f^*(A^\top v_0 ) ) \owns A^\dagger b_0$, i.e.,   $ (A\circ  \partial f^* \circ A^\top) ( v_0 )  \owns  b_0$. This is required to hold for any $b_0\in\ransf A$, thus, it is equivalent to say $\ransf (A\circ  \partial f^* \circ A^\top) \supseteq \ransf A$. Since  $\ransf (A\circ  \partial f^* \circ A^\top) \subseteq \ransf A$, we finally have $\ransf (A\circ  \partial f^* \circ A^\top) = \ransf A$.
\end{proof}

\begin{remark}
{\rm (i)} Recalling problem \eqref{p1}, the point $x_\ksf^\star$ exists automatically whenever $x_\rsf^\star$ exists. In comparison, the existence of $x_\rsf^\star = A^\dagger b$ for problem \eqref{p2} is independent of $x_\ksf^\star$. Hence, the existence of $x_\ksf^\star$ requires additional analysis. This constitutes a fundamental distinction between the two problems.

{\rm (ii)} Again, it is implicitly assumed by \eqref{e2} that $\ransf\partial f\cap \ransf A^\top \ne \varnothing$, which is a fundamental viability condition for the operators $A\circ \partial f^* \circ A^\top$ and $A\infimal \partial f$.
\end{remark}

Based on Theorem \ref{t_ex_p2} and Corollary \ref{c_exi_p2}, the solution set can be refined as follows.
\begin{theorem} \label{t_X_p2}
The solution set of \eqref{p2} is given as

{\rm (i)} $\displaystyle X =   A^\dagger b  + \big(  \mcup_{v\in (A\infimal \partial f) (b) } \partial f^*(A^\top v ) -A^\dagger b \big) \mcap \kersf A $.

{\rm (ii)} $X = x^\star  + \big(  \mcup_{v\in\R^m: A^\top v\in\partial f(x^\star)}  \partial f^*(A^\top v ) -x^\star \big) \mcap \kersf A $, if a particular solution $x^\star$ is given. 
\end{theorem}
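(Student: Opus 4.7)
The plan is to prune the union $\mcup_{v\in\R^m}$ appearing in Lemma \ref{l_X_p2} by keeping only those multipliers $v$ that can actually contribute an element to the kernel-intersection. Since we are working inside the formula for $X$, we implicitly assume $X\ne\varnothing$, hence $b\in\ransf A$ and $AA^\dagger b=b$.

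For part (i), I begin with $X = A^\dagger b + \big(\mcup_{v\in\R^m} \partial f^*(A^\top v) - A^\dagger b\big) \mcap \kersf A$ from Lemma \ref{l_X_p2}-(i). An element $w\in \partial f^*(A^\top v)$ belongs to $A^\dagger b + \kersf A$ if and only if $A(w - A^\dagger b)=0$, i.e., $Aw=b$, which is to say $b\in A(\partial f^*(A^\top v)) = (A\circ \partial f^* \circ A^\top)(v)$. By the defining relation $A\infimal \partial f = (A\circ \partial f^* \circ A^\top)^{-1}$ recalled in the remark after Theorem \ref{t_X_p1}, this is equivalent to $v\in (A\infimal \partial f)(b)$. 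Indices $v\notin (A\infimal \partial f)(b)$ therefore contribute no element to the intersection, and the outer union can be restricted accordingly.

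For part (ii), starting from $X = x^\star + \big(\mcup_{v\in\R^m} \partial f^*(A^\top v) - x^\star\big) \mcap \kersf A$ in Lemma \ref{l_X_p2}-(ii), the same calculation with $x^\star$ in place of $A^\dagger b$ (using $Ax^\star=b$) pins down the effective range as $v\in (A\infimal \partial f)(b)$. It then remains to show
\[
(A\infimal \partial f)(b) = \{v\in\R^m : A^\top v\in \partial f(x^\star)\}.
\]
The inclusion $\supseteq$ is immediate: $A^\top v\in \partial f(x^\star)$ gives $x^\star\in \partial f^*(A^\top v)$, so $b=Ax^\star\in (A\circ \partial f^* \circ A^\top)(v)$. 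For $\subseteq$, take $v\in (A\infimal \partial f)(b)$; there exists $w\in \partial f^*(A^\top v)$ with $Aw=b$, so $A^\top v\in \partial f(w)$ and hence $f(y)\ge f(w)+\langle v, Ay-Aw\rangle$ for every $y$. Setting $y=x^\star$ yields $f(x^\star)\ge f(w)$; combined with $f(w)\ge f(x^\star)$ (primal optimality of $x^\star$ together with feasibility of $w$), this gives $f(w)=f(x^\star)$. Substituting this equality back, the subgradient inequality becomes $f(y)\ge f(x^\star)+\langle A^\top v, y-x^\star\rangle$ for all $y$, i.e., $A^\top v\in \partial f(x^\star)$.

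The main obstacle is precisely the inclusion $\subseteq$ in (ii): it is a saddle-point style fact expressing that the set of dual-optimal multipliers is independent of the chosen primal optimum. The calculation is short but requires the careful sequence of subgradient inequalities above; once this identity is established, parts (i) and (ii) are two parametrizations of the same refined index set.
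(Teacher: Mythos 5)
Your proposal is correct, and it follows the same overall strategy as the paper: prune the index set of the union in Lemma \ref{l_X_p2}. For part (i) the two arguments are essentially identical (the paper invokes Corollary \ref{c_ex_p2}-(iii), you do the element-wise check $w-A^\dagger b\in\kersf A\Leftrightarrow Aw=b\Leftrightarrow v\in(A\infimal\partial f)(b)$; same content). For part (ii), however, you supply a step that the paper's one-line justification glosses over. The paper argues that multipliers $v$ with $x^\star\notin\partial f^*(A^\top v)$ ``can be excluded,'' but a priori such a $v$ could still contribute a \emph{different} solution $w\ne x^\star$ with $w\in\partial f^*(A^\top v)$ and $w-x^\star\in\kersf A$, and discarding $v$ would then lose $w$ from the representation. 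Your saddle-point argument --- $A^\top v\in\partial f(w)$, $Aw=Ax^\star=b$, optimality forces $f(w)=f(x^\star)$, and re-substituting gives $A^\top v\in\partial f(x^\star)$ --- is exactly what rules this out, and it simultaneously establishes the identity $(A\infimal\partial f)(b)=\{v\in\R^m:A^\top v\in\partial f(x^\star)\}$, which shows that (i) and (ii) restrict to the same index set and that the dual-certificate set is independent of the chosen primal optimum. The same reasoning appears elsewhere in the paper (e.g., in Lemma \ref{l_descent}-(ii) and in the contradiction argument for the final uniqueness theorem), so your proof is consistent with the paper's toolkit while being more complete at the one point where the paper's own proof is too terse.
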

\begin{proof}
(i) By Theorem \ref{t_ex_p2},  $X\ne \varnothing$ is equivalent to  $\exists v_0\in\R^m$, such that $b \in  (A\circ  \partial f^* \circ A^\top)  (v_0)$. This is $v_0\in  (A\circ  \partial f^* \circ A^\top)^{-1}  (b) = (A\infimal \partial f) (b)$. This shows that the union of $v$ does not need to take those outside of $(A\infimal \partial f) (b)$ into account.

(ii) If a solution $x^\star$ is provided, it implies that $X\ne\varnothing$ and $0\in  \mcup_{v\in\R^m} \partial f^*(A^\top v ) -x^\star$, 
i.e., $ x^\star  \in \mcup_{v\in\R^m} \partial f^*(A^\top v ) $. This implies that those $v \in\R^m$ satisfying $x^\star \notin \partial f^*(A^\top v)$ can be excluded in this union. In other words, it suffices to take $v$ satisfying $x^\star \in \partial f^*(A^\top v)$  into account. 
\end{proof}

Indeed, Theorem \ref{t_ex_p2} can itself be derived from Theorem \ref{t_X_p2}. Specifically, Theorem \ref{t_X_p2}-(i) states that $X = \varnothing$ if and only if $(A\infimal \partial f)(b) = \varnothing$, i.e., $b \notin \domsf (A\infimal \partial f) = \ransf (A \circ \partial f^* \circ A^\top)$. Regarding Theorem \ref{t_X_p2}-(ii), any $v \in \R^m$ satisfying $A^\top v \notin \partial f(x^\star)$ contradicts the optimality of $x^\star$ and must therefore be excluded. Conversely, for any $v_0 \in \R^m$ such that $A^\top v_0 \in \partial f(x^\star)$, it follows that $\partial f(x^\star) \mcap \ransf A^\top \ne \varnothing$; this is known as the {\it source condition}, and such a vector $A^\top v_0$ is referred to as a {\it dual certificate} \cite{certificate}.

\subsection{Solution properties}
We begin by analyzing the compactness and uniqueness of the solution to \eqref{p2}, and then examine the influence of the hard constraint of $Ax=b$ on the minimization of $f$.

\subsubsection{Compactness and uniqueness}
The results are as follows.
\begin{proposition} \label{p_cmp_p2}
The solution set $X$ of \eqref{p2} is non-empty and compact, if and only if one of the following equivalent conditions holds:

{\rm (i) [unknown $x^\star$]} $ \big(  \mcup_{v\in (A\infimal \partial f) (b) } \partial f^*(A^\top v ) \big)_{\infty} \mcap \kersf A  = \{0\}$; 

{\rm (ii) [known $x^\star$]} $\big(  \mcup_{v\in\R^m: A^\top v\in \partial f(x^\star)} \partial f^*(A^\top v ) \big)_{\infty} \mcap \kersf A = \{0\}$.
\end{proposition}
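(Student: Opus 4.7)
The proof follows the same recipe as Proposition \ref{p_cmp_p1}: read off $X$ from Theorem \ref{t_X_p2} as a translate of an intersection with $\kersf A$, note that $X$ is closed and convex, and convert compactness into a recession-cone statement via Lemma \ref{l_recession}.

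Denote $U_b := \mcup_{v \in (A\infimal \partial f)(b)} \partial f^*(A^\top v)$. The non-emptiness of $X$ is precisely the non-emptiness of $(A\infimal \partial f)(b)$ by Corollary \ref{c_ex_p2}-(iii), which is already implicit in writing condition (i); Theorem \ref{t_X_p2}-(i) then gives $X = A^\dagger b + (U_b - A^\dagger b) \mcap \kersf A$. Since $f \in \Gamma_0(\R^n)$ and the feasible set $\{x : Ax = b\}$ is closed and convex, $X$ is the solution set of a convex program and therefore closed and convex (cf.\ Proposition \ref{p_unique}-(iv)); consequently $X - A^\dagger b = (U_b - A^\dagger b) \mcap \kersf A$ is closed and convex, and compactness of $X$ is equivalent to $(X - A^\dagger b)_{\infty} = \{0\}$. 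Picking any $x^\star \in X$ supplies the base point $x^\star - A^\dagger b \in (U_b - A^\dagger b) \mcap \kersf A$ needed to invoke Lemma \ref{l_recession}-(i)-(ii), yielding $(X - A^\dagger b)_{\infty} = (U_b - A^\dagger b)_{\infty} \mcap \kersf A = (U_b)_{\infty} \mcap \kersf A$. Setting this to $\{0\}$ gives (i). For (ii), Theorem \ref{t_X_p2}-(ii) expresses $X$ with $U_b$ replaced by $\tilde U := \mcup_{v \in \R^m:\, A^\top v \in \partial f(x^\star)} \partial f^*(A^\top v)$; one verifies $U_b = \tilde U$ by noting that $v \in (A\infimal \partial f)(b)$ iff there exists $y$ with $Ay = b$ and $y \in \partial f^*(A^\top v)$, and a dual-certificate argument using the Fenchel identity together with the fact that $Ay = Ax^\star = b$ and $f(y) = f(x^\star)$ on $X$ shows this is equivalent to $A^\top v \in \partial f(x^\star)$ for the fixed solution $x^\star$.

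The delicate point is that the union $U_b$ of convex subdifferential sets is in general not itself convex, while Lemma \ref{l_recession}-(i) is stated for closed convex $C$. The way around this is to avoid taking the recession cone of $U_b$ in isolation: the object that actually appears in the calculation is the recession cone of the closed convex set $X - A^\dagger b = (U_b - A^\dagger b) \mcap \kersf A$. The inclusion $(X - A^\dagger b)_{\infty} \subseteq (U_b)_{\infty} \mcap \kersf A$ is immediate from the definition and does not require convexity of $U_b$; the converse uses that any $d \in (U_b)_{\infty} \mcap \kersf A$, together with the base point $x^\star \in U_b$, yields $x^\star + td \in U_b \cap (A^\dagger b + \kersf A) = X$ for all $t > 0$, so $d \in X_\infty = (X - A^\dagger b)_\infty$. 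This bookkeeping fills the gap left by the non-convexity of $U_b$ and closes the proof.
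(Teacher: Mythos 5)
Your proof is correct and follows the same route the paper takes, which for this proposition is just the one-line instruction ``follow the proof of Proposition \ref{p_cmp_p1}'' applied to the solution-set formula of Theorem \ref{t_X_p2}. Where you go beyond the paper is in flagging and repairing the one real issue: the set $U_b=\mcup_{v\in (A\infimal \partial f)(b)}\partial f^*(A^\top v)$ is a union of closed convex sets and need not be convex (or closed), so Lemma \ref{l_recession}-(i) does not apply to it verbatim, and the paper glosses over this. Your direct two-inclusion argument --- using that $X=U_b\mcap (A^\dagger b+\kersf A)$ is itself closed and convex, and that every constituent $\partial f^*(A^\top v)$ of the union contains the common point $x^\star$ --- is the right patch; the same observation also justifies your identification $U_b=\tilde U$, since $A^\top v\in\partial f(y)$ for some solution $y$ gives $f(z)\ge f(y)+\langle A^\top v, z-y\rangle = f(x^\star)+\langle A^\top v, z-x^\star\rangle$ because $f(y)=f(x^\star)$ and $\langle v, A(x^\star-y)\rangle=0$. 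One small point to make explicit: the forward inclusion $(X-A^\dagger b)_\infty\subseteq (U_b)_\infty\mcap\kersf A$ that you call ``immediate'' is immediate only under the base-point reading $(U_b)_\infty=\{d: x^\star+td\in U_b,\ \forall t>0\}$; under the translation-invariance definition $\{d: U_b+d\subseteq U_b\}$ cited in Lemma \ref{l_recession}-(ii) (the two coincide only for closed convex sets) it could fail. Since your converse inclusion also anchors at $x^\star$, you should state once that this is the convention for $(U_b)_\infty$ --- the proposition is only unambiguous under that reading, a point the paper itself leaves implicit.
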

\begin{proof}
(i) is obtained based on Theorem \ref{t_X_p2}-(i) and by following the proof of Proposition \ref{p_cmp_p1}. Note that this condition has implied that $b\in\domsf (A\infimal \partial f)$, and thus $(A\infimal \partial f) (b) \ne \varnothing$.

(ii) follows from Theorem \ref{t_X_p2}-(ii).
\end{proof}

\begin{proposition} \label{p_unique_2}
The solution to \eqref{p2} is unique, if and only if  one of the following equivalent conditions holds:

{\rm (i) [unknown $x^\star$]} $\big(  \mcup_{v\in (A\infimal \partial f) (b) } \partial f^*(A^\top v ) - A^\dagger b \big) \mcap \kersf A$ is singleton;
 
{\rm (ii) [known $x^\star$]} $\big(  \mcup_{v\in\R^m: A^\top v\in \partial f(x^\star)} \partial f^*(A^\top v ) - x^\star  \big) \mcap \kersf A = \{0\}$.
\end{proposition}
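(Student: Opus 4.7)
The plan is to obtain both equivalences as essentially immediate consequences of Theorem~\ref{t_X_p2}, which already gives closed-form expressions for the solution set $X$ in the two scenarios (unknown and known particular solution). Since uniqueness of the solution simply means $X$ is a singleton, the strategy is to translate this condition through the affine structure of those expressions.

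For part (i), I would start from Theorem~\ref{t_X_p2}-(i), which writes
$X = A^\dagger b + S$, where $S := \big(\mcup_{v\in (A\infimal \partial f)(b)} \partial f^*(A^\top v) - A^\dagger b\big) \mcap \kersf A$. Since $A^\dagger b$ is a fixed vector, $X$ is a singleton if and only if $S$ is. Note that whenever $X\neq\varnothing$, we automatically have $S\neq\varnothing$ (since $x^\star_\ksf = x^\star - A^\dagger b \in S$ for any solution $x^\star$), so this equivalence is well-posed. This yields (i) immediately.

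For part (ii), I would likewise start from Theorem~\ref{t_X_p2}-(ii), which writes $X = x^\star + T$ with $T := \big(\mcup_{v\in\R^m:\, A^\top v\in\partial f(x^\star)} \partial f^*(A^\top v) - x^\star\big) \mcap \kersf A$. The key verification here is that $0\in T$ whenever $x^\star$ is a solution: from the Fermat step \eqref{e2} in the proof of Lemma~\ref{l_X_p2}, the existence of $x^\star$ guarantees some $v_0\in\R^m$ with $A^\top v_0\in\partial f(x^\star)$, which via $(\partial f)^{-1} = \partial f^*$ gives $x^\star\in\partial f^*(A^\top v_0)$; hence $0 = x^\star - x^\star$ lies in the translated union, and trivially $0\in\kersf A$. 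Combined with the fact that $X = x^\star + T$, uniqueness $X=\{x^\star\}$ becomes exactly $T = \{0\}$, which is (ii).

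The only mildly delicate point, and the closest thing to an obstacle, is this verification that $0\in T$ in part (ii), so that the singleton condition can be sharpened to equality with $\{0\}$; it is the analogue of the corresponding step in Proposition~\ref{p_unique_p1}-(ii). Beyond that, no further machinery such as monotonicity or recession cones is required, because Theorem~\ref{t_X_p2} has already absorbed all the nontrivial structural work into the two explicit expressions.
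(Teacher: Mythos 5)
Your proposal is correct and follows exactly the route the paper takes: the paper's own proof is simply ``Clear by Theorem \ref{t_X_p2},'' and your argument fills in precisely the details that justify that claim, namely translating the singleton condition through the affine expressions $X = A^\dagger b + S$ and $X = x^\star + T$ and checking $0\in T$ via \eqref{e2}. No gap; your write-up is if anything more explicit than the paper's.
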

\begin{proof}
Clear by Theorem \ref{t_X_p2}.
\end{proof}

\subsubsection{Influence of the constraint $Ax=b$}
\eqref{p2} is a constrained optimization problem. Can we determine whether the constraint $Ax = b$ affects the minimization of $f$? In other words, if $\min f$ exists, is $(A\infimal f) (b)$ strictly greater than $\min f$?
The solution set provided in Theorem \ref{t_X_p2} offers a criterion to determine whether $Ax = b$ influences the minimization of $f$.
\begin{proposition}  \label{p_affect}
Assuming both of constrained \eqref{p2} and unconstrained $\min f$ have solutions, the constraint of $Ax=b$ has no effect upon the minimization of $f$ (i.e., $(A\infimal f) (b) = \min_{x: Ax=b} f(x) = \min f$), if and only if any of the following equivalent conditions holds:

{\rm (i)}  $ \big( \partial f^*(0) - A^\dagger b \big) \mcap \kersf A
\ne \varnothing$;

{\rm (ii)} $b\in (A\circ  \partial f^*) (0)$.
\end{proposition}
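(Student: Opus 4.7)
The plan is to reduce the hypothesis ``the constraint has no effect'' to an intersection condition and then unpack it in two equivalent ways, one giving (ii) directly and one giving (i) via the subspace decomposition used throughout the paper.

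First, I would identify the set of unconstrained minimizers of $f$. Since $f \in \Gamma_0(\R^n)$, Fermat's rule together with $(\partial f)^{-1} = \partial f^*$ gives
\[
\Arg\min_{x\in\R^n} f(x) \;=\; \{x\in\R^n : 0\in\partial f(x)\} \;=\; \partial f^*(0).
\]
By the assumption that $\min f$ exists, $\partial f^*(0)\ne\varnothing$, and for any $x\in\partial f^*(0)$ we have $f(x)=\min f$. Consequently, $(A\infimal f)(b)=\min f$ if and only if the feasible set $\{x:Ax=b\}$ meets $\partial f^*(0)$, i.e.
\[
\{x\in\R^n: Ax=b\}\mcap \partial f^*(0)\neq\varnothing. \tag{$\ast$}
\]
The ``if'' direction is immediate (any such $x$ is feasible and attains $\min f$, so must be a solution of \eqref{p2}); the ``only if'' direction follows because any solution $x^\star$ of \eqref{p2} achieving $f(x^\star)=\min f$ lies in $\partial f^*(0)$ and in the feasible set. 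So the whole proposition reduces to proving $(\ast) \Longleftrightarrow \textrm{(i)} \Longleftrightarrow \textrm{(ii)}$.

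Next, for ($\ast$)$\Longleftrightarrow$(ii), I would simply apply $A$: the existence of $x\in\partial f^*(0)$ with $Ax=b$ is, by definition, exactly $b\in A(\partial f^*(0))=(A\circ\partial f^*)(0)$. Conversely, $b\in(A\circ\partial f^*)(0)$ supplies an $x\in\partial f^*(0)$ with $Ax=b$, producing ($\ast$).

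Finally, for ($\ast$)$\Longleftrightarrow$(i), I would use the standard subspace decomposition $x=x_\rsf+x_\ksf$ with $x_\rsf\in\ransf A^\top$ and $x_\ksf\in\kersf A$. The feasibility condition $Ax=b$ (given $b\in\ransf A$, which is necessary for a solution of \eqref{p2} to exist) forces $x_\rsf=A^\dagger b$. Hence $x$ lies in $\{x:Ax=b\}\mcap\partial f^*(0)$ precisely when $x_\ksf\in\kersf A$ and $A^\dagger b+x_\ksf\in\partial f^*(0)$, i.e.\ $x_\ksf\in(\partial f^*(0)-A^\dagger b)\mcap\kersf A$. This intersection is nonempty iff (i) holds.

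The only delicate point is handling the degenerate case $b\notin\ransf A$: then \eqref{p2} has no solution (contradicting the standing assumption), and simultaneously (i) fails since $A^\dagger b$ is still well defined but $A^\dagger b + x_\ksf \in\partial f^*(0)$ with $x_\ksf\in\kersf A$ would force $b=A(A^\dagger b + x_\ksf)\in\ransf A$. So this case is vacuous, and the equivalences above close the proof.
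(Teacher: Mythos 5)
Your proposal is correct and follows essentially the same route as the paper: both identify $\Arg\min f=\partial f^*(0)$ via Fermat's rule, parametrize the feasible set as $A^\dagger b+\kersf A$ to obtain condition (i), and pass to (ii) by applying $A$ (the paper routes this last step through $A^\dagger b\in\cP_{\ransf A^\top}(\partial f^*(0))$, which is the same computation). The only difference is cosmetic: you state the intermediate intersection condition $(\ast)$ explicitly and derive (i) and (ii) from it in parallel, whereas the paper proves hypothesis $\Leftrightarrow$ (i) first and then (i) $\Leftrightarrow$ (ii).
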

\begin{proof}
(i) Considering $\min_{x\in\R^n} f(x)$ with $f\in\Gamma_0(\R^n)$, we have, by Fermat's rule, that $0\in \partial f(x^\star)$, i.e., $x^\star\in \partial f^*(0)$. 
Recalling \eqref{e2}, if $0\in \partial f(A^\dagger b+x_\ksf)$ for some $x_\ksf\in\kersf A$, then the optimal value of \eqref{p2} keeps the same as $\min f$. The condition is translated as: $\exists x_\ksf\in\kersf A$, such that $0\in \partial f(A^\dagger b+x_\ksf)$, i.e., $A^\dagger b+ x_\ksf \in \partial f^*(0)$. 

On the contrary, if $0\notin \partial f(A^\dagger b+x_\ksf)$, $\forall x_\ksf\in\kersf A$, then the optimal value of \eqref{p2} must be (strictly) greater than $\min f$. This condition is equivalent to $A^\dagger b+x_\ksf \notin \partial f^*(0)$, $\forall x_\ksf \in \kersf A$, i.e., $(A^\dagger b+\kersf A)\mcap \partial f^*(0) =\varnothing$. 

(ii) We develop  the equivalence: 
\[
 \big( \partial f^*(0) - A^\dagger b \big) \mcap \kersf A\ne \varnothing
\Longleftrightarrow
A^\dagger b \in \cP_{\ransf A^\top} ( \partial f^*(0) )
\Longleftrightarrow
 b \in A ( \partial f^*(0) ),
 \]
 since the solution existence of \eqref{p2} has already assumed that $b\in\ransf A$.
\end{proof}


\subsection{Related works and discussions}
Most of the existing works discussed the solution uniqueness of \eqref{p2} based on descent cone and radial cone. We now summarize and unify these results and further show the equivalence of our result with them.

\begin{lemma} \label{l_unique_1}
If $x^\star$ is an optimal solution to \eqref{p2}, then the following hold.

{\rm (i)  \cite[Proposition 4.4]{fadili_cone}} 
$\kersf A\mcap \cD_f(x^\star) = \kersf A\mcap \cR_{\partial f^*(y)}(x^\star)$, $\forall y\in \partial f(x^\star) \mcap \ransf A^\top $.

{\rm (ii)}  $\cR_{\partial f^*(y)}(x^\star) =   \mcup_{v\in\R^m: A^\top v\in \partial f(x^\star)} \partial f^*(A^\top v ) - x^\star$, 
$\forall y\in  \partial f(x^\star) \mcap \ransf A^\top $.
\end{lemma}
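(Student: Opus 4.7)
My plan is to treat both parts by mirroring the argument of Lemma~\ref{l_descent}-(ii), substituting an arbitrary dual certificate $y=A^\top v \in \partial f(x^\star)\cap\ransf A^\top$ for the residual-based vector $A^\top r$ appearing there. The two ingredients driving everything are the elementary orthogonality $\langle y,d\rangle=\langle v,Ad\rangle=0$ valid for all $d\in\kersf A$, together with the optimality of $x^\star$ for \eqref{p2}, which forces $f(x^\star+\xi d)=f(x^\star)$ along any feasible direction.

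For part (i) I would prove the two inclusions separately. For $\kersf A\cap\cR_{\partial f^*(y)}(x^\star)\subseteq\kersf A\cap\cD_f(x^\star)$: given $d$ in the left-hand side, the definition of the radial cone produces $\xi>0$ with $x^\star+\xi d\in\partial f^*(y)$, i.e., $y\in\partial f(x^\star+\xi d)$; the subgradient inequality combined with $\langle y,d\rangle=0$ then yields $f(x^\star+\xi d)\le f(x^\star)$, so $d\in\cD_f(x^\star)$. For the reverse inclusion, pick $d\in\kersf A\cap\cD_f(x^\star)$ and $\xi>0$ with $f(x^\star+\xi d)\le f(x^\star)$; since $x^\star+\xi d$ is feasible for \eqref{p2}, optimality upgrades this to equality. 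Plugging the equality into the Young--Fenchel identity $f(x^\star)+f^*(y)=\langle y,x^\star\rangle$ together with $\langle y,\xi d\rangle=0$ gives $f(x^\star+\xi d)+f^*(y)=\langle y,x^\star+\xi d\rangle$, hence $y\in\partial f(x^\star+\xi d)$ and $d\in\cR_{\partial f^*(y)}(x^\star)$.

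For part (ii), after the substitution $y'=A^\top v$ the right-hand side rewrites as $\bigcup_{y'\in\partial f(x^\star)\cap\ransf A^\top}\bigl(\partial f^*(y')-x^\star\bigr)$. One containment is transparent: specializing $\xi=1$ in $\cR_C(x^\star)=\R_+(C-x^\star)$ shows $\partial f^*(y)-x^\star\subseteq\cR_{\partial f^*(y)}(x^\star)$, placing every slice indexed by the particular choice $y'=y$ inside the left-hand side. For the reverse containment I would take $d=\xi(z-x^\star)$ with $\xi>0$ and $z\in\partial f^*(y)$, and search for a dual certificate $v'$ such that $x^\star+d \in \partial f^*(A^\top v')$ with $A^\top v'\in\partial f(x^\star)$, so that $d$ lands in the corresponding slice on the right-hand side.

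The main obstacle is precisely this last step: the family $\{v:A^\top v\in\partial f(x^\star)\}$ is not closed under positive rescaling when $\partial f(x^\star)$ is bounded, so one cannot naively take $v'=\xi v$. My plan is to resolve this by combining part (i) with the expression of the solution set of \eqref{p2} furnished by Theorem~\ref{t_X_p2}: the displacement $d$, after projection onto $\kersf A$, exhibits $x^\star+d$ as another optimum of \eqref{p2}, for which the source condition supplies a fresh certificate $v'$ with $A^\top v'\in\partial f(x^\star+d)=\partial f(x^\star)$ (the last equality following from Proposition~\ref{p_unique}-(iii) applied to \eqref{p2}), and this $v'$ is the witness required on the right-hand side. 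Once this scaling issue is absorbed, the same orthogonality-plus-Young--Fenchel toolkit used for (i) closes the equality.
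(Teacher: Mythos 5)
Your treatment of part (i) is correct and complete: the two inclusions, driven by $\langle y,d\rangle=0$ for $d\in\kersf A$, the feasibility of $x^\star+\xi d$, and the Young--Fenchel identity, reproduce exactly the mechanism of Lemma \ref{l_descent}-(ii); the paper itself only defers to \cite[Proposition 4.4]{fadili_cone} here, so you supply more than it does. The problems are in part (ii). First, your ``transparent'' containment only places the single slice $\partial f^*(y)-x^\star$ inside $\cR_{\partial f^*(y)}(x^\star)$; the right-hand side is a union over \emph{all} certificates $v$ with $A^\top v\in\partial f(x^\star)$, and for $A^\top v'\neq y$ the inclusion $\partial f^*(A^\top v')-x^\star\subseteq\cR_{\partial f^*(y)}(x^\star)$ is never addressed (inside $\kersf A$ it would follow by applying part (i) twice, once with the certificate $A^\top v'$ and once with $y$, but that step is missing from your plan).

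Second, and more seriously, your route to the reverse containment hinges on $x^\star+d$ being another optimum of \eqref{p2}, which requires $d\in\kersf A$; a general element of $\cR_{\partial f^*(y)}(x^\star)$ need not lie in $\kersf A$, and ``projecting $d$ onto $\kersf A$'' proves something about $\cP_{\kersf A}d$, not about $d$. This gap cannot be closed, because the unrestricted set equality is false: take $f=\|\cdot\|_1$ on $\R^2$, $A=[1\ \ 0]$, $b=1$, so $x^\star=(1,0)$, the unique certificate is $y=(1,0)$, and $\partial f^*(y)=[0,+\infty)\times\{0\}$; the left side is $\R_+\big([-1,+\infty)\times\{0\}\big)=\R\times\{0\}$, while the right side is $[-1,+\infty)\times\{0\}$. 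Only the version intersected with $\kersf A$ (which is all that Lemma \ref{l_unique_1} is used for in the sequel) survives, and there your machinery does work. A further slip: $\partial f(x^\star+d)=\partial f(x^\star)$ does not follow from $f(x^\star+d)=f(x^\star)$ (Proposition \ref{p_unique}-(iii) concerns values, not subdifferentials); what is true, and what you actually need, is that any $y\in\partial f(x^\star)\mcap\ransf A^\top$ remains a subgradient at every other optimum, since $f(z)\ge f(x^\star)+\langle y,z-x^\star\rangle=f(x^\star+d)+\langle y,z-(x^\star+d)\rangle$ whenever $d\in\kersf A$. For comparison, the paper's own proof of (ii) is a short chain of rewritings ending with ``taking $\xi=1$,'' which likewise only delivers one direction cleanly, so your instinct that the scaling of certificates is the crux is well placed; it simply is not resolved by the steps you propose.
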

\begin{proof}
(i) See the proof of \cite[Proposition 4.4]{fadili_cone}.

(ii) We develop
\begin{eqnarray}
&& x \in \cR_{\partial f^*(y)}(x^\star) , \quad  
\forall y\in  \partial f(x^\star) \mcap \ransf A^\top
\nonumber \\
  & \Longleftrightarrow &
x \in \cR_{\partial f^*(A^\top v)}(x^\star) , \quad  
\forall v \textrm {\ satisfying\ } A^\top v\in \partial f(x^\star) 
\nonumber \\
&  \Longleftrightarrow &
 x \in \R_+  \big( \partial f^*(A^\top v ) -x^\star \big), \quad \forall v\in\R^m: A^\top v\in\partial f(x^\star)
\nonumber \\
&  \Longleftrightarrow &
x \in \xi  \big(  \mcup_{v\in\R^m: A^\top v\in \partial f(x^\star)} \partial f^*(A^\top v ) -x^\star \big), \quad \forall \xi \ge 0
\nonumber \\
&  \Longleftrightarrow &
x \in  \mcup_{v\in\R^m: A^\top v\in \partial f(x^\star)} \partial f^*(A^\top v ) -x^\star, \quad \textrm{taking\ } \xi = 1.
\nonumber
\end{eqnarray}
\end{proof}

Then the solution uniqueness of \eqref{p2} is summarized below.
\begin{theorem}
The solution $x^\star$ to \eqref{p2} is unique, if and only if any of the following equivalent conditions holds.

{\rm (i) \cite[Proposition 4.3]{fadili_cone},  \cite[Proposition 3.4]{fadili_iiima}} $\kersf A\mcap \cD_f(x^\star) = \{0\}$. 

{\rm (ii)} $\big(  \mcup_{v\in\R^m: A^\top v\in\partial f(x^\star)}  \partial f^*(A^\top v ) -x^\star \big) \mcap \kersf A = \{0\}$
\end{theorem}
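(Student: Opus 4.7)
The plan is to chain together the results already established in the excerpt, since almost all of the machinery is in place. The equivalence of condition (ii) with uniqueness of $x^\star$ is already precisely the content of Proposition \ref{p_unique_2}-(ii), so the only real work is establishing (i) $\Longleftrightarrow$ (ii); once that is done, both conditions are equivalent to uniqueness.

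First I would observe that since $x^\star$ is a solution to \eqref{p2}, the optimality condition from \eqref{e2} guarantees the existence of at least one $v_0 \in \R^m$ with $A^\top v_0 \in \partial f(x^\star)$, so the index set $\{v\in\R^m:\, A^\top v\in\partial f(x^\star)\}$ is non-empty and, equivalently, $\partial f(x^\star)\mcap \ransf A^\top \ne \varnothing$ (the source condition). This ensures that there exists some $y\in\partial f(x^\star)\mcap\ransf A^\top$ to which Lemma \ref{l_unique_1}-(i) can be applied, giving
\[
\kersf A \mcap \cD_f(x^\star) \;=\; \kersf A \mcap \cR_{\partial f^*(y)}(x^\star).
\]
Then, applying Lemma \ref{l_unique_1}-(ii) to the same $y$, I rewrite the right-hand side as
\[
\kersf A \mcap \cR_{\partial f^*(y)}(x^\star) \;=\; \kersf A \mcap \Bigl( \bigcup_{v\in\R^m:\, A^\top v\in\partial f(x^\star)} \partial f^*(A^\top v) - x^\star \Bigr).
\]
Chaining these two identities yields (i) $\Longleftrightarrow$ (ii) at the level of the sets themselves (not just in whether they reduce to $\{0\}$), which is a stronger statement than needed.

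Finally, I would invoke Proposition \ref{p_unique_2}-(ii) to conclude that (ii) is a necessary and sufficient condition for uniqueness of $x^\star$, and hence so is (i) by the equivalence just established. I do not anticipate any serious obstacle: the only subtle point is making sure the index set in the union is non-empty (so that Lemma \ref{l_unique_1} is actually applicable), which is automatic from the assumed optimality of $x^\star$ via the source condition. No additional calculation or case analysis is required.
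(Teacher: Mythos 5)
Your proposal is correct and follows essentially the same route as the paper, which likewise justifies the theorem by combining Lemma \ref{l_unique_1} (for the set-level identity $\kersf A\mcap \cD_f(x^\star)=\kersf A\mcap\big(\mcup_{v}\partial f^*(A^\top v)-x^\star\big)$) with Proposition \ref{p_unique_2}-(ii); your explicit remark that the source condition guarantees a nonempty index set is a worthwhile clarification consistent with \eqref{e2}. The paper only adds an optional self-contained contradiction argument showing directly that a nonzero element of the set in (ii) yields a second solution, which your chain of citations makes unnecessary.
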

\begin{proof}
Clear by either Lemma \ref{l_unique_1} or Proposition \ref{p_unique_2}-(ii). We here provide alternative proof by contradiction.

Assuming $\big( \big(  \mcup_{v\in\R^m: A^\top v\in\partial f(x^\star)}  \partial f^*(A^\top v ) -x^\star \big) \mcap \kersf A \big) \backslash \{0\} \ne \varnothing$, take $0\ne w\in \big(  \mcup_{v\in\R^m: A^\top v\in\partial f(x^\star)}  \partial f^*(A^\top v ) -x^\star \big) \mcap \kersf A$. Then 
$\exists v_0\in\R^m$, suth that $x^\star +w \in  \partial f^*(A^\top v_0 )$ and $w\in \kersf A$, which implies 
$A^\top v_0 \in \partial f(x^\star +w)$. This indicates 
$f(x^\star) \ge  f(x^\star+w)+\langle A^\top v_0 |-w\rangle
= f(x^\star+w)$. This shows that $x^\star +w$ is also a solution to 
\eqref{p2}, which contradicts to the uniqueness of $x^\star$. 
\end{proof}

\subsection{Infimal postcomposition}
\subsubsection{Exactness of infimal postcomposition}
It is easy to recognize that the problem \eqref{p2} has close connections with exactness of infimal postcomposition of $f$ by $A$, which is given as \cite[Definition 12.34]{plc_book}:
\[
A\infimal f: b \mapsto \inf_{x\in\R^n} f(x),\quad 
\textrm{s.t.\ }\ Ax=b.
\] 

The following facts regarding $A\infimal f$ will be useful in our expositions. Refer to Appendices \ref{app_2} and \ref{app_3} for the detailed proof.
\begin{fact} \label{f_infimal}
Given $f\in \Gamma_0(\R^n)$ and a linear mapping $A:\R^n\mapsto \R^m$, the following hold.

{\rm (i)}  $f^*\in\Gamma_0(\R^n)$, and $f^*\circ A^\top$ is convex and l.s.c., but not necessarily proper. 

{\rm (ii)} $f^*\circ A^\top$ is proper, and furthermore, $f^*\circ A^\top \in \Gamma_0(\R^m)$, if and only if $\ran A^\top \mcap \dom f^* \ne \varnothing$.

{\rm (iii)}  $A\triangleright f$ is convex, but not necessarily proper and closed.     

{\rm (iv)} If $A\triangleright f$ is exact, this is proper (and convex), but not necessarily closed.     

{\rm (v)} $\partial (A\infimal f)$ is not necessarily maximally monotone.

{\rm (vi)} $\partial (A\infimal f)$ is  maximally monotone, if $A\triangleright f$ is exact and closed.
\end{fact}

\begin{fact} \label{f_dom}
If  $A\infimal f\in\Gamma_0(\R^m)$, then the following hold.

{\rm (i)}  $\domsf (A\infimal \partial f) 
 \subseteq  A ( \domsf \partial f) \subseteq A(\domsf f)$;
 
{\rm (ii)} $\risf \ \domsf (A\infimal f) 
\subseteq \domsf \partial  (A \infimal f) \subseteq \domsf (A\infimal f) = A(\domsf f)$.
\end{fact}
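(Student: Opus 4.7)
The plan is to treat (i) and (ii) separately, each by chaining together standard identities and inclusions from convex analysis. Both parts follow the same pattern: rewrite each set as an image/range, then invoke a classical inclusion from \cite{plc_book}.

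For (i), I would simply exhibit the chain already derived in the proof of Corollary~\ref{c_ex_infimal_2}. Namely, by the definition of parallel composition and the identity $\partial f^* = (\partial f)^{-1}$,
\begin{equation*}
\domsf (A\infimal \partial f) \;=\; \ransf (A \circ \partial f^* \circ A^\top) \;=\; A\big(\ransf (\partial f^* \circ A^\top)\big) \;\subseteq\; A(\ransf \partial f^*) \;=\; A(\domsf \partial f),
\end{equation*}
and the standard inclusion $\domsf \partial f \subseteq \domsf f$ from \cite[Corollary 16.49]{plc_book} gives $A(\domsf \partial f) \subseteq A(\domsf f)$.

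For (ii), I would first establish the equality $\domsf (A\infimal f) = A(\domsf f)$. By the definition $(A\infimal f)(y) = \inf\{f(x): Ax=y\}$, one has $(A\infimal f)(y)<+\infty$ iff there exists $x\in\domsf f$ with $Ax=y$, i.e., $y\in A(\domsf f)$; and the assumption $A\infimal f\in\Gamma_0(\R^m)$ rules out the value $-\infty$ on $A(\domsf f)$, so the two sets coincide. The right inclusion $\domsf \partial (A\infimal f)\subseteq \domsf (A\infimal f)$ is immediate from the definition of the subdifferential, since the subdifferential is empty wherever the function takes the value $+\infty$. The left inclusion $\risf\ \domsf (A\infimal f)\subseteq \domsf \partial (A\infimal f)$ is the classical fact that every function in $\Gamma_0(\R^m)$ is subdifferentiable throughout the relative interior of its effective domain, see, e.g., \cite[Proposition 16.27]{plc_book} or \cite[Theorem 23.4]{rtr_book}. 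Combining these three pieces yields (ii).

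The only nontrivial point is the equality $\domsf (A\infimal f) = A(\domsf f)$: showing inclusion ``$\supseteq$'' requires that the infimum defining $(A\infimal f)(y)$ is strictly greater than $-\infty$ for $y\in A(\domsf f)$, and this is precisely where the hypothesis $A\infimal f\in\Gamma_0(\R^m)$ (in particular properness) is invoked. Without it the containment can collapse, as illustrated by improper examples already discussed around \eqref{y5}. All other steps are direct applications of results already cited in the paper.
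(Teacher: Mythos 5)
Your proof is correct and follows essentially the same route as the paper: part (i) is exactly the chain from the proof of Corollary~\ref{c_ex_infimal_2}-(i), and part (ii) assembles the same three ingredients the paper cites (subdifferentiability on the relative interior, $\domsf\partial g\subseteq\domsf g$, and $\domsf(A\infimal f)=A(\domsf f)$). One small correction to your closing remark: the equality $\domsf (A\infimal f) = A(\domsf f)$ does \emph{not} actually need properness, since $y\in A(\domsf f)$ already gives $(A\infimal f)(y)\le f(x)<+\infty$ for some $x\in\domsf f$ with $Ax=y$, and membership in the effective domain only requires the value to be $<+\infty$ (even $-\infty$ qualifies); the hypothesis $A\infimal f\in\Gamma_0(\R^m)$ is genuinely needed for the inclusion $\risf\ \domsf (A\infimal f)\subseteq\domsf\partial(A\infimal f)$, which fails for improper or non-l.s.c.\ convex functions.
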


Obviously, the exactness of $A\infimal f$ at a point $b \in \R^m$ is equivalent to the solution existence of \eqref{p2} with this $b$. Thus, 
Theorem \ref{t_ex_p2} can be translated to the conditions for the exactness of $A\infimal f$.
\begin{proposition} \label{p_ex_infimal_2}
Assuming $A\infimal f\in\Gamma_0(\R^m)$, then  $A\infimal f$ is exact

{\rm (i)}  at a point $b\in\R^m$, if and only if
$b \in \domsf (A\infimal \partial f)$.

{\rm (ii)} in $A(\domsf f)$, if and only if
$\domsf (A\infimal \partial f) = A(\domsf f)$.

{\rm (iii)} in $\domsf \partial (A\infimal  f)$, if
$\domsf (A\infimal \partial f) = A(\domsf f)$.
\end{proposition}
\begin{proof}
(i)-(ii) Clear by Theorem \ref{t_ex_p2} and Fact \ref{f_dom}-(i).

(iii) Clear by Fact \ref{f_dom}-(ii).
\end{proof}

\subsubsection{Relation between both problems}
The infimal postcomposition also provides an interesting link between the regularized least-squares  \eqref{p1} and its constrained counterpart \eqref{p2}. 

First, the solution existence of \eqref{p1} can also be associated with the exactness of $A\infimal f$.
\begin{proposition} \label{p_both}
For \eqref{p1} with given $b \in\R^m$, $X\ne\varnothing$, if and only if 

{\rm (i)} $b\in \ransf \big(I_m +\partial (A\infimal f) \big)$;

{\rm (ii)} $A\infimal f$ is exact at $J_{\partial (A\infimal f)} (b)$.
\end{proposition}
\begin{proof}
We take a different approach from the proof of Theorem \ref{t_X_p1}-(ii) as below:
\begin{eqnarray}
&& X \ne\varnothing  \textrm{\ for \eqref{p1} with given $b\in\R^m$}
\nonumber \\
  & \Longleftrightarrow &
\Arg\min_{t,x} f(x)+\frac{1}{2} \big\| t - b \big\|^2 +\iota_{ \{ x: Ax=t\} } (x) \ne\varnothing
\nonumber \\
&  \Longleftrightarrow &
 \Arg \min_{t\in\R^m} \underbrace{ \Big( 
\min_{x\in\R^n}  f(x)+ \iota_{ \{ x: Ax=t\} } (x) \Big) }_{:=(A\infimal f)(t)} 
+ \frac{1}{2}\|t-b\|^2 \ne\varnothing
\nonumber \\
&  \Longleftrightarrow &
\left\{ \begin{array}{l}
\textrm{$t^\star = ( I_m +\partial (A\infimal f) )^{-1} (b)$ exists for  $b\in\R^m$}  \\
\textrm{$x^\star$ exists for this $t^\star$}  \\
\end{array} \right.
\nonumber \\
&  \Longleftrightarrow &
\left\{ \begin{array}{l}
b\in \ransf  ( I_m +\partial (A\infimal f) )   \\
A\infimal f \textrm{\ is exact at $t^\star = J_{\partial (A\infimal f)} (b)$} 
\end{array} \right.
\nonumber 
\end{eqnarray}

It is emphasized that although $J_{\partial (A\infimal f)} (b)$ satisfies $J_{\partial (A\infimal f)} (b) \in \domsf \partial (A\infimal f) $, it is not guaranteed that $J_{\partial (A\infimal f)} (b) \in \domsf (A\infimal \partial f)$. This is the reason why condition (ii) is necessary. Also note that the resolvent $J_{\partial (A\infimal f)} $ may not have a full domain, since the condition $A\infimal f\in\Gamma_0(\R^m)$ does not necessarily hold. There is no need to enforce $A\infimal f\in \Gamma_0(\R^m)$,  since it is sufficient but not necessary for the maximal monotonicity of $\partial (A\infimal f)$.
\end{proof}

\begin{theorem} \label{t_both}
The solution set to \eqref{p1} is non-empty for any $b\in\R^m$, if and only if 

{\rm (i)} $\partial (A \triangleright f)$ is maximally monotone.

{\rm (ii)} $A\infimal f$ is exact in $\domsf \partial (A\infimal f)$;
\end{theorem}
\begin{proof}
Clear by the proof of Proposition \ref{p_both}, Fact \ref{f_infimal}-(v),   convexity of $A\infimal f$ (by Fact \ref{f_infimal}-(ii)) and Minty's theorem \cite[Theorem 21.1]{plc_book}.
\end{proof}

\begin{remark}
It is known by Proposition \ref{p_ex_infimal_2}-(i) that $A\infimal f$  is exact in $\domsf (A\infimal \partial f)$. However, requiring that $A\infimal f$ be exact on $\domsf \partial (A\infimal f)$, as in Theorem \ref{t_both}-(ii), is subtle due to the difficulty in comparing $\domsf (A\infimal \partial f)$ and $\domsf \partial (A\infimal f)$.
\end{remark}

It is also interesting to reexpress the solution set of \eqref{p1} in terms of that of \eqref{p2}.
\begin{corollary}
The solution set of \eqref{p1} can be  alternatively expressed as
\[
X = A^\dagger t^\star + \big(  \mcup_{v\in (A\infimal \partial f) (t^\star) } \partial f^*(A^\top v ) - A^\dagger t^\star \big) \mcap \kersf A, 
\]
where $t^\star  = J_{\partial (A\infimal f)} (b)$.
\end{corollary}
\begin{proof}
Combine Proposition \ref{p_both}, Theorems \ref{t_both} and \ref{t_X_p2}. 
\end{proof}

The well-known stronger condition  $0\in \risf\ (\ransf A^\top -\ransf \partial f)$  (cf. Theorem \ref{t_ex_p1}-(iv)) would resolve all these issues and ensure that many conditions are satisfied.
\begin{corollary}
If $0\in \risf\ (\ransf A^\top -\ransf \partial f)$, or equivalently, $\risf\ \ransf\partial f\mcap \ransf A^\top \ne\varnothing$, then the following hold.

{\rm (i)} $A\infimal f = (f^*\circ A^\top)^* \in\Gamma_0(\R^m)$;

{\rm (ii)} $\partial (A \triangleright f) = A\infimal \partial f = (A\circ \partial f^* \circ A^\top)^{-1} = \partial (f^* \circ A^\top)^* $ is maximally monotone;

{\rm (iii)} $\risf\ \domsf (A\infimal f) \subseteq \domsf \partial (A \infimal f) = \domsf (A\infimal \partial f) \subseteq \domsf (A\infimal f) =A(\domsf f)$;

{\rm (iv)}  $A\infimal f$ is exact almost everywhere in $A(\domsf f)$; 

{\rm (v)} \eqref{p2} has solutions for almost all $b\in A(\domsf f)$;

{\rm (vi)} \eqref{p1} has solutions for any $b\in\R^m$;
\end{corollary}
\begin{proof}
(i)-(ii): In view of \cite[Corollary 13.40, Corollary 15.28-(i), Proposition 16.42, Corollary 16.53-(i), Proposition 25.41-(iv), Corollary 25.44]{plc_book}. 

(iii) follows from (ii).

(iv) is due to (iii) and Theorem \ref{t_ex_p2}. 

(v) is equivalent to (iv). 

(vi) is due to either (ii) and Theorem \ref{t_ex_p1}-(iii), or 
 (iv) and Proposition \ref{p_both}.
\end{proof}

\subsection{Examples}
We list several (toy) examples in following tables, which support our presented results.
\begin{itemize}
\item Example-1: $\min_{x\in\R} -\log x $, s.t.  $x=0$;   
\item Example-2: $\min_{x_1,x_2\in\R} e^{x_1}$, s.t. $x_1 = 0$;   
\item Example-3: $\min_{x_1,x_2\in\R} e^{x_1}$, s.t. $x_2 = 0$;   
\item Example-4: $\min_{x_1,x_2\in\R} \max\{ e^{x_2} -x_1, 0 \}$, s.t. $x_1 = 0$;   
\item Example-5: $\min_{x_1,x_2\in\R} \max\{ e^{x_2} -x_1, 0 \}$, s.t. $x_2 = 0$.
\end{itemize}

\begin{table}[h!]
  \centering
   \caption{Evaluation criteria for solution properties of \eqref{p2}.}\vspace{-1em}
   \resizebox{1.0\columnwidth}{!} {
 \begin{tabular}{|l||l|l|l|} 
    \hline
    criteria & Example-1 & Example-2 & Example-3 \\
    \hline\hline
fitting into \eqref{p2}  & 
    \tabincell{l}{ $f =-\log$  \\  $A=1$ \\ $b=0$  }  
     &  \tabincell{l}{ $f: (x_1,x_2)\mapsto e^{x_1}$  \\    $A= \begin{bmatrix}
   1 & 0   \end{bmatrix} $ \\ $b= 0$  } 
     &  \tabincell{l}{ $f: (x_1,x_2)\mapsto e^{x_1}$  \\    $A= \begin{bmatrix}
  0 & 1   \end{bmatrix} $ \\ $b= 0$  }  \\
     \hline \hline
{\bf existence} & \faTimes   & \faCheck & \faTimes  \\
     \hline 
$x_\rsf^*$ & $(0,0)$  & $(0, 0)$ & $(0, 0)$    \\
     \hline 
$X$ & $\varnothing$  & $\{0\}\times \R$ & $\varnothing$    \\
     \hline 
$A(\domsf f)$ & 
$(0,+\infty)$ & $\R$ & $\R$   \\
     \hline 
$\partial f^*$ & 
 $\left\{ \begin{array}{ll}
-\frac{1}{u}, & \textrm{if\ }  u \in (-\infty, 0); \\
\varnothing,  & \textrm{otherwise}.
\end{array}\right.$   
 &  \multicolumn{2}{l|}{ $\left\{ \begin{array}{ll}
\{\log u_1\}\times \R, & \textrm{if\ } (u_1,u_2)\in 
(0,+\infty)\times \{0\}; \\
\varnothing,  & \textrm{otherwise}.
\end{array}\right.$ }  \\
     \hline 
$A \circ \partial f^*\circ A^\top $ & 
 $\left\{ \begin{array}{ll}
-\frac{1}{v}, & \textrm{if\ } v \in (-\infty, 0); \\
\varnothing,  & \textrm{otherwise} .
\end{array}\right.$   
 & $\left\{ \begin{array}{ll}
\{\log v\}, & \textrm{if\ } v \in (0,+\infty); \\
\varnothing,  & \textrm{otherwise}.
\end{array}\right.$
 & $\varnothing$   \\
     \hline 
$\domsf (A \infimal \partial f) $ & 
$(0,+\infty)$& $\R$ & $\varnothing$   \\
     \hline 
$\ransf  \partial f \cap \ransf A^\top $ & 
$(-\infty, 0)$ & $(0,+\infty)\times \{0\}$ & $\varnothing$   \\
     \hline 
$b\in \domsf (A \infimal \partial f)  $ & \faTimes   & \faCheck & \faTimes   \\
     \hline 
exactness of $A \infimal f$ at $b$ & \faTimes   & \faCheck & \faTimes   \\
     \hline 
$\mcup_{v\in (A\infimal \partial f) (b)} \partial f^*( A^\top v) $ &
$\varnothing$ & $\partial f^*(A^\top 1)=\{0\}\times \R$ & $\varnothing$   \\
     \hline 
 $b\in (A \circ \partial f^*)(0)$ & \faTimes   & \faTimes & \faTimes   \\
     \hline 
 $(A\infimal f)(b) =\min f$ & \faTimes   & \faTimes & \faTimes   \\

     \hline \hline
{\bf uniqueness}   & \faTimes & \faTimes  & \faTimes \\
     \hline 
$ x^\star $ (chosen) & ---  &  $(0, 0) $   & ---   \\
     \hline 
$C:= \mcup_{v\in (A\infimal \partial f) (b)} \partial f^*( A^\top v) $ &
$\varnothing$ & $\partial f^*(A^\top 1)=\{  0\} \times \R $  &  $\varnothing$\\
     \hline 
$(C -x^\star)\mcap \kersf A $ &
$\varnothing$ & $\{  0\} \times \R$ & $\varnothing$ \\
 \hline 
    \end{tabular} }
    \vskip 0.5em
\label{table_reg}
\vspace*{-0.3cm}
\end{table}

\begin{table}[h!]
  \centering
   \caption{Evaluation criteria for solution properties of \eqref{p2}.}\vspace{-1em}
   \resizebox{1.0\columnwidth}{!} {
 \begin{tabular}{|l||l|l|} 
    \hline
    criteria & Example-4 & Example-5 \\
    \hline\hline
fitting into \eqref{p2}  & 
    \tabincell{l}{ $f: (x_1,x_2)\mapsto \max\{e^{x_2}-x_1,0\}$  \\ 
  $A= \begin{bmatrix}
  1 & 0   \end{bmatrix} $ \\ $b=0$  }  
     &  \tabincell{l}{ $f: (x_1,x_2)\mapsto \max\{e^{x_2}-x_1, 0\}$  \\    $A= \begin{bmatrix}
  0 & 1   \end{bmatrix} $ \\ $b= 0$  }   \\
     \hline \hline
{\bf existence} & \faTimes   & \faCheck  \\
     \hline 
$x_\rsf^*$ & $(0, 0)$  & $(0, 0)$   \\
     \hline 
$X$ & $\varnothing$  & $[1,+\infty)\times \{0\} $   \\
     \hline 
$A(\domsf f)$ & $\R$  & $\R$   \\
     \hline 
$A\circ \partial f^* \circ A^\top$ 
& $\left\{ \begin{array}{ll}
(0, +\infty), & \textrm{if\ } v=0; \\
\varnothing,  & \textrm{otherwise} .
\end{array}\right.$    
& $\left\{ \begin{array}{ll}
\R, & \textrm{if\ } v=0; \\
\varnothing,  & \textrm{otherwise} .
\end{array}\right.$       \\
     \hline 
$\domsf (A \infimal \partial f)$ & 
$(0, +\infty)$ & $\R$  \\
     \hline 
$b\in \domsf (A \infimal \partial f)$ & 
\faTimes & \faCheck  \\
     \hline 
exactness of $A \infimal f$ at $b$ & \faTimes   & \faCheck   \\
     \hline 
$\mcup_{v\in (A\infimal \partial f) (b)} \partial f^*( A^\top v) $ &
$\varnothing$ & $\{(p,q): p\ge e^q \}$    \\
     \hline 
 $b\in (A \circ \partial f^*)(0)$ & \faTimes   & \faCheck    \\
     \hline 
 $(A\infimal f)(b) =\min f$ & \faTimes   & \faCheck   \\
     \hline \hline
{\bf uniqueness}   & \faTimes & \faTimes   \\
     \hline 
$ x^\star $ (chosen) & ---  &  $(1, 0) $      \\
     \hline 
$C:= \mcup_{v\in (A\infimal \partial f) (b)} \partial f^*( A^\top v) $ &
$\varnothing$ & $\partial f^*(A^\top 0)=\{ (p,q): p\ge e^q\}$  \\
     \hline 
$(C -x^\star)\mcap \kersf A $ &
$\varnothing$ & $[1,+\infty)\times \{ 0\}$  \\
 \hline
    \end{tabular}
   }
    \vskip 0.5em
\label{table_reg}
\vspace*{-0.3cm}
\end{table}

\section{Concluding remarks}
This work provided a fundamental understanding of the solution properties of regularized least-squares and its constrained version. Many geometric and analytical concepts have been connected, unified and further developed.

As mentioned in Sect. \ref{sec_eg_p1}, the geometric properties of lasso solutions deserve further investigations based on our Corollary \ref{c_lasso} and informative Example \ref{eg_lasso}. Its connections to existing works, e.g., \cite{lasso_reload,zh_lasso, gilbert,vaiter_geometry}, need to be established. Moreover, this work  can be easily extended to the case of $f(Dx)+\frac{1}{2}\|Ax-b\|^2$, where the function $f$ is composed with a linear (analysis) operator $D$. This helps to understand generalized lasso solutions. Finally, observing that the objective function of regularized least-squares is strongly convex with respect to the range component of $x$ (or projection of $x$ onto some transformed subspace), it would be of (theoretical) interest to perform dimension reduction and devise accelerated algorithms based on this observation.


\appendix
\section{Proof of Lemma \ref{l_recession}}
\label{app_1}
\begin{proof}
(i) First, take $v\in C_{\infty} \mcap D$, then $tv\in C_{\infty} \mcap D$, $\forall t>0$ (since $D$ is a linear subspace). Since $C\mcap D\ne \varnothing$, take $x\in C\cap D$, we have $x+tv\in C$ (since $v\in C_{\infty}$) and  $x+tv\in D$ (since $D $ is a subspace). 
This implies  $x+tv\in C\mcap D$, $\forall t>0$, and thus, $v\in (C\mcap D)_{\infty}$ by definition.

Conversely, take $v\in (C\mcap D)_{\infty}$ and $x\in C\mcap D$. Then $x+tv \in C\mcap D$, $\forall t>0$, by definition. It implies that $x+tv\in C$ and $x+tv \in D$. Thus, $v\in C_{\infty} \mcap D$, since $D$ is a linear subspace.

(ii) Since $C$ is closed and convex, $C_{\infty}(x)$ does not depend on specific point $x\in C$ \cite[Proposition III-2.2.1]{urruty}. By \cite[Theorem 8.1]{rtr_book}, the recession cone of $C$ can also be expressed as $C_{\infty} = \{d\in\R^n: C+d\subseteq C\}$.
Then, we deduce the following equivalence: $d\in C_{\infty} \Longleftrightarrow C+d \subseteq C \Longleftrightarrow (C-x_0) +d\subseteq C-x_0 \Longleftrightarrow d\in (C-x_0)_{\infty}$. 

(iii) $\cP_{D} (C)$ is defined as $\cP_{D} (C) = \big\{x\in \R^n: \exists v\in C, \textrm{\ s.t.\ } x = \cP_D v \big\}$. Take $x\in C\mcap D$, then $x = \cP_D x$ (since $x\in D$) for $x\in C$. Thus, $x\in \cP_D(C)$. This inclusion of (iii) is often proper. Consider a counter-example in $\R^2$: $C=[1,2]\times [1,2] \subset \R^2$ and $D=\R\times \{0\}$, which satisfies $C\mcap D=\varnothing\subset \cP_D (C) = [1,2] \times \{0\}$. 
\end{proof}

\section{Proof of Fact  \ref{f_infimal}}
\label{app_2}
\begin{proof}
(i): $f^*\in\Gamma_0(\R^n)$ is due to \cite[Corollary 13.38]{plc_book}. $f^*\circ A^\top$ is convex and closed by \cite[Proposition 9.5]{plc_book}, but not necessarily proper.   Considering $f(x_1,x_2) = x_1$ and $A=\begin{bmatrix}
0 & 1 \end{bmatrix}$ for example, we have $f^* = \iota_{ \{ (1,0) \} }$, and   $f^*\circ A^\top \equiv +\infty$, which is improper.

(ii) Clear. Furthermore, the viability condition of $\ransf \partial f\mcap \ransf A^\top \ne\varnothing$ is sufficient to ensure $f^*\circ A^\top \in\Gamma_0(\R^m)$, since $\domsf f^*\mcap \ransf A^\top \supseteq \domsf \partial f^*  \mcap \ransf A^\top =\ransf \partial f\mcap \ransf A^\top \ne\varnothing$ by   \cite[Corollary 16.49]{plc_book}.

(iii) The convexity of $A\triangleright f$ is due to \cite[Proposition 12.36-(ii)]{plc_book}. Regarding the properness,  the example in (i) leads to  $A\infimal f\equiv -\infty$, which is improper. To show that $A\infimal f$ is not necessarily closed, consider 
\be \label{y5}
 f:\R^2\mapsto \R:  (x_1, x_2) \mapsto   \begin{cases} 
  0, & \text{if } x_1 \geq 0, \, x_2 \geq 0, \, x_1x_2 \geq 1; \\
  +\infty, & \text{otherwise},
  \end{cases}, \quad A=\begin{bmatrix}
0 & 1 \end{bmatrix}.
\ee
We have $ A \triangleright f =\iota_{  (0,+\infty)   }$, which is not closed, since $  (A \triangleright f)(0) = +\infty$, but $\lim_{k \to \infty} (A \triangleright f)(1/k) = 0$.
Moreover, $A\infimal f$ is exact in $A(\domsf f) = (0,+\infty)$. This also shows that the exactness of $A\infimal f$ could not guarantee its closedness.

(iv) We first show the properness of $A\infimal f$, if $A\infimal f$ is exact. Since $f\in\Gamma_0(\R^n)$, \(\exists x_0 \in \domsf f \), such that \( f(x_0) < +\infty \). Let \( y_0 = Ax_0 \), then
$ (A \triangleright f)(y_0) \leq f(x_0) < +\infty$.
On the other hand, since \( f \) is proper, \( f(x) > -\infty \) for all \( x \). Thus, $  (A \triangleright f)(y) \geq \inf_{x} f(x) > -\infty$, \(\forall y \in \domsf (A\infimal f) \), 
which ensures that \( A \triangleright f \) never takes \(-\infty\). On the other hand, the example of \eqref{y5} shows that $A\infimal f$ may not be closed, even if this is exact.

(v) Recalling the example of \eqref{y5}, we have  
$\partial (A\infimal f) (y) = 
\begin{cases}
\{0\},       & \text{if } y > 0, \\
\varnothing,   & \text{if } y \leq 0.
\end{cases}$. Its closure function is  $\mathrm{cl} (A \triangleright f) =\iota_{ [0,+\infty)   }$, and the subdifferential at 0 is
$\partial(\mathrm{cl}(A\infimal f))(0) = \{0\}$. This shows that \( \partial (A\infimal f) \) is not maximally monotone, since  \( \partial (A\infimal f) \subset \partial(\mathrm{cl}(A\infimal f)) \), and the graph of the subdifferential of  \( \partial (A\infimal f) \) lacks \( (0, 0) \).

(vi) If $A\infimal f$ is closed, combining with (iv), it yields that $A\infimal f\in\Gamma_0(\R^m)$, then $\partial (A\infimal f)$ is maximally monotone by \cite[Theorem 20.25]{plc_book}. 
\end{proof}

\section{Proof of Fact  \ref{f_dom}}
\label{app_3}
\begin{proof}
(i) Given  $A\infimal f\in\Gamma_0(\R^m)$, based on \cite[Corollary 16.49]{plc_book}, we develop
\begin{eqnarray}
\domsf (A\infimal \partial f) &=& \ransf  (A \circ \partial f^* \circ A^\top) = A \big( \ransf (\partial f^* \circ A^\top) 
\big)  \quad \textrm{(by definition of $A\infimal \partial f$)}
\nonumber \\
& \subseteq & A ( \ransf \partial f^* ) = 
A ( \domsf \partial f)  
\quad \textrm{(by $\partial f^* = (\partial f)^{-1}$)}
\nonumber \\
&\subseteq & A(\domsf f).
\quad \textrm{(by \cite[Corollary 16.49]{plc_book})}
\nonumber
\end{eqnarray}

(ii) The first two inclusions are from \cite[Proposition 16.27, Corollary 16.49]{plc_book}, and the last equality is due to \cite[Proposition 12.36-(i)]{plc_book}.
\end{proof}

\bibliographystyle{siam}

\small{
\bibliography{refs}

\begin{thebibliography}{10}

\bibitem{ame_edge}
{\sc Dennis Amelunxen, Martin Lotz, Michael~B. McCoy, and Joel~A. Tropp}, {\em
  Living on the edge: phase transitions in convex programs with random data},
  Information and Inference: A Journal of the IMA, 3 (2014), pp.~224--294.

\bibitem{teboulle_book}
{\sc Alfred Auslender and Marc Teboulle}, {\em Asymptotic Cones and Functions
  in Optimization and Variational Inequalities}, Springer New York, NY, 2002.

\bibitem{vaiter_maximal}
{\sc A.~Barbara, A.~Jourani, and S.~Vaiter}, {\em Maximal solutions of sparse
  analysis regularization}, Journal of Optimization Theory and Applications,
  180 (2019), pp.~374--396.

\bibitem{plc_book}
{\sc Heinz~H. Bauschke and Patrick~L. Combettes}, {\em Convex Analysis and
  Monotone Operator Theory in Hilbert Spaces}, Second Edition, CMS Books in
  Mathematics, Springer, New York, NY, 2017.

\bibitem{beck_book}
{\sc Amir Beck}, {\em First-Order Methods in Optimization}, SIAM-Society for
  Industrial and Applied Mathematics, 2017.

\bibitem{lasso_reload}
{\sc A.~Berk, S.~Brugiapaglia, and T.~Hoseisel}, {\em Lasso reloaded: A
  variational analysis perspective with applications to compressed sensing},
  SIAM J. Math. Data Sci., 5 (2023), pp.~1102--1129.

\bibitem{shapiro_book}
{\sc J.~Fr\'{e}d\'{e}ric Bonnans and Alexander Shapiro}, {\em Perturbation
  Analysis of Optimization Problems}, Springer Series in Operations Research
  and Financial Engineering, Springer New York, NY, 2013.

\bibitem{bredies_ppa}
{\sc K.~Bredies, E.~Chenchene, D.A. Lorenz, and E.~Naldi}, {\em Degenerate
  preconditioned proximal point algorithms}, SIAM J. Optim., 32 (2022),
  pp.~2376--2401.

\bibitem{plc_warped}
{\sc Minh~N. B\`{u}i and Patrick~L. Combettes}, {\em Warped proximal iterations
  for monotone inclusions}, Journal of Mathematical Analysis and Applications,
  491 (2020), p.~124315.

\bibitem{venkat_geometry}
{\sc V.~Chandrasekaran, B.~Recht, P.~A. Parrilo, and A.~S. Willsky}, {\em The
  convex geometry of linear inverse problems}, Found. Comput. Math., 12 (2012),
  pp.~805--849.

\bibitem{plc_review_2024}
{\sc Patrick~L. Combettes}, {\em The geometry of monotone operator splitting
  methods}, Acta Numerica, 33 (2024), pp.~487--632.

\bibitem{condat_tour}
{\sc Laurent Condat, Daichi Kitahara, Andr\'{e}s Contreras, and Akira
  Hirabayashi}, {\em Proximal splitting algorithms for convex optimization: A
  tour of recent advances, with new twists}, SIAM Review, 65 (2023),
  pp.~375--435.

\bibitem{vaiter_geometry}
{\sc Xavier Dupuis and Samuel Vaiter}, {\em The geometry of sparse analysis
  regularization}, SIAM J. Optim., 33 (2023), pp.~842--867.

\bibitem{fadili_cone}
{\sc Jalal Fadili, Tran T.~A. Nghia, and Duy~Nhat Phan}, {\em Solution
  uniqueness of convex optimization problems via the radial cone}, Arxiv
  preprint: arXiv:2401.10346v1,  (2024).

\bibitem{fadili_iiima}
{\sc Jalal Fadili, Tran T.~A. Nghia, and Trinh T.~T. Tran}, {\em Sharp, strong
  and unique minimizers for low-complexity robust recovery}, Information and
  Inference: A Journal of the IMA, 12 (2023), pp.~1461--1513.

\bibitem{fadili_2013}
{\sc Jalal~M. Fadili, Gabriel Peyr\'{e}, Samuel Vaiter, Charles-Alban
  Deledalle, and Joseph Salmon}, {\em Stable recovery with analysis
  decomposable priors}, in Proc. SampTA, 2013, pp.~xx--xx.

\bibitem{gilbert}
{\sc J.C. Gilbert}, {\em On the solution uniqueness characterization in the
  {L}1 norm and polyhedral gauge recovery}, J. Optim. Theory Appl., 172 (2017),
  pp.~70--101.

\bibitem{glo_book}
{\sc R.~Glowinski, S.~Osher, and W.~Yin}, {\em Splitting Methods in
  Communication, Imaging, Science, and Engineering}, Springer Cham, 2016.

\bibitem{certificate}
{\sc Markus Grasmair, Otmar Scherzer, and Markus Haltmeier}, {\em Necessary and
  sufficient conditions for linear convergence of $\ell_1$-regularization},
  Communications on Pure and Applied Mathematics, 64 (2011), pp.~161--182.

\bibitem{he_lasso}
{\sc J.~He, C.~Kan, and W.~Song}, {\em On solution uniqueness and robust
  recovery for sparse regularization with a gauge: From dual point of view},
  Set-Valued Var. Anal., 33 (2025), pp.~xx--xx.

\bibitem{urruty}
{\sc Jean-Baptiste Hiriart-Urruty and Claude Lemar\'{e}chal}, {\em Convex
  analysis and minimization algorithms I: Fundamentals}, volume 305,
  Grundlehren Der Mathematischen, Springer-Verlag, 1993.

\bibitem{lewis}
{\sc A.~S. Lewis}, {\em Active sets, nonsmoothness, and sensitivity}, SIAM J.
  Optim., 13 (2003), pp.~702--725.

\bibitem{esaim}
{\sc Seyed~Ahmad Mousavi and Jing~Lai Shen}, {\em Solution uniqueness of convex
  piecewise - affine functions based optimization with applications to
  constrained $\ell_1$ minimization}, ESAIM: Control, Optimisation and Calculus
  of Variations, 25 (2019), pp.~56--82.

\bibitem{rtr_book}
{\sc R.~T. Rockafellar}, {\em Convex analysis}, Princeton Landmarks in
  Mathematics and Physics, Princeton University Press, 1996.

\bibitem{rtr_book_2}
{\sc R.~Tyrrell Rockafellar and Roger J-B Wets}, {\em Variational Analysis},
  Springer, Grundlehren der Mathematischen Wissenschaft, vol. 317, 2004.

\bibitem{ywt_book}
{\sc Ernest~K. Ryu and Wotao Yin}, {\em Large-Scale Convex Optimization:
  Algorithms \& Analyses via Monotone Operators}, Cambridge University Press,
  2022.

\bibitem{tib_2012}
{\sc Ryan~J. Tibshirani and Jonathan Taylor}, {\em Degrees of freedom in lasso
  problems}, Ann. Statist., 40 (2012), pp.~1198--1232.

\bibitem{vaiter_thesis}
{\sc Samuel Vaiter}, {\em Low complexity regularizations of inverse problems},
  Ph.D. Thesis, Universit\'{e} Paris--Dauphine,  (2014).

\bibitem{vaiter_iiima}
{\sc Samuel Vaiter, Mohammad Golbabaee, Jalal Fadili, and Gabriel Peyr\'{e}},
  {\em Model selection with low complexity priors}, Information and Inference:
  A Journal of the IMA, 4 (2015), pp.~230--287.

\bibitem{vaiter_low}
\leavevmode\vrule height 2pt depth -1.6pt width 23pt, {\em Model selection with
  low complexity priors}, Information and Inference: A Journal of the IMA, 4
  (2015), pp.~230--287.

\bibitem{vaiter_tit_2013}
{\sc S.~Vaiter, G.~Peyr\'{e}, C.~Dossal, and J.~Fadili}, {\em Robust sparse
  analysis regularization}, IEEE Trans. Inf. Theory, 59 (2013), pp.~2001--2016.

\bibitem{vaiter_tit}
{\sc Samuel Vaiter, Gabriel Peyr\'{e}, C.~Dossal, and Jalal Fadili}, {\em
  Robust sparse analysis regularization}, IEEE Transactions on Information
  Theory, 59 (2013), pp.~2001--2016.

\bibitem{vaiter_tit_2018}
{\sc S.~Vaiter, G.~Peyr\'{e}, and J.~Fadili}, {\em Model consistency of partly
  smooth regularizers}, IEEE Trans. Inf. Theory, 64 (2018), pp.~1725--1737.

\bibitem{zh_lasso}
{\sc Hui Zhang, Wotao Yin, and Lizhi Cheng}, {\em Necessary and sufficient
  conditions of solution uniqueness in 1-norm minimization}, Journal of
  Optimization Theory and Applications, 164 (2015), pp.~109--122.

\end{thebibliography}
}

\end{document}